\documentclass[a4paper,12pt,reqno]{amsart}
\usepackage{amssymb}
\usepackage{amsmath}
\usepackage{enumitem}
\usepackage{mathrsfs}
\usepackage[all]{xy}
\usepackage{tikz-cd}
\usepackage{mathtools}
\usepackage{hyperref}
\usepackage{url}
\usepackage{bbm}
\usetikzlibrary{arrows}
\usepackage[OT2,T1]{fontenc}
\usepackage[margin=1.2in]{geometry}

\numberwithin{equation}{section} \numberwithin{figure}{section}

\DeclareSymbolFont{cyrletters}{OT2}{wncyr}{m}{n}
\DeclareMathSymbol{\Sha}{\mathalpha}{cyrletters}{"58}

\newcommand{\N}{\mathbb{N}}\newcommand{\Z}{\mathbb{Z}}
\newcommand{\Q}{\mathbb{Q}}
\newcommand{\R}{\mathbb{R}}
\newcommand{\C}{\mathbb{C}}
\newcommand{\F}{\mathbb{F}}

\newcommand{\A}{\mathbb{A}}
\newcommand\GG{\mathbb{G}}

\newcommand\Gm{\GG_\mathrm{m}}
\renewcommand{\P}{\mathbb{P}}

\newcommand{\Adele}{\mathbf{A}}
\renewcommand{\O}{\mathcal{O}}
\newcommand{\dual}[1]{\widehat{#1}}

\DeclareMathOperator{\Hom}{Hom}
\DeclareMathOperator{\Aut}{Aut}

\DeclareMathOperator{\Pic}{Pic}

\DeclareMathOperator{\Gal}{Gal}

\DeclareMathOperator{\Br}{Br}
\DeclareMathOperator{\Bre}{Br^e}
\DeclareMathOperator{\Brun}{Br_{nr}}
\DeclareMathOperator{\HH}{H}

\DeclareMathOperator{\Norm}{\mathbf{N}}
\DeclareMathOperator{\Frob}{Frob}

\DeclareMathOperator{\sep}{sep}

\newcommand{\cycl}{\chi_{\mathrm{cycl}}}
\let\H\undefined
\DeclareMathOperator{\H}{H} 
\DeclareMathOperator{\inv}{inv} 
\let\L\undefined
\DeclareMathOperator{\L}{L}
\DeclareMathOperator{\re}{Re}
\DeclareMathOperator{\im}{Im}
\DeclareMathOperator{\ind}{ind}
\DeclareMathOperator{\rad}{rad}

\newcommand{\GL}{\mathrm{GL}}
\newcommand{\SL}{\mathrm{SL}}
\newcommand{\PSL}{\mathrm{PSL}}

\newtheorem{theorem}{Theorem}[section]

\newtheorem{corollary}[theorem]{Corollary}
\newtheorem{lemma}[theorem]{Lemma}

\theoremstyle{definition}
\newtheorem{definition}[theorem]{Definition}
\newtheorem{conjecture}[theorem]{Conjecture}
\newtheorem*{conjecture*}{Conjecture}
\newtheorem{example}[theorem]{Example}
\newtheorem{remark}[theorem]{Remark}
\newtheorem{question}[theorem]{Question}

\newcommand{\dan}[1]{{\color{blue} \sf $\clubsuit\clubsuit\clubsuit$ Dan: [#1]}}
\newcommand{\tim}[1]{{\color{magenta} \sf $\spadesuit\spadesuit\spadesuit$ Tim: [#1]}}

\begin{document}
	
	\title{The leading constant in Malle's conjecture}
	\author{Daniel Loughran}
	\address{
		Department of Mathematical Sciences \\
		University of Bath \\
		Claverton Down \
		Bath\\ 
		BA2 7AY\\
		UK.}
	\urladdr{https://sites.google.com/site/danielloughran}

	\author{Tim Santens}
	\address{
		University of Cambridge \\ 
		DPMMS \\
		Centre for Mathematical Sciences\\
		Wilberforce Road \\
		Cambridge \\
		CB3 0WB \\ UK}

\begin{abstract}
	We give an overview of a recent conjecture of the authors on the leading constant in Malle's conjecture on number fields of bounded discriminant. This comes from applying the philosophy from Manin's conjecture on rational points of bounded height on Fano varieties to classifying stacks. To make these ideas more accessible we assume no background in algebraic geometry, which requires some new perspectives and alternative approaches to the theory. We also give some new conjectures on multi-heights and Bhargava's heuristics on counting with local conditions imposed. 
\end{abstract}	

\maketitle

\tableofcontents

\section{Introduction}


\subsection{Malle's conjecture}
Let $k$ be a number field. In \cite{Mal02,Mal04}, Malle put forward the following.
\begin{conjecture*}[Malle]
Let $G \subseteq S_n$ be a non-trivial transitive subgroup. Then
$$\#\left\{ K/k :\begin{array}{l}
 [K:k] = n, \Gal(\widetilde{K}/k) \cong G, \\
 |\Norm_{k/\Q} \Delta_{K/k}| \leq B
 \end{array}
 \right\}\sim c_{\mathrm{Malle}}(k,G) B^{a(G)} (\log B)^{b(k,G)-1}$$
for some $c_{\mathrm{Malle}}(k,G) > 0, a(G) > 0, b(k,G) \in \N$.
\end{conjecture*}
In the conjecture $\Delta_{K/k}$ denotes the relative discriminant of $K/k$,  one counts isomorphism classes of fields $K/k$ and $\widetilde{K}$ denotes the Galois closure of $K$ with $\Gal(\widetilde{K}/k) \cong G$ being an isomorphism of permutation groups. Malle gave precise predictions for the invariants $a(G)$ and $b(k,G)$ (we recall these later).

This conjecture has received a significant amount of interest and we highlight some of the  more well-known results: the abelian case \cite{Wri89} and fields of small degree \cite{DH71,CDO02,Bha05,Bha10}.
Despite this, the conjecture has the following issues:
\begin{enumerate}
		\item The exponent $b(k,G)$ of $\log B$ predicted by Malle is wrong in general.
		\item Malle gave no prediction for the leading constant $c_{\mathrm{Malle}}(k,G)$.
		\item The conjecture orders by discriminant which can exhibit pathological properties, and it is convenient to have a framework for different orderings.
		\item For applications, there is a need for finer predictions which impose local conditions, e.g.~counting fields $K/k$ such that $2$ is completely split in $K$.
\end{enumerate}

In recent work \cite{LS24}, we use the theory of algebraic stacks to resolve  these issues and put the counting problem into a single unified framework. This led to precise conjectures for not just the leading constant in (2), but also for different orderings as in (3) and the more refined problem in (4) of counting with local conditions imposed. However certain parts of this work required background knowledge in algebraic stacks which has set a difficult entry bar to verify and understand a conjecture on an explicit counting problem in number theory.

The purpose of this paper is therefore to make the conjectures from \cite{LS24} more accessible assuming no background in algebraic geometry. It is not a survey in the truest sense, as we offer various new perspectives and approaches to the theory from \cite{LS24}, giving alternative proofs and constructions where possible avoiding the theory of stacks. Still: we use the word ``survey'' for want of a better word.


\subsection{Stacks and Malle's conjecture}
The recent progress has been made by viewing Malle's conjecture as analogous to \textit{Manin's conjecture} \cite{FMT89}. This conjecture concerns  rational points of bounded height on Fano varieties. Many researchers had noticed the formal similarities between these two conjectures, however it was Yasuda who was the first to attempt a serious comparison in an unpublished arxiv preprint \cite{Yas15}; Kedlaya \cite[\S10]{Ked07} also proposed that stacks may be useful to study Malle's conjecture.

The first comprehensive attempt at a unification is due to Ellenberg--Satriano--Zureick-Brown \cite{ESZB} who developed a theory of heights on stacks which allowed them to recover Malle's conjecture and Manin's conjecture as special cases. This was swiftly followed by Darda-Yasuda who developed a parallel theory \cite{DYBM} which has the advantage of giving predictions for the power of $\log B$. 

Thankfully, to study Malle's conjecture one does not need the full theory of stacks, but a rather special stack: $BG$, the classifying stack of $G$. To harness this analogy, we view the discriminant as a \emph{height function} on $BG$. A height function is a function which associates to an arithmetic object a positive real number, and is a way to measure the arithmetic complexity of the object. This gives the objects an ordering, with a natural problem to then count the number of objects of bounded height, provided this quantity is finite. 

We explain in more detail below how exactly stacks help, but this new stacky perspective has already led to new developments in Malle's conjecture. This includes a full proof of Malle's conjecture with the predicted leading constant over function fields with large field of constants  for tame groups \cite{San25} and predictions for wild groups \cite{DYBM2}. The paper \cite{LP25} proves the correct lower bound for counting $A_4$-quartic fields of bounded discriminant, by utilising the analogy between Malle's and Manin's conjecture, and transferring existing techniques from Manin's conjecture to Malle's conjecture \cite{FLS18}.  Forthcoming work of the authors with Paterson \cite{LPS26} also gives new predictions for the Cohen--Lenstra heuristics on distributions of (ray) class groups for arbitrary balanced height functions using stacks.

\subsection{Comparison with previous approaches}
We now explain previous approaches and how stacks allow one to resolve the issues highlighted above.  \smallskip

(1) Kl\"{u}ners \cite{Klu05} was the first to find a counter-example to Malle's conjecture for $G$ given by a suitable wreath product. Further counter-examples were found by various authors \cite{KP25,Wan24}. They all have the following similar flavour: There is a subfield $K$ of some cyclotomic extension of $k$, such that the collection of field extensions $k \subset K \subset L$ containing $K$ dominates the counting problem and provides a larger power of $\log B$. T\"{u}rkelli \cite{Tur15} put forward a suggested fix which takes into account the contributions from these cyclotomic subfields separately, however Wang \cite{Wan24} found a counter-example and modified the fix.

We build on this by giving precise predictions, including the leading constant, for the number of fields with a given Galois group which contain any given subfield. We view these additional subfamilies of number fields which need to be treated separately as \textit{accumulating thin subsets} of the classifying stack. To achieve this we work with a more general version of Malle's Conjecture where $G$ admits an action of the Galois group $\Gamma_k$. Here one counts $1$-cocycles with values in $G$, instead of number fields with Galois group $G$. This more general problem is sometimes referred to as the \textit{Twisted Malle's Conjecture} in the literature \cite{Alb21, OA21, AOWW24, AB26, DYTor, Cho25}.  \smallskip

(2) Alberts \cite{Alb23} was the first to consider possible formulae for the leading constant in Malle's conjecture. He did not put forward conjectures, but rather based his investigations on random group models for number fields.

In Manin's conjecture however there is already a prediction for the leading constant due to Peyre \cite{Pey95}  for anticanonical heights, and Batyrev--Tschinkel for more general heights \cite{BT95, BT98}. We take these works as our starting point and adapt to the setting of $BG$ to get a precise prediction, away from a thin set (Conjectures \ref{conj:balanced} and \ref{conj:non_balanced}).

The crucial new ingredient in our predictions is the Brauer group of the stack $BG$: this is required to account for local-to-global obstructions, such as the Grunwald--Wang theorem (see Example \ref{ex:Grunwald-Wang}). Our predictions actually agree with those of Alberts if the Brauer group is trivial. We introduce this Brauer group in a down-to-earth way via central extensions in \S \ref{sec:Brauer}. \smallskip

(3) There is a growing body of evidence that the discriminant is not the most natural way to order number fields for many questions of interest in arithmetic statistics. This was first observed by Wood \cite{Woo10} in the case of abelian extensions, where there can be pathological behaviour in the leading constant when imposing local conditions. Bartel and Lenstra \cite{BL20} also observed that the discriminant is badly behaved when studying distributions of class groups; in particular the Cohen--Lenstra heuristics do not hold in general when ordering by discriminant.

To clarify this we introduced the class of \emph{balanced height functions} in \cite{LS24}. Ordering by such a height function has all the nice properties one would like from an ordering; moreover computational evidence suggests that the discriminant is actually very rarely balanced. The best balanced height function is the \textit{radical discriminant}, namely the product of ramified primes. This may seem a bit naive, but it weights all number fields in the fairest way and gives the most even distribution. \smallskip

(4) In \cite[\S8]{Bha07} Bhargava put forward a heuristic to study the distribution of number fields with local conditions imposed. One hopes that the proportion of such fields changes by a simple ratio according to the imposed local conditions. However this fails in many cases, for example for $D_4$-quartic fields \cite{CDO02}.

To overcome this, we use the notion of equidistribution due to Peyre \cite[\S 3.3]{Pey95}.  It is here that the class of balanced heights returns as it is the class for which one should expect good properties. Provided one removes an accumulating thin set and takes into account additional complications coming from the Brauer group, we are able to extend Bhargava's heuristics to deal with general balanced height functions and put forward precise predictions (Conjectures \ref{conj:equi}, \ref{conj:restricted_ramification}, and \ref{conj:Brauer_spectral}).

\subsection{New contributions}
We now explain some of the new contributions in this survey. As already explained, we give new proofs and perspectives where possible avoiding the  theory of stacks, as well as new examples.

Aside from this, there are the following new contributions to illustrate some natural directions following on from \cite{LS24}.

Firstly, in \S \ref{sec:Bhargava} we extend Bhargava's heuristic on using Dirichlet series to give predictions for number field counts. This leads to Conjecture \ref{conj:Brauer_spectral}, which we call the \emph{Brauer spectral expansion}.

Secondly, in \S\ref{sec:multi_heights} we  analyse multi-heights, building on work of Gundlach \cite{Gun22} and Ellenberg--Venkatesh \cite[\S4.2]{EV05}, where we can also provide precise predictions for some of their heuristics.

\subsection{Prerequisites}
We will assume some familiarity with basic tools in algebraic number theory, including the adeles of a number field, Artin $\L$-functions and Galois cohomology. Specifically non-abelian Galois cohomology in degree $1$ \cite[\S I.5]{Ser02} and the Brauer group of a field \cite[\S1.3.3]{CT21} (but \textit{not} the Brauer group of a variety/stack). We also use some basic category theory.

No knowledge of algebraic stacks, nor even algebraic geometry, is required. The exception is \S\ref{sec:Manin} which is included for motivation only for the interested reader, as well as some related side remarks to motivate certain constructions and definitions. All results, terminology, and constructions in this survey will be explained in an explicit manner, with proofs sketched or deferred to \cite{LS24} or elsewhere as required.


\subsection{Structure of the paper} Since many of our choices are motivated by Manin's conjecture, we begin in \S \ref{sec:Manin} by giving a quick overview of Manin's conjecture for Fano varieties, and relevant generalizations. This is included for motivation only.

In \S \ref{sec:BG} we introduce the key object of study, namely the classifying stack $BG$. We are not assuming any knowledge of stacks, so work with it in a hands-on way via its functor of points.

In \S \ref{sec:heights} we define heights on $BG$, including the key class of \emph{balanced heights}. We then use these in \S \ref{sec:Tamagawa} to define Tamagawa measures on $BG$. These are measures on the adelic points of $BG$ and certain volumes with respect to these measures appear in the leading constant.

In \S \ref{sec:Brauer} we perform a detailed study of the Brauer group of $BG$. Again since we are not assuming any knowledge of stacks we introduce the Brauer group via central extensions of $G$. We work out the resulting theory in this language, including the Brauer--Manin obstruction, which appears in the leading constant.

We then present in \S \ref{sec:leading_constant} our conjectures on the leading constant in Malle's conjecture. We follow this up in \S \ref{sec:equi} with conjectures regarding the finer problem of counting number fields with local conditions imposed. This problem has been considered by Bhargava \cite{Bha07}, and we develop some of his heuristics  into a precise conjecture using what we call the \emph{Brauer spectral expansion} (Conjecture \ref{conj:Brauer_spectral}).

We finish in \S \ref{sec:multi_heights} with a discussion on multi-heights, where we provide precise predictions for heuristics of Gundlach \cite{Gun22} and Ellenberg--Venkatesh \cite[\S4.2]{EV05}.

\subsection{Terminology}  
For a field $k$ we denote by $k^{\sep}$ a choice of separable closure and $\Gamma_k :=\Gal(k^{\sep}/k)$. If $k$ is a global field and $v$ a place of $k$ then we denote by $k_v$ the completion of $k$ at $v$ and $q_v$ the cardinality of its residue field provided $v$ is non-archimedean. We fix an inclusion $k^{\sep} \subset k_v^{\sep}$ inducing a map $\Gamma_v \to \Gamma_k$
.

We say that a finite extension of $k_v$ is \emph{tamely ramified} if its ramification index is coprime to $q_v$. 

A \textit{groupoid} is a category in which every morphism is an isomorphism. An equivalence of groupoids is a functor which induces an equivalence of categories. We call a groupoid \emph{finite} if it has finitely many isomorphism classes of objects and the automorphism group of each object is finite. The correct way to count in a groupoid is via the groupoid cardinality: Let $X$ be a finite groupoid, denote by $[X]$ its set of isomorphism classes. Then the groupoid cardinality of $X$ is defined to be $\#X := \sum_{x \in [X]} 1/|\Aut x|$.

\subsection*{Acknowledgements}
We are very grateful to Julie Tavernier for numerous useful discussions, as well as help with the proofs in \S \ref{sec:Bhargava}. We also thank Brandon Alberts, David Roe, Robert Lemke Oliver, and Melanie Wood for useful comments.

\section{Manin's conjecture} \label{sec:Manin}
	We begin our survey  by giving a very quick overview of the key ideas behind Manin's conjecture \cite{FMT89} relevant to our setting, assuming some modest background in algebraic geometry. 
	
	\subsection{Heights}
	Let $X$ be a Fano variety over a number field $k$, i.e.~a smooth projective variety $X$ whose anticanonical line bundle $\omega_X^{-1}$ is ample. Such varieties are expected to have many rational points. Examples include projective space, but also smooth hypersurfaces in $\P^n$ of degree at most $n$. Given the expectation that such varieties have many rational points, a natural problem is to count them. To do so one needs a measure of the complexity of the rational point, which is provided by a \emph{height function}. We have the naive height on projective space:
	
	\begin{equation} \label{def:naive_height}
		\P^n(k) \mapsto \R_{>0}, \quad (x_0:\cdots:x_n) \mapsto \prod_v \max\{|x_0|_v, \dots, |x_n|_v\}.
	\end{equation}
	Each factor here should be interpreted as a kind of \emph{local height function}. In the special case $k = \Q$ one can choose a representative such that $x_i \in \Z$ and $\gcd(x_0,\dots,x_n) =1$, in which case the formula for the height is simply the familiar $\max\{|x_0|,\dots, |x_n|\}$.
	
	Therefore to each embedding $X \subset \P^n$ we obtain a different choice of height function. However embeddings into a projective space correspond to a very ample line bundle on $X$ together with a choice of generating global sections. This observation, which goes back to Weil, allows a theory of heights based around line bundles and is sometimes called \emph{Weil's height machine} (we emphasise that there are many choices of height associated to a given line bundle).
	
	Essential for counting problems is that the number of rational points of bounded height is \emph{finite}; this is called the Northcott property. The class of height functions with this property are exactly those associated to a so-called \emph{big line bundle} (at least for which the Northcott property holds on a Zariski open subset; the terminology \emph{big} comes from birational geometry).
	
	\subsection{Manin's conjecture}
	
	\begin{conjecture} \label{conj:Manin}
		Let $X$ be a Fano variety over a number field $k$ with $X(k) \neq \emptyset$ and $H$ a height function associated to a big line bundle $L$. Then there exists a thin subset $\Omega \subset X(k)$ such that 
		$$\#\{ x \in X(k) \setminus \Omega : H(x) \leq B\} \sim c(X,H) B^{a(L)} (\log B)^{b(L) - 1}$$
		for some $c(X,H) , a(L), b(L) > 0.$
	\end{conjecture}
	
	The factors $a(L)$ and $b(L)$ have precise predictions due to Manin and Batyrev--Manin \cite{BM90}; the factor $a(L)$ is nowadays called the \emph{Fujita invariant}, due to its importance in birational geometry. 
	
	\subsection{Anticanonical heights} \label{sec:anticanonical}
	Given the many different choices of height available, which height should one take? Well there is a natural class of height functions provided by the \textit{anticanonical height functions}; these are the height functions associated to the anticanonical line bundle. For anticanonical heights the power $b(L)$ of $\log B$ takes largest possible value, so in some sense such heights see the most rational points and treat each rational point equally. The most famous example, and indeed the original motivating example for Manin \cite{FMT89}, was smooth cubic surfaces such as the Fermat cubic
	$$x_0^3 + x_1^3 = x_2^3 + x_3^3 \quad \subset \P^3.$$
	Here a choice of anticanonical height is given by simply the naive height on projective space. The expected thin set to remove is the collection of rational points on the lines, e.g.~remove the ``trivial solutions'' with $x_0 = x_2, x_1 = x_3$.
	
	\subsection{Thin sets} \label{sec:thin_Manin}
	In the original formulation of Conjecture \ref{conj:Manin}, it was expected that only a Zariski closed subset of rational points need be removed. This was inspired by Manin's investigations into cubic surfaces, where he predicted that the lines should be removed to get the correct asymptotic \cite{FMT89}. However Batyrev and Tschinkel \cite{BT96} found counter-examples to this original prediction. Peyre \cite[\S8]{Pey03} suggested that one should remove a \emph{thin subset} of rational points instead; this is now the prevailing philosophy in Manin's conjecture. The definition of thin sets goes back to Serre \cite[\S3.1]{Ser08}, who introduced them to study the inverse Galois problem.
	
	\subsection{The leading constant} \label{sec:leading_constant_Manin}
	We now focus on the most interesting part for us, namely the leading constant $c(X,H)$. Peyre \cite{Pey95} was the first to formulate a conjecture for the leading constant in the case of anticanonical heights. It takes the shape 
	\begin{equation} \label{eqn:Peyre}
	c(X,H) = \alpha(L) \beta(X) \tau_H(X(\Adele_k)^{\Br}).
	\end{equation}
	Here $\alpha(L)$ is Peyre's effective cone constant, $\beta(X) = |\Br X/\Br k|$, $\tau_H$ is Peyre's Tamagawa measure, and $X(\Adele_k)^{\Br}$ the Brauer--Manin set of $X$ with respect to the Brauer group $\Br X$ of $X$. Note that in many texts one sees instead the factor $|\H^1(k,\Pic X_{\bar{k}})| = |\Br_1 X/\Br k|$ where $\Br_1 X$ denotes the \textit{algebraic Brauer group}; that one should take the full Brauer group in Manin's conjecture was first postulated in \cite{LS24}, using computational evidence from Malle's conjecture.  Further evidence for this change appears in forthcoming work of Chan--Koymans--Pagano--Rome--Santens \cite{CKPRS}, where an example of a singular almost Fano variety is given for which there are many more rational points of bounded anticanonical height than would be possible with the $\beta(X) = |\H^1(k,\Pic X_{\bar{k}})|$ version of Peyre's constant.
	
	 Also some texts take instead the measure of the closure of $X(k)$ in $X(\Adele_k)$; we take  the measure of the Brauer--Manin set since it is easier to calculate in practice. More than this, one typically actually proves the formula \eqref{eqn:Peyre} then it is a separate step to show that $X(\Adele_k)^{\Br}$ equals the closure of $X(k)$.
	
	For non-anticanonical heights more care needs to be taken. Firstly, one needs a notion of ``good'' heights. These are now called heights associated to \emph{adjoint rigid} line bundles, with the terminology again coming from birational geometry (this perspective was first put forward by Batyrev-Tschinkel \cite{BT98}). An adjoint rigid line bundle is one for which the \emph{adjoint divisor class} $D:= [\omega_X] + a(L)[L]$ contains a unique effective divisor. For such heights one considers the complement $U:=X \setminus D$ of the adjoint divisor. Then one defines variants $\alpha_U(L)$ and $\tau_{H,U}$ of Peyre's effective cone constant and Tamagawa measure, and the leading constant is given by 
	\begin{equation} \label{eqn:adjoint_rigid}
	c(X,H) = \alpha_U(L) \beta(U) \tau_{H,U}(U(\Adele_k)^{\Br})
	\end{equation}
	where $\beta(U) = |\Br U/\Br k|$ and $U(\Adele_k)^{\Br}$ is the Brauer--Manin set with respect to  $\Br U$. Note in particular that both the Brauer group and topological space $U(\Adele_k)$ depend on the choice of line bundle in the adjoint rigid case. 
	
	Finally, if $L$ is not adjoint rigid, then one reduces to the adjoint rigid case as follows. Associated to $L$ is the so-called \emph{Iitaka fibration} $\phi_L: X \dashrightarrow Y$ to some variety $Y$. The key observation is that the restriction of $L$ to the fibres of $\phi_L$ is now adjoint rigid. Hence we have a precise prediction for the number of points in each fibre. The conjecture is then that the sum over the contributions from all fibres is now convergent. This leads to the prediction
	\begin{equation} \label{eqn:Iitaka_fibration}
	c(X,H) = \sum_{\substack{y \in Y(k) \setminus \phi_L(\Omega)}} c(\phi_L^{-1}(y),H)
	\end{equation}
	where $c(\phi_L^{-1}(y),H)$ is as in \eqref{eqn:adjoint_rigid}.

\section{The stack $BG$} \label{sec:BG}
In this section we introduce the stack $BG$. We begin with some facts on Galois cohomology.
\subsection{$\Gamma_k$-groups and their cohomology}
For this section we will fix a field $k$ of characteristic $p \geq 0$.
It turns out that to get a well-behaved theory in Malle's conjecture one needs to consider groups with a possible Galois action.

\begin{definition}
	Let $k$ be a field. A \emph{$\Gamma_k$-group} $G$ is a group equipped with a (left) $\Gamma_k$-action $\Gamma_k \times G \mapsto G: (\sigma, g) \mapsto \sigma(g)$. It will be called finite if the underlying group is finite.
	
	A finite $\Gamma_k$-group is \emph{tame} if $|G|$ is coprime to the characteristic $p$. Otherwise we call it \emph{wild}.
\end{definition}
\begin{example} \hfill
	\begin{enumerate}
		\item Any abstract group equipped with the trivial Galois action is a $\Gamma_k$-group.
		\item For all $n$ we have the $\Gamma_k$-group $\mu_n := \{\zeta \in k^{\sep}: \zeta^n = 1\}$ of $n$-th roots of unity.
		\item A Galois module is by definition a $\Gamma_k$-group whose underlying group is commutative.
		\item To any $\Gamma_k$-group $G$ we have its dual $\dual{G}:= \Hom(G, k^{\sep,\times})$ given by its group
		of $1$-dimensional characters. This is also a $\Gamma_k$-group and we have $\dual{\dual{G}} = G$ provided
		$G$ is finite abelian.
		\item Let $K/k$ be a finite separable field extension and $G$ a $\Gamma_K$-group. 
		We define the \emph{Weil restriction}\footnote{This is also known as the induction $\mathrm{Ind}_{\Gamma_K}^{\Gamma_k} G$, see \cite[\S2.1.2]{Stix10}} $\mathrm{R}_{K/k} G$ as follows.
		
		As abstract group we consider the group of $\Gamma_K$-equivariant continuous maps $\mathrm{R}_{K/k} G := \Hom_{\Gamma_K}(\Gamma_k, G)$, where $\Gamma_K$ acts on $\Gamma_k$ by left multiplication. The group structure is given by multiplication on the image. Note that by choosing coset representatives of $\Gamma_k/\Gamma_K$ we get an isomorphism $\mathrm{R}_{K/k} G \cong G^{[K:k]}$ of abstract groups.
		
		The $\Gamma_k$-action is given by the law
		\[
		\Gamma_k \times \mathrm{R}_{K/k} G: (\sigma, f) \mapsto \sigma(f) := \left(\tau \mapsto f(\tau \cdot \sigma)\right).
		\]
		We note that is well-defined since for all $\sigma, \tau \in \Gamma_k$ and $\rho \in \Gamma_K$ we have $f(\rho \cdot \tau \cdot \sigma) = \rho(f(\tau \cdot \sigma))$ as $f$ is $\Gamma_K$-equivariant.
	\end{enumerate}
\end{example}
\begin{remark}
	Given a finite \'etale group scheme $G$ over $k$ we get a corresponding finite $\Gamma_k$-group $G(k^{\sep})$. Moreover, this construction defines an equivalence of categories between finite \'etale group schemes over $k$ and finite $\Gamma_k$-groups. In this paper we will use the language of finite $\Gamma_k$-groups, but note that references may use the equivalent notion of finite \'etale group schemes (as in \cite{LS24}).
\end{remark}


\begin{definition} \label{def:cocycle}
	A $1$-cocycle with values in $G$ is a continuous function $\varphi: \Gamma_k \to G$ such that for all $\sigma, \tau \in \Gamma_k$ we have $\varphi(\sigma \tau) = \varphi(\sigma) \sigma(\varphi(\tau))$. We denote the set of $1$-cocycles by $Z^1(\Gamma_k, G)$.
	
	There is a natural right action of $G$ on $Z^1(\Gamma_k, G)$ given by the conjugation action 
	\[
 	Z^1(\Gamma_k, G) \times G \mapsto Z^1(\Gamma_k, G): (\varphi, g) \mapsto \varphi^g :=  (\sigma \mapsto g^{-1} \varphi(\sigma) \sigma(g)).
	\]
	The non-abelian cohomology set $\H^1(k,G) := Z^1(\Gamma_k, G)/G$ is defined as the quotient for this action. This is an abelian group if $G$ is abelian.
\end{definition}

\begin{example}
	Let $G$ be a finite group with trivial $\Gamma_k$-action. Then $Z^1(\Gamma_k, G) = \Hom(\Gamma_k,G)$ is simply the collection of \textit{continuous} homomorphisms from $\Gamma_k$ to $G$. In this case, the conjugacy action is the natural conjugation action induced by $G$ on $\Hom(\Gamma_k,G)$
\end{example}

\begin{example}\label{ex:Weil_restriction}
	Let $K/k$ be a finite extension and $G$ a $\Gamma_K$-group, there is a natural map $Z^1(k, R_{K/k} G) \mapsto Z^1(K, G)$ given by evaluating at $1$, i.e.~the formula 
	\[
	\varphi \mapsto (\sigma \mapsto (\varphi(\sigma))(1)).
	\]
	We note that this is well-defined as for all $\sigma, \tau \in \Gamma_K$ we have by the definition of $R_{K/k} G$ that 
	\[
	(\varphi(\sigma \tau))(1) = \left(\varphi(\sigma) \cdot \sigma(\varphi(\tau)) \right)(1) = (\varphi(\sigma))(1) \cdot (\varphi(\tau))(\sigma) = (\varphi(\sigma))(1) \cdot \sigma((\varphi(\tau))(1))
	\]
	where the last equality is by the $\Gamma_K$-equivariance of $\varphi(\tau)$.
	
	This map is surjective and even induces a bijection $\H^1(k, R_{K/k} G) \cong \H^1(K, G)$, see Lemma \ref{lem:Weil_restriction}.
\end{example}

\begin{example} \label{ex:inner_twist}
	Let $N \subset G$ be a normal subgroup scheme and $\varphi$ a $1$-cocycle with values in $G$. Then we define
	the \emph{inner twist $N_{\varphi}$ of $N$ by $\varphi$} as follows. As a group we take $N_\varphi$ to be simply
	$N$. We use the Galois action coming from twisting by conjugation by $\varphi$. This is:
	\[
\Gamma_k \times N \mapsto N: (\sigma, n) \mapsto \sigma \cdot n := \varphi(\sigma) \sigma(n) \varphi(\sigma)^{-1}.
\]
\end{example}

If $N = G$ then inner twisting does not change the set of cocycles.
\begin{lemma}\label{lem:inner_twist_cocycles}
	Let $G$ be a $\Gamma_k$-group and $\varphi \in Z^{1}(k, G)$. The following map is well-defined, $G$-equivariant and a bijection
	\[
	Z^1(k, G_{\varphi}) \mapsto Z^1(k, G): \psi \mapsto \psi \cdot \varphi := (\sigma \mapsto \psi(\sigma) \varphi(\sigma)).
	\]
\end{lemma}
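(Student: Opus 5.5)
The plan is to verify the three claims directly from Definition \ref{def:cocycle} and the twisted action of Example \ref{ex:inner_twist}, taking $N = G$. Everything reduces to short algebraic identities, and the key identity is the same one each time: $\sigma\cdot x = \varphi(\sigma)\sigma(x)\varphi(\sigma)^{-1}$, so that
\[
(\sigma\cdot x)\,\varphi(\sigma) = \varphi(\sigma)\,\sigma(x).
\]

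\emph{Well-definedness.} Let $\psi \in Z^1(k, G_\varphi)$. The map $\sigma \mapsto \psi(\sigma)\varphi(\sigma)$ is continuous, since it is a product of continuous maps into the discrete group $G$. For the cocycle condition, we compute
\[
\psi(\sigma\tau)\varphi(\sigma\tau) = \psi(\sigma)\,\bigl(\sigma\cdot\psi(\tau)\bigr)\,\varphi(\sigma)\,\sigma(\varphi(\tau)).
\]
By the identity above, the middle factor $\bigl(\sigma\cdot\psi(\tau)\bigr)\varphi(\sigma)$ equals $\varphi(\sigma)\sigma(\psi(\tau))$. The whole expression therefore becomes
\[
\psi(\sigma)\varphi(\sigma)\,\sigma\bigl(\psi(\tau)\varphi(\tau)\bigr),
\]
which is exactly the cocycle condition for $\psi\cdot\varphi$ in $Z^1(k,G)$.

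\emph{Bijectivity.} I will exhibit the inverse $\chi \mapsto \chi\varphi^{-1}$, meaning $\sigma \mapsto \chi(\sigma)\varphi(\sigma)^{-1}$. I need to check that this lands in $Z^1(k,G_\varphi)$, i.e.
\[
\chi(\sigma\tau)\varphi(\sigma\tau)^{-1} = \chi(\sigma)\varphi(\sigma)^{-1}\cdot \varphi(\sigma)\,\sigma\bigl(\chi(\tau)\varphi(\tau)^{-1}\bigr)\,\varphi(\sigma)^{-1}.
\]
After cancelling $\varphi(\sigma)^{-1}\varphi(\sigma)$, this follows by expanding both cocycle conditions. The two maps are then clearly mutually inverse. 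Alternatively, injectivity is obvious because we multiply by the fixed function $\varphi$, and surjectivity follows from the same calculation read backwards.

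\emph{$G$-equivariance.} Here the $G$-action on $Z^1(k,G_\varphi)$ uses the twisted Galois action:
\[
\psi^g(\sigma) = g^{-1}\psi(\sigma)\,(\sigma\cdot g) = g^{-1}\psi(\sigma)\varphi(\sigma)\sigma(g)\varphi(\sigma)^{-1}.
\]
Multiplying on the right by $\varphi(\sigma)$ gives $g^{-1}\psi(\sigma)\varphi(\sigma)\sigma(g)$, which is $(\psi\cdot\varphi)^g(\sigma)$.

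The only mildly delicate step is keeping track of which Galois action (twisted or untwisted) appears where; there is no genuine obstacle.
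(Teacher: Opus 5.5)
Your proof is correct and follows the paper's argument essentially step for step. The paper uses the same direct cocycle computation for well-definedness, the same one-line check of $G$-equivariance, and the same inverse $\chi \mapsto \chi\varphi^{-1}$; the only cosmetic difference is that the paper notes $\sigma \mapsto \varphi(\sigma)^{-1}$ is itself a cocycle in $Z^1(k,G_\varphi)$, so the inverse's well-definedness comes from the first part, whereas you verify the cocycle condition for $\chi\varphi^{-1}$ directly.
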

\begin{proof}
	That is it is well-defined is the following direct cocycle computation
	\[
	\begin{split}
		(\psi \cdot \varphi)(\sigma \tau) &= \psi(\sigma \tau) \varphi(\sigma \tau) = \psi(\sigma) \varphi(\sigma) \sigma(\psi(\tau)) \varphi(\sigma)^{-1} \varphi(\sigma)  \sigma(\varphi(\tau)) \\ &=  (\psi \cdot \varphi)(\sigma) \sigma((\psi \cdot \varphi)(\tau)).
	\end{split}
	\]
	The $G$-equivariance follows from the following direct cocycle computation
	\[
	(\psi^{g} \cdot \varphi)(\sigma) = g^{-1} \psi(\sigma) \varphi(\sigma) \sigma(g) \varphi(\sigma)^{-1} \varphi(\sigma) = (\psi \cdot \varphi)^{g}(\sigma).
	\]
	It is a bijection as the inverse is given by $\psi' \mapsto\psi' \cdot \varphi^{-1}$ where $\varphi^{-1} \in Z^1(k, G_{\varphi})$ is defined as $\varphi^{-1}(\sigma) = \varphi(\sigma)^{-1}$.
\end{proof}

Inner twists arise naturally via the following fundamental construction, which goes back to 	Serre \cite[\S5.5]{Ser02}.

\begin{lemma}
Let 
	$\varphi \in Z^1(k,G)$. Then the collection of cocycles of $G$ with the same image as $\varphi$
	via $q:G \to G/N$ is naturally parametrised by $Z^1(k,N_\varphi)$. More specifically, the map
	$$Z^1(k,N_\varphi) \to \{f \in Z^1(k,G): q \circ \varphi = q \circ f\},
	\quad \psi \mapsto \psi \cdot \varphi.
	$$
	is a bijection.
\end{lemma}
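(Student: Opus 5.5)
The plan is to deduce this from Lemma \ref{lem:inner_twist_cocycles} by restricting that bijection to the subgroup $N_\varphi \subseteq G_\varphi$. Since $N$ is normal and $\Gamma_k$-stable, the twisted action $\sigma \cdot n = \varphi(\sigma)\sigma(n)\varphi(\sigma)^{-1}$ preserves $N$. Hence $N_\varphi$ is a $\Gamma_k$-subgroup of $G_\varphi$, and $Z^1(k,N_\varphi)$ is exactly the set of $\psi \in Z^1(k,G_\varphi)$ taking values in $N$. Lemma \ref{lem:inner_twist_cocycles} already tells us that $\psi \mapsto \psi\cdot\varphi$ is an injective map $Z^1(k,G_\varphi) \to Z^1(k,G)$ landing in cocycles. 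It therefore suffices to identify the image of $Z^1(k,N_\varphi)$ with the fibre $\{f : q\circ f = q\circ\varphi\}$.

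\textbf{The image lies in the fibre.} If $\psi$ takes values in $N$, then for every $\sigma$ we have
\[
q(\psi(\sigma)\varphi(\sigma)) = q(\psi(\sigma))\,q(\varphi(\sigma)) = q(\varphi(\sigma)),
\]
since $q$ is a homomorphism killing $N$.

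\textbf{Every element of the fibre is hit.} Given $f \in Z^1(k,G)$ with $q\circ f = q\circ\varphi$, set $\psi(\sigma) := f(\sigma)\varphi(\sigma)^{-1}$. This takes values in $N$ because $q(\psi(\sigma)) = 1$, and clearly $\psi\cdot\varphi = f$. It remains to check that $\psi$ is a cocycle for the twisted action on $N_\varphi$. Here I would either invoke surjectivity in Lemma \ref{lem:inner_twist_cocycles}, whose inverse is $\psi' \mapsto \psi'\cdot\varphi^{-1}$, which gives $\psi \in Z^1(k,G_\varphi)$ directly and then $\psi \in Z^1(k,N_\varphi)$ since it is $N$-valued. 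Alternatively I would verify it directly:
\[
\psi(\sigma\tau) = f(\sigma)\sigma(f(\tau))\sigma(\varphi(\tau))^{-1}\varphi(\sigma)^{-1},
\]
which equals $\psi(\sigma)\,\varphi(\sigma)\sigma(\psi(\tau))\varphi(\sigma)^{-1}$ after inserting $\varphi(\sigma)^{-1}\varphi(\sigma)$. Continuity of $\psi$ is automatic, since it is a product of continuous maps into a discrete group.

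\textbf{Main obstacle.} The only delicate point is bookkeeping: checking that the twisted Galois action makes $N_\varphi$ a genuine $\Gamma_k$-group. This uses the cocycle identity for $\varphi$ to get $(\sigma\tau)\cdot n = \sigma\cdot(\tau\cdot n)$. I would expect it to follow from the corresponding fact for $G_\varphi$ implicit in Lemma \ref{lem:inner_twist_cocycles}, together with normality of $N$. Injectivity needs no separate argument: it is immediate from right-multiplying by $\varphi(\sigma)^{-1}$.
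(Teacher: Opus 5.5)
Your proposal is correct and takes essentially the same approach as the paper. The paper also uses Lemma \ref{lem:inner_twist_cocycles} to pass to $G_\varphi$, where $\varphi$ becomes the trivial cocycle, and then calls the identification of $N$-valued cocycles with those having trivial image in $G/N$ ``obvious''; you simply write out that identification, including the cocycle check for $\sigma \mapsto f(\sigma)\varphi(\sigma)^{-1}$, explicitly.
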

\begin{proof}
	We may replace $G$ by $G_{\varphi}$ using Lemma \ref{lem:inner_twist_cocycles}. In this case the claim is that $Z^1(k, N_{\varphi}) = \{ f \in Z^1(k, G_{\varphi}): q \circ e_{BG_{\varphi}} = q \circ f\}$. This is obvious.
\end{proof}

\subsubsection{Unramified cohomology}
Assume now that $k$ is a number field.

\begin{definition} \label{def:unramified}
	We call a $\Gamma_k$-group $G$ \textit{unramified} at a place $v$ if the inertia group $I_v \subset \Gamma_{k_v}$ acts trivially on $G$.
	
	Let $v$ be a place at which $G$ is unramified. We call a cocycle $\varphi \in Z^1(k_v,G)$ \textit{unramified} if it factors through $\Gal(k_v^{\textrm{nr}}/k_v) \cong \Gamma_{k_v}/I_v$, where $k_v^{\textrm{nr}}$ is the maximal unramified extension of $k_v$. 
	
	We denote by $Z^1_{\textrm{nr}}(k_v,G) \subset Z^1(k_v, G)$ the set of unramified cocycles.
	We let $\H^1_{\textrm{nr}}(k_v, G):=Z^1_{\textrm{nr}}(k_v,G)/G$ be
	the associated set of unramified cohomology classes.

	A place $v$ is called \emph{good} (with respect to $G$) if it is non-archimedean, $G$ is unramified at $v$ and $G$ is tame at $v$, i.e.~$\gcd(|G|,q_v)=1$. A \emph{bad place} is a non-good place.
\end{definition}

The unramified Galois group $\Gal(k_v^{\mathrm{nr}}/k_v)$ is isomorphic to the free profinite group $\hat{\Z}$. This leads to the following explicit description.

\begin{lemma} \label{lem:unramified_cocycle}
	Let $\sigma \in \Gal(k_v^{\mathrm{nr}}/k_v)$ be a topological generator. If $G$ is a finite unramified $\Gamma_k$-group then the map 
	\[
	Z^1_{\mathrm{nr}}(k_v,G) \to G: \varphi \to \varphi(\sigma)
	\]
	is a bijection.
\end{lemma}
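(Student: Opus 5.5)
Since $G$ is unramified at $v$, the action of $\Gamma_{k_v}$ on $G$ factors through $\Gamma_{k_v}/I_v \cong \Gal(k_v^{\mathrm{nr}}/k_v) \cong \hat{\Z}$. So $Z^1_{\mathrm{nr}}(k_v,G)$ is naturally the set of continuous $1$-cocycles $\Gal(k_v^{\mathrm{nr}}/k_v) \to G$; inflation identifies these with the cocycles on $\Gamma_{k_v}$ that factor through the quotient. The plan is to show that a continuous cocycle on the procyclic group $\hat{\Z}$ with values in a finite discrete group is determined by its value $g=\varphi(\sigma)$ on a topological generator, and that every $g \in G$ occurs.

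\emph{Injectivity.} The cocycle identity gives by induction
\[
\varphi(\sigma^n) = g\,\sigma(g)\,\sigma^2(g)\cdots\sigma^{n-1}(g), \qquad n \geq 1.
\]
So $\varphi$ is determined on the dense subgroup generated by $\sigma$; negative powers are handled through $\varphi(1)=1$ and $\varphi(\sigma^{-1}) = \sigma^{-1}(\varphi(\sigma))^{-1}$. By continuity, $\varphi$ is then determined everywhere, since $G$ is discrete and so $\varphi$ is locally constant.

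\emph{Surjectivity.} This is the main step. Given $g \in G$, I would pass to the semidirect product $G \rtimes \hat{\Z}$, where $\hat{\Z}$ acts through $\Gal(k_v^{\mathrm{nr}}/k_v)$. Cocycles $\hat{\Z}\to G$ correspond to continuous homomorphic sections $\tau \mapsto (\varphi(\tau),\tau)$ of the projection $G \rtimes \hat{\Z} \to \hat{\Z}$.

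The action of $\hat{\Z}$ on the finite group $G$ factors through some finite quotient $\Z/m$. Hence $G\rtimes\hat{\Z}$ is profinite, being an inverse limit of the finite groups $G \rtimes \Z/mN$. The element $(g,\sigma)$ therefore generates a procyclic closed subgroup, and by the universal property of $\hat{\Z}$ as the free profinite group on one generator there is a unique continuous homomorphism $s:\hat{\Z} \to G\rtimes \hat{\Z}$ with $s(\sigma) = (g,\sigma)$.

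Composing $s$ with the projection gives a continuous endomorphism of $\hat{\Z}$ fixing $\sigma$, which must be the identity by density. So $s$ is a section, and the resulting $\varphi$ is a continuous cocycle with $\varphi(\sigma)=g$.

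Alternatively, one can avoid semidirect products by working directly with the formula above. Let $h = (g,\sigma)^{m}\in G\times\{\sigma^m\}$ in the finite group $G\rtimes \Z/m$, and let $r$ be its order. Then the products $\varphi(\sigma^n)$ depend only on $n \bmod mr$, so they extend continuously from $\Z$ to $\hat{\Z} = \varprojlim \Z/N$. The cocycle identity holds on the dense subset $\Z$ and hence everywhere. The only real care needed is this periodicity and continuity check.
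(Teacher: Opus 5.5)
Your proof is correct. Injectivity is argued exactly as in the paper: the cocycle law determines $\varphi$ on the powers of $\sigma$, and these are dense. For surjectivity your main argument takes a different route.

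The paper works by hand. It forms the products $N_\ell(g)=g\,\sigma(g)\cdots\sigma^{\ell-1}(g)$ and uses finiteness of $G$ to find $n<m$ with $N_n(g)=N_m(g)$. From the law $N_{k+\ell}(g)=N_k(g)\,\sigma^k(N_\ell(g))$ it deduces $N_{m-n}(g)=1$, and then defines $\varphi_g$ by factoring through $\Z/(m-n)\Z$.

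You instead read cocycles as continuous homomorphic sections of $G\rtimes\hat{\Z}\to\hat{\Z}$. You obtain the section from the universal property of $\hat{\Z}$ as the free profinite group on one generator. This is legitimate because a topological generator of $\hat{\Z}$ is a unit, so $\hat{\Z}$ is free on $\sigma$ and not merely on $1$.

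Your route is more conceptual and carries over verbatim to cocycles on any free profinite group. The cost is that it relies on the profinite universal property and the semidirect-product dictionary. The paper's route is elementary and uses only finiteness of $G$.

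Your ``alternative'' argument is essentially the paper's, with one small advantage. Your period $mr$ is a multiple of the period $m$ of the Galois action, so the cocycle identity after factoring through $\Z/mr\Z$ is immediate. The paper's period $m-n$ need not be such a multiple. Its terse ``well-defined and a cocycle'' therefore implicitly uses that $\sigma^{m-n}$ fixes every $N_\ell(g)$. This does hold: expanding $N_{\ell+(m-n)}(g)$ via the cocycle-type law in the two possible orders gives $N_\ell(g)=\sigma^{m-n}(N_\ell(g))$.
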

\begin{proof}
	We may identify $\Gal(k_v^{\mathrm{nr}}/k_v) \cong \hat{\Z}$ and $\sigma = 1 \in \hat{\Z}$.
	
	
	If $\varphi \in Z^1_{\mathrm{nr}}(k_v,G)$ then for all $k \in \Z_{> 0}$ we have $\varphi(\sigma^k) = \varphi(\sigma) \cdot \sigma(\varphi(\sigma)) \cdots \sigma^{k - 1}(\varphi(\sigma))$ by the cocycle law. The value of $\varphi(\sigma^k)$ is thus determined by $\varphi(\sigma)$. But the subset $\{\sigma^k : k \in \Z_{> 0} \} \subset \Gal(k_v^{\mathrm{nr}}/k_v) \cong \hat{\Z}$ is dense as $\sigma$ is a topological generator, so $\varphi(\sigma)$ determines all values of $\varphi$, this shows injectivity.
	
	Let now $g \in G$. For all $k \in \Z_{> 0}$ we consider $N_k(g) := g \cdot \sigma(g) \cdots \sigma^{k-1}(g) \in G$. This satisfies the following cocycle law for all $k, \ell \in \Z_{> 0}$.
	\begin{equation}\label{eq:cocycle_type_law}
		N_{k + \ell}(g) = N_k(g) \sigma^k(N_{\ell}(g)).
	\end{equation}
	By finiteness of $G$ there exists $n < m \in \Z_{> 0}$ such that $N_n(g) = N_m(g)$. By \eqref{eq:cocycle_type_law} we thus have $\sigma^n(N_{m - n}(g)) = N_n(g)^{-1} N_m(g) = 1$. We then define a cocycle $\varphi_g$ by the formula
	\[
	\varphi_g: \hat{\Z} \twoheadrightarrow  \Z/(m - n) \Z \to G: \ell \to N_\ell(g).
	\]
	This is well-defined and a cocycle by \eqref{eq:cocycle_type_law}. We note that $\varphi_g(\sigma) = g$ by construction. This implies surjectivity as $g$ was arbitrary.
\end{proof}

\subsection{The stack $BG$}

We define $BG$ via its functor of points over any field $k$.

\begin{definition} \label{def:BG}
	We define $BG(k)$ to be the groupoid whose objects are cocycles with values in $G$,
	and whose morphisms are given by conjugation, as in Definition~\ref{def:cocycle}.
	
	We denote by $BG[k]$ the set of isomorphism classes of objects in $BG(k)$. By construction we have
	$BG[k] = \H^1(k,G)$.
\end{definition}

\begin{example}
If $G$ has trivial Galois action then, by Galois theory, an element of $BG(k)$ corresponds to a Galois extension $L/k$ together with a choice of embedding $\Gal(L/k) \to G$. It is useful for the theory to allow non-surjective homomorphisms. For example if  $v$ is a place of a number field $k$, then the image of a surjective element of $BG(k)$ under the map $BG(k) \to BG(k_v)$ will typically not be surjective.
\end{example}

\begin{example}
	$B \Z/2\Z(k)$ has two types of elements. The first correspond to separable quadratic extensions of $k$. The second is the trivial homomorphism $\Gamma_k \to 0$, which one should view as corresponding to the trivial quadratic algebra $k \times k$.
\end{example}

An important corollary of Lemma \ref{lem:inner_twist_cocycles} is that $BG(k)$ is invariant under inner twists of $G$.
\begin{corollary}\label{cor:inner_twists}
	Let $G$ be a $\Gamma_k$-group and $\varphi \in Z^{1}(k, G)$. Then the formula $\psi \to \psi \cdot \varphi$ induces an equivalence of groupoids $BG_{\varphi}(k) \cong BG(k)$.
\end{corollary}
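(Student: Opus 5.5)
To show that $\psi \mapsto \psi \cdot \varphi$ gives an equivalence of groupoids $BG_{\varphi}(k) \to BG(k)$, I would first make it a functor, then check it is bijective on objects and fully faithful. All the needed input is in Lemma \ref{lem:inner_twist_cocycles}.

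\textbf{Objects.} By Definition \ref{def:BG}, the objects of $BG_\varphi(k)$ are the cocycles $Z^1(k,G_\varphi)$, and those of $BG(k)$ are $Z^1(k,G)$. Lemma \ref{lem:inner_twist_cocycles} says the assignment $\psi \mapsto \psi\cdot\varphi$ is a well-defined bijection between these sets.

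\textbf{Morphisms.} A morphism $\psi \to \psi'$ in either groupoid is an element $g \in G$ with $\psi^g = \psi'$. Here one uses that $G_\varphi = G$ as abstract groups, so the two groupoids have the same candidate morphisms. On morphisms the functor should be the identity, $g \mapsto g$.

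This is well defined by the $G$-equivariance in Lemma \ref{lem:inner_twist_cocycles}: $(\psi^g)\cdot\varphi = (\psi\cdot\varphi)^g$. So if $g$ is a morphism $\psi \to \psi'$, then it is also a morphism $\psi\cdot\varphi \to \psi'\cdot\varphi$. Compatibility with composition and identities is immediate, because composition in both groupoids is multiplication in $G$.

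One point to verify is the meaning of $\psi^g$ in $BG_\varphi(k)$. There the conjugation action uses the twisted Galois action, namely $g^{-1}\psi(\sigma)\varphi(\sigma)\sigma(g)\varphi(\sigma)^{-1}$. This is exactly the expression that appears in the equivariance computation in the proof of that lemma, so it is consistent.

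\textbf{Full faithfulness.} We need $\mathrm{Hom}(\psi,\psi') \to \mathrm{Hom}(\psi\cdot\varphi,\psi'\cdot\varphi)$ to be bijective. Injectivity holds because the map is the identity on $G$.

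For surjectivity, suppose $(\psi\cdot\varphi)^g = \psi'\cdot\varphi$. Equivariance rewrites this as $(\psi^g)\cdot\varphi = \psi'\cdot\varphi$. Injectivity of the object map then gives $\psi^g = \psi'$.

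\textbf{Conclusion.} The functor is fully faithful and bijective on objects, in particular essentially surjective, so it is an equivalence. It even induces an isomorphism of the underlying categories, and hence a bijection $\H^1(k,G_\varphi)\cong\H^1(k,G)$.

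There is no real obstacle here. The only care needed is bookkeeping: making sure the action of $G$ on $Z^1(k,G_\varphi)$ is the one defined with the twisted Galois action.
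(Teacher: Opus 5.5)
Your proof is correct and is exactly the argument the paper intends: the paper gives no separate proof and presents the corollary as an immediate consequence of Lemma~\ref{lem:inner_twist_cocycles}, because a $G$-equivariant bijection of cocycle sets induces an isomorphism of the corresponding action groupoids. Your proposal spells this out. Your check that the $G$-action on $Z^1(k,G_\varphi)$ must use the twisted Galois action is the right bookkeeping point, and it matches the equivariance computation in that lemma.
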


The following lemma is the crucial property of the Weil restriction.
\begin{lemma}\label{lem:Weil_restriction}
	The map of Example \ref{ex:Weil_restriction} induces an equivalence  $BR_{K/k} G(k) \cong BG(K)$ of groupoids. In particular, it induces a bijection $BR_{K/k} G[k] \cong BG[K]$.
\end{lemma}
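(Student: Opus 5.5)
The plan is to prove directly that the evaluation map of Example~\ref{ex:Weil_restriction} is an equivalence of groupoids. This amounts to Shapiro's lemma made explicit on cocycles. The idea is to encode a cocycle of $\mathrm{R}_{K/k}G$ by a single $G$-valued function on $\Gamma_k$, and then read off faithfulness, fullness and essential surjectivity from that encoding.

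\emph{Step 1: the functor.} Define $F\colon B\mathrm{R}_{K/k}G(k)\to BG(K)$ on objects by $F(\varphi)(\sigma)=\varphi(\sigma)(1)$ for $\sigma\in\Gamma_K$; this is a cocycle by the computation in Example~\ref{ex:Weil_restriction}. On morphisms send $f\in \mathrm{R}_{K/k}G$ to $f(1)\in G$. Compatibility with conjugation follows from
\[
\varphi^f(\sigma)(1)=f(1)^{-1}\varphi(\sigma)(1)f(\sigma)=f(1)^{-1}F(\varphi)(\sigma)\,\sigma(f(1)),
\]
using $\Gamma_K$-equivariance of $f$ for $\sigma\in\Gamma_K$.

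\emph{Step 2: encoding by a function $h$.} For $\varphi\in Z^1(k,\mathrm{R}_{K/k}G)$, set $h_\varphi(\tau):=\varphi(\tau)(1)$ for $\tau\in\Gamma_k$. Evaluating the cocycle law $\varphi(\tau\sigma)=\varphi(\tau)\,\tau(\varphi(\sigma))$ at $1$ gives
\[
\varphi(\sigma)(\tau)=h_\varphi(\tau)^{-1}h_\varphi(\tau\sigma).
\]
Hence $\varphi$ is determined by $h_\varphi$, and $h_\varphi(1)=1$. Moreover, equivariance of $\varphi(\tau)$ yields $h_\varphi(\rho\tau)=\psi(\rho)\,\rho(h_\varphi(\tau))$ for $\rho\in\Gamma_K$, where $\psi=F(\varphi)$.

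Conversely, given $\psi\in Z^1(K,G)$, choose coset representatives $\tau_i$ of $\Gamma_K\backslash\Gamma_k$ with $\tau_1=1$. Put $h(\rho\tau_i):=\psi(\rho)$; this is continuous because the cosets are open. Define $\varphi(\sigma)(\tau):=h(\tau)^{-1}h(\tau\sigma)$. One checks three things directly:
\begin{itemize}
\item each $\varphi(\sigma)$ is $\Gamma_K$-equivariant, since the $\psi(\rho)$ factors cancel;
\item $\varphi$ satisfies the cocycle law, since the product telescopes;
\item $F(\varphi)=\psi$.
\end{itemize}
So $F$ is surjective on objects, in particular essentially surjective.

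\emph{Step 3: fully faithful.} By Step 2 and the formula in Step 1, $\varphi'=\varphi^f$ holds if and only if $h_{\varphi'}(\tau)=f(1)^{-1}h_\varphi(\tau)f(\tau)$ for all $\tau$. Equivalently,
\[
f(\tau)=h_\varphi(\tau)\,f(1)\,h_{\varphi'}(\tau)^{-1}.
\]
Thus a morphism is uniquely determined by $f(1)$, which gives faithfulness.

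For fullness, suppose $g\in G$ satisfies $F(\varphi')=F(\varphi)^g$. Define $f(\tau):=h_\varphi(\tau)\,g\,h_{\varphi'}(\tau)^{-1}$. Then $f(1)=g$, and it remains to show $f$ is $\Gamma_K$-equivariant. Expanding $f(\rho\tau)$ with the transformation rule for $h$ from Step 2 and substituting $\psi'(\rho)=g^{-1}\psi(\rho)\rho(g)$ gives $\rho(f(\tau))$.

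This equivariance check is the main, if minor, obstacle: one has to track the order of products carefully in the non-abelian setting. The remaining verifications are routine. The bijection $B\mathrm{R}_{K/k}G[k]\cong BG[K]$ then follows by passing to isomorphism classes.
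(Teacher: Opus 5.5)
Your overall strategy works, but Step~3 contains an algebra slip exactly at the point you flagged as delicate, and as written the fullness argument fails. Solving $h_{\varphi'}(\tau)=f(1)^{-1}h_\varphi(\tau)f(\tau)$ for $f(\tau)$ gives
\[
f(\tau)=h_\varphi(\tau)^{-1}f(1)\,h_{\varphi'}(\tau),
\]
not $h_\varphi(\tau)f(1)h_{\varphi'}(\tau)^{-1}$. Suppose you define $f(\tau)=h_\varphi(\tau)\,g\,h_{\varphi'}(\tau)^{-1}$ as you propose. Then the transformation rule from Step~2 yields
\[
f(\rho\tau)=\psi(\rho)\,\rho(h_\varphi(\tau))\,g\,\rho(h_{\varphi'}(\tau))^{-1}\,\psi'(\rho)^{-1}.
\]
Here the cocycle values sit on the outside, not adjacent to $g$, so the relation $\psi'(\rho)=g^{-1}\psi(\rho)\rho(g)$ cannot be applied. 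For instance, take $G$ non-abelian with trivial action and $g=1$. Then this $f$ fails to be equivariant whenever $h_\varphi(\tau)h_{\varphi'}(\tau)^{-1}$ does not commute with $\psi(\Gamma_K)$.

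Even when your $f$ happens to be equivariant, for example $G$ abelian with trivial action, it is the wrong element. There one gets $f(1)^{-1}h_\varphi(\tau)f(\tau)=h_\varphi(\tau)^2h_{\varphi'}(\tau)^{-1}$, which is not $h_{\varphi'}(\tau)$ in general, so $\varphi^f\neq\varphi'$. Faithfulness is unaffected, since it only uses that the identity $h_{\varphi'}(\tau)=f(1)^{-1}h_\varphi(\tau)f(\tau)$ determines $f$ from $f(1)$.

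The repair is to set $f(\tau):=h_\varphi(\tau)^{-1}\,g\,h_{\varphi'}(\tau)$. Using $\psi(\rho)^{-1}g\,\psi'(\rho)=\rho(g)$, one gets
\[
f(\rho\tau)=\rho(h_\varphi(\tau))^{-1}\,\psi(\rho)^{-1}g\,\psi'(\rho)\,\rho(h_{\varphi'}(\tau))=\rho\bigl(h_\varphi(\tau)^{-1}g\,h_{\varphi'}(\tau)\bigr)=\rho(f(\tau)).
\]
This $f$ is continuous with $f(1)=g$, and $h_{\varphi^f}(\tau)=g^{-1}h_\varphi(\tau)f(\tau)=h_{\varphi'}(\tau)$. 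Hence $\varphi^f=\varphi'$ by Step~2.

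You should also say why the cocycle $\varphi$ built in Step~2 is continuous as a map to the finite group $\mathrm{R}_{K/k}G$, not just why $h$ is continuous. It is, because $h$ is right-invariant under some open normal subgroup of $\Gamma_k$.

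With these corrections your argument is a self-contained proof of the full equivalence, by a different route from the paper. The paper gives no argument of its own. It cites Shapiro's lemma for abelian $G$ and \cite[Prop.~8]{Stix10} for the bijection on isomorphism classes, then asserts that automorphism groups are preserved by the injectivity argument. Your encoding by $h_\varphi$ is essentially the standard proof of non-abelian Shapiro, and it treats objects and morphisms uniformly.
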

\begin{proof}
	If $G$ is abelian then this is Shapiro's lemma. For a proof in the non-abelian setting on the level of isomorphism classes we refer to \cite[Prop.~8]{Stix10}. That the map preserves automorphism groups follows from the same argument as injectivity.
\end{proof}

The classifying stack $BS_n$ has an alternative description which allows one to encode non-Galois extensions of degree $n$. Recall that an \textit{\'etale $k$-algebra} is simply a product of separable field extensions. In the following statements by ``general'' we strictly speaking mean ``away from a thin set'' (see \S\ref{sec:thin} for terminology).

\begin{lemma} \label{lem:S_n}
	The groupoid $BS_n(k)$ is equivalent to the groupoid whose objects are degree $n$ \'etale $k$-algebras and whose morphisms are $k$-algebra isomorphisms. In particular, a general element of $BS_n(k)$ corresponds to a separable field extension of degree $n$.
\end{lemma}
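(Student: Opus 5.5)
Here $S_n$ carries the trivial $\Gamma_k$-action, so $BS_n(k)$ is the groupoid of continuous homomorphisms $\varphi:\Gamma_k\to S_n$ with morphisms given by conjugation. The plan is to pass through finite continuous $\Gamma_k$-sets, using Grothendieck's form of Galois theory.

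\textbf{Step 1: homomorphisms to $\Gamma_k$-sets.} A homomorphism $\varphi$ is the same as a continuous action of $\Gamma_k$ on $\{1,\dots,n\}$. Conjugating by $g\in S_n$ corresponds to transporting the action along the bijection $i\mapsto g^{-1}(i)$. Hence $BS_n(k)$ is equivalent to the groupoid of continuous $\Gamma_k$-sets of cardinality $n$ with equivariant bijections. Full faithfulness is immediate: an equivariant bijection $\{1,\dots,n\}\to\{1,\dots,n\}$ between two actions is exactly a $g$ with $\varphi'=\varphi^g$. Essential surjectivity follows by choosing any labelling of an $n$-element $\Gamma_k$-set.

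\textbf{Step 2: Galois theory.} Next I use the standard anti-equivalence between finite \'etale $k$-algebras and finite continuous $\Gamma_k$-sets:
$$A\mapsto \Hom_{k\text{-alg}}(A,k^{\sep}),\qquad X\mapsto \mathrm{Map}_{\Gamma_k}(X,k^{\sep}).$$
This sends algebras of degree $n$ to sets of size $n$. I would check this functor by functor:
\begin{enumerate}
\item For a field $L$, the set $\Hom(L,k^{\sep})$ is the transitive set $\Gamma_k/\Gamma_L$, of size $[L:k]$ by separability.
\item Products of algebras go to disjoint unions of sets.
\item Conversely, $\mathrm{Map}_{\Gamma_k}(\Gamma_k/H,k^{\sep})=(k^{\sep})^H$, which is a field of degree $[\Gamma_k:H]$ by infinite Galois theory.
\end{enumerate}
The two constructions are mutually inverse naturally, using $A\otimes_k k^{\sep}\cong (k^{\sep})^{\Hom(A,k^{\sep})}$ and taking $\Gamma_k$-invariants. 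This is Galois descent for \'etale algebras and is the main point to verify; I would cite it as standard rather than redo it. Restricting to isomorphisms, contravariance becomes harmless because we may invert arrows. This gives the claimed equivalence of groupoids.

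\textbf{Step 3: the general element.} Under the equivalence, $A$ is a field exactly when the action is transitive, i.e.\ $\varphi(\Gamma_k)$ is a transitive subgroup of $S_n$. The complement consists of the $\varphi$ whose image lies in some intransitive subgroup, and in particular in one of the finitely many maximal intransitive subgroups $H=S_m\times S_{n-m}$ up to conjugacy. I would then show that this is a thin set in the sense of \S\ref{sec:thin}, namely a finite union of images of $BH(k)\to BS_n(k)$ for proper subgroups $H$. I would appeal to the definition there once it is given.

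The only real obstacle is bookkeeping: matching conjugation in Definition~\ref{def:cocycle} with isomorphisms of $\Gamma_k$-sets, and quoting the Galois correspondence correctly.
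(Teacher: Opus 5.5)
Your proof is correct, but it takes a different route from the paper's. The paper argues in one line via the twisting principle (Serre, \S III.1). A degree $n$ \'etale algebra is exactly a $k$-form of $k^n$, i.e.\ an algebra $A$ with $A\otimes_k k^{\mathrm{sep}}\cong (k^{\mathrm{sep}})^n$. Such an isomorphism is unique up to the automorphism group $S_n$ of $(k^{\mathrm{sep}})^n$, so forms are classified by $S_n$-valued cocycles.

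You instead pass through the intermediate groupoid of $n$-element continuous $\Gamma_k$-sets. The comparison of that groupoid with $BS_n(k)$ is pure bookkeeping, and all the Galois theory is packaged into the Grothendieck--Galois anti-equivalence.

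Your route has two advantages:
\begin{enumerate}
\item The equivalence is visibly one of groupoids, with morphisms and automorphism groups matched. The twisting principle as usually stated only gives a bijection on isomorphism classes, and the matching of automorphisms is an extra (routine) remark the paper leaves implicit.
\item The criterion that $A$ is a field if and only if $\varphi(\Gamma_k)$ is transitive drops out immediately. So your Step 3 actually proves the ``in particular'' clause, via the basic thin sets attached to the subgroups $S_m\times S_{n-m}$. The paper does not argue this; it also follows from Example~\ref{ex:thin}(1), since an intransitive image is in particular a proper subgroup.
\end{enumerate}

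The paper's route is more uniform. The same twisting argument classifies forms of any object by cocycles in its automorphism $\Gamma_k$-group, which is the mechanism the paper uses elsewhere (inner twists, Weil restriction). Your Step 1, by contrast, is specific to a permutation group with trivial Galois action.
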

\begin{proof}
	This is a special case of the general phenomenon of cocycles parametrizing twists \cite[\S III.1]{Ser02}. Namely, a $k$-algebra $A$ is a degree $n$ \'etale algebra if and only if $A \otimes_{k} k^{\sep} \cong (k^{\sep})^n$. Moreover, such an isomorphism is unique up to the action of $S_n$ on $(k^{\sep})^n$ which permutes the factors. So the \'etale algebras are exactly the Galois twists of the algebra $k^n$. As a $k$-algebra $k^n$ has automorphism group $S_n$, thus the twists are parametrised by cocycles with values in $S_n$, which is exactly $BS_n(k)$.
\end{proof}

\begin{example}
	Let $G \subset S_n$ be a transitive subgroup. Then the image of the map $BG(k) \to BS_n(k)$ consists of exactly those 
	degree $n$ \'etale $k$-algebras whose Galois group is conjugate to a subgroup of $G$ as a permutation group. In particular, a general element in the image corresponds to a degree $n$ field extension whose Galois group is conjugate to $G$. Such fields are exactly the focus of Malle's original conjecture.
\end{example}

The natural way to count in a groupoid is via the groupoid cardinality. To get to more uniform looking statements, we use the following lemma. 

\begin{lemma} \label{lem:groupoid_count}
	Let $F:Z^1(k,G) \to BG[k]$ be the natural map and $f:BG[k] \to \C$ any function. Then for any
	finite subset $W \subseteq BG[k]$ we have
	$$\sum_{\varphi \in W} \frac{f(\varphi)}{|\Aut \varphi|} = \frac{1}{|G|}\sum_{ \psi \in F^{-1}(W)} f(F(\psi)).$$
\end{lemma}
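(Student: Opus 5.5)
The identity is an orbit--stabilizer count applied fibre by fibre. Since $W$ is finite and both sides are additive over the elements of $W$, it suffices to prove the statement when $W = \{x\}$ is a single isomorphism class. Here we read the left-hand side as a sum over isomorphism classes $x \in W$, with $\Aut x$ the automorphism group of any representative object in the groupoid $BG(k)$.

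Next we identify the automorphism groups. By Definition \ref{def:BG}, the morphisms of $BG(k)$ are given by the right conjugation action of $G$ on $Z^1(k,G)$ from Definition \ref{def:cocycle}. A morphism $\psi \to \psi'$ is an element $g \in G$ with $\psi^g = \psi'$. Hence for a cocycle $\psi$ representing $x$, we have $\Aut \psi = \{ g \in G : \psi^g = \psi\}$, which is the stabilizer $\mathrm{Stab}_G(\psi)$. This group is finite since $G$ is finite, and its isomorphism type depends only on $x$ because stabilizers within one orbit are conjugate.

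The fibre $F^{-1}(x)$ is exactly the $G$-orbit of $\psi$, since $\H^1(k,G)$ is the orbit set. The orbit--stabilizer theorem gives $\#F^{-1}(x) = |G|/|\mathrm{Stab}_G(\psi)| = |G|/|\Aut x|$. Note that $f(F(\psi')) = f(x)$ is constant on the fibre, so
$$\frac{1}{|G|}\sum_{\psi' \in F^{-1}(x)} f(F(\psi')) = \frac{f(x)\,\# F^{-1}(x)}{|G|} = \frac{f(x)}{|\Aut x|}.$$
Summing over $x \in W$ gives the lemma.

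There is no real obstacle here. The only point needing care is the correct identification of $\Aut x$ with the stabilizer under the conjugation action, including the left/right conventions; these do not affect the cardinalities. We also check that $F^{-1}(W)$ is finite, which holds because it is a finite union of orbits of the finite group $G$.
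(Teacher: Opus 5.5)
Your proposal is correct and follows the paper's proof, which reduces to $|W| = 1$ and applies the orbit--stabiliser theorem. You additionally spell out the identification of $\Aut \psi$ with the stabiliser of $\psi$ under the conjugation action, and of the fibre $F^{-1}(x)$ with the $G$-orbit of $\psi$.
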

\begin{proof}
	We may assume $|W| =1$, where it is immediate from
	the 	orbit-stabiliser theorem.
\end{proof}

We also require a notion of integral points on $BG$ when $k$ is a number field. These can be defined provided $G$ is unramified.
\begin{definition} \label{def:integral_point}
	Let $v$ be a place at which $G$ is unramified, in the sense of Definition \ref{def:unramified}.
	We let $BG(\O_v) := Z^1_{\textrm{nr}}(k_v,G) \subset BG(k_v)$ be the corresponding fully faithful subgroupoid	and thus $BG[\O_v]:=\H^1_{\textrm{nr}}(k_v,G)$.
\end{definition}

\subsection{Thin sets} \label{sec:thin}
As explained in \S\ref{sec:thin_Manin}, in Manin's conjecture it is necessary to remove a thin set of rational points to obtain the correct prediction. 

A formal definition of a thin subset in terms of stacks can be found in  \cite[Def.~2.5]{LS24}. We will instead use the following equivalent cocycle-theoretic definition.

\begin{definition} \label{def:thin}
	Let $G$ be a $\Gamma_k$-group, $\varphi \in Z^1(k, G)$ and let $H \subsetneq G_{\varphi}$ be a $\Gamma_k$-subgroup. We define the corresponding \emph{basic thin subset} $\Omega_{\varphi, H} \subset BG[k]$ as the image of the composition
	\[
	BH[k] \to BG_{\varphi}[k] \cong BG[k]
	\]
	where the isomorphism is Corollary \ref{cor:inner_twists}.
	
	A subset $\Omega \subset BG[k]$ is \emph{thin} if it contained in a finite union of basic thin subsets.
\end{definition}

\begin{example} \label{ex:thin} \hfill
	\begin{enumerate}
		\item The collection of non-surjective elements of $BG[k]$ is thin \cite[Lem.~2.7]{LS24}.
		\item Let $N \subsetneq G$ be a non-trivial normal subgroup scheme and $\psi$ a $1$-cocycle with values in $G/N$. 
		We call an element $\varphi \in BG[k]$ a \textit{lift} of $\psi$ if the composition $\Gamma_k \to G \to G/N$
		induced by $\varphi$ equals $\psi$. 
		
		Then the collection of elements $\varphi \in BG[k]$ which lift $\psi$ is thin. Indeed, it is either empty or there exists a lift $\varphi$. Then by Example \ref{ex:inner_twist} the set of lifts is the basic thin subset coming from  $N_{\varphi} \subset G_{\varphi}$.
	\end{enumerate}
\end{example}

This latter example places a geometric structure on solutions to embedding problems. This perspective is taken up in more detail in \cite{LT26}.

\begin{example} \label{ex:wreath}
	Let us consider the wreath product $G \wr S_n$ for some $n$, i.e. $G \wr S_n = G^n \rtimes S_n$ where $S_n$ acts on $G^n$ by permuting the factors.
	
	Let $\varphi \in Z^1(k, S_n)$ correspond to a degree $n$-extension $K/k$ as in Lemma \ref{lem:S_n}, think of $\varphi \in Z^1(k,G \wr S_n)$ using the section $S_n \to G \wr S_n$ coming from the semi-direct product structure. One can then compute that $(G^n)_{\varphi} \cong R_{K/k} G$.
	
	In particular, if $k = \Q$, $G = C_3$, $n = 2$ and $K = \Q(\sqrt{-3})$ then this shows that the set of cyclic cubic extensions of $ \Q(\sqrt{-3})$ forms a thin subset of $B(C_3 \wr C_2)[\Q]$. This is Kl\"uners counterexample \cite{Klu05} to Malle's original conjecture.
\end{example}

\section{Heights} \label{sec:heights}
A function which associates to a number field a real number appears under various names in the literature; for example ``$f$-discriminant'' \cite[\S4.2]{EV05},  “counting function” \cite[\S2.1]{Woo10}, or ``invariant'' \cite{Alb21, Wan24}.

In this work, we use the term \emph{height function}. This choice reflects the usage of height functions to measure the complexity of arithmetic objects and aligns with terminology used elsewhere in arithmetic statistics, such as the height of an elliptic curve or the height of a rational point. It also avoids confusion with invariants under group actions or with the local invariant $\inv_v: \Br k_v \to \Q/\Z$ from class field theory, both of which will appear in our work. More than this, the height functions we define can be interpreted as heights on stacks \cite{DYTor,DYBM}, thus offer a unified approach to numerous problems in arithmetic statistics. 

	Some texts use height functions which are not covered by our setting, for example the $\A^1/\P^1$-height from \cite{P1Height}. To differentiate between these heights in a more general context, one could call the heights from Definition \ref{def:heights} \textit{adelic} or \textit{inertial heights}. Since we do not consider more general heights in this paper, we use the term \emph{height} for brevity.

\subsection{Galois actions on conjugacy classes}
Let $k$ be a field of characteristic $0$ with absolute Galois group $\Gamma_k := \Gal(\bar{k}/k)$ and let $G$ be a finite $\Gamma_k$-group. Fundamental to our framework is the following.

\begin{definition}[Tate twist] \label{def:G(-1)}
	Let $\dual{\Z}(1) := \varprojlim_n \mu_n$. We define
\begin{equation*} 
	G(-1):=\Hom(\dual{\Z}(1), G).
\end{equation*}
We denote by $\mathcal{C}_G:=G(-1)/\mathrm{conj}$ the quotient of $G(-1)$ via the natural conjugacy action of $G$. We let $e \in \mathcal{C}_G$ be the conjugacy class of the identity and write $\mathcal{C}_G^* := \mathcal{C}_G \setminus \{e\}$.
\end{definition}

Both $G(-1)$ and $\mathcal{C}_G$ are finite sets equipped with a $\Gamma_k$-action. Explicitly, the  Galois action is given by
$$ \Gamma_k \times G(-1) \to G(-1), \quad (\sigma,\gamma) \mapsto (\zeta \mapsto \sigma(\gamma(\sigma^{-1}(\zeta)))).$$
 We warn that if $G$ is non-abelian then $G(-1)$ has no natural group structure.  Any element of $G(-1)$ may be represented by a homomorphism $\mu_{\exp(G)} \to G$ where $\exp(G)$ denotes the exponent of $G$.

There is a very closely related Galois set which appears more commonly in the Malle's conjecture literature. Note that the set $G$ admits an action of $\hat{\Z}^\times$ via exponentiation; we call this the \emph{invertible powering action} (this preserves the group structure if and only if $G$ is abelian).

\begin{definition}[Cyclotomic character]
Recall that there is a canonical injection $\Gal(k(\mu_n)/k) \to (\Z/n\Z)^\times$ for which the automorphism $\zeta_n \mapsto \zeta_n^a$ is sent to the class $a \in (\Z/n\Z)^\times$. Taking the limit over all $n$, we obtain the \emph{cyclotomic character} $\cycl: \Gamma_k \to \dual{\Z}^\times$.
\end{definition}

\begin{definition}[Anticyclotomic twist]\label{def:G(cycl)}
Let $G(\cycl^{-1})$ denote the $\Gamma_k$-set with underlying set $G$, but with Galois action twisted by $\cycl^{-1}$:
\begin{equation*} 
\Gamma_k \times G(\cycl^{-1}) \to G(\cycl^{-1}), \quad (\sigma,g) \mapsto \sigma(g)^{\cycl(\sigma)^{-1}}.
\end{equation*}
\end{definition}

The Galois sets $G(-1)$ and $G(\cycl^{-1})$ are isomorphic, albeit non-canonically.
\begin{lemma}\label{lem:Galois_action_on_G(-1)}
Choose a primitive topological generator $(\zeta_n)_n \in \hat{\Z}(1)$. Then the map
$$G(-1) \to G(\cycl^{-1}), \quad (\gamma: \mu_{n} \to G) \mapsto \gamma(\zeta_n)$$
is an isomorphism of $\Gamma_k$-sets which preserves the conjugacy action.
\end{lemma}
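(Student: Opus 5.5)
The plan is to first make the map well defined, then prove bijectivity, and only then check the two compatibilities.

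\emph{Well-definedness.} An element $\gamma \in G(-1) = \Hom(\dual{\Z}(1), G)$ is continuous and $G$ is finite, so $\gamma$ factors through $\mu_n$ for any $n$ divisible by $\exp(G)$ (in fact by $|G|$). Under this factorisation $\gamma(\zeta_n)$ is the image under $\gamma$ of the topological generator $(\zeta_m)_m$. Because the $\zeta_m$ are compatible, this value does not depend on which such $n$ we use.

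\emph{Bijectivity.} Continuous homomorphisms from the procyclic group $\dual{\Z}(1)$ to $G$ are determined by the image of a topological generator, which gives injectivity. For surjectivity, take any $g \in G$ and let $n$ be a multiple of the order of $g$ (for instance $n = \exp(G)$). Then $\zeta_n^a \mapsto g^a$ is a homomorphism $\mu_n \to G$ sending $\zeta_n$ to $g$.

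\emph{Compatibility with conjugation.} Conjugation is applied pointwise on both sides: $(h\gamma h^{-1})(\zeta_n) = h\gamma(\zeta_n)h^{-1}$. So the bijection respects it.

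\emph{Galois equivariance.} This is the only step with real content, and the thing to get right is which power of the cyclotomic character appears. Let $\sigma \in \Gamma_k$ and set $a = \cycl(\sigma) \in \dual{\Z}^\times$, so that $\sigma(\zeta_n) = \zeta_n^{a}$ and $\sigma^{-1}(\zeta_n) = \zeta_n^{a^{-1}}$, with exponents read mod $n$. By the formula for the action on $G(-1)$,
\[
(\sigma\cdot\gamma)(\zeta_n) = \sigma\bigl(\gamma(\sigma^{-1}(\zeta_n))\bigr) = \sigma\bigl(\gamma(\zeta_n^{a^{-1}})\bigr) = \sigma\bigl(\gamma(\zeta_n)^{a^{-1}}\bigr) = \sigma(\gamma(\zeta_n))^{\cycl(\sigma)^{-1}}.
\]
Here the third equality holds because $\gamma$ is a homomorphism, and the last because $\sigma$ acts on $G$ by group automorphisms, so it commutes with powering. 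The right-hand side is precisely $\sigma$ acting on $\gamma(\zeta_n)$ in $G(\cycl^{-1})$, which gives equivariance.

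Two minor points need checking along the way. First, powering $g^{a}$ by $a \in \dual{\Z}^\times$ is well defined because $g$ has finite order dividing $n$. Second, the choice of generator only affects the isomorphism and not the equivariance, which is why the identification is non-canonical.
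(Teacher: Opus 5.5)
Your proof is correct and follows the same route as the paper. The paper just asserts bijectivity, and its only real step is the same computation
$(\sigma\cdot\gamma)(\zeta_n)=\sigma(\gamma(\zeta_n^{\cycl(\sigma)^{-1}}))=\sigma(\gamma(\zeta_n))^{\cycl(\sigma)^{-1}}$; your checks of well-definedness (factoring through $\mu_n$ for $\exp(G)\mid n$), bijectivity via the procyclic structure of $\dual{\Z}(1)$, and compatibility with conjugation are details the paper leaves implicit.
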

\begin{proof}
	It is clearly bijective. Thus it suffices to show that it preserves the Galois
	action. For this we have
	\[(\sigma \cdot \gamma)(\zeta_n) := \sigma(\gamma(\sigma^{-1}(\zeta_n))) = \sigma(\gamma(\zeta_n^{\chi(\sigma^{-1})})) 
	= \sigma(\gamma(\zeta_n))^{\chi(\sigma)^{-1}}. \qedhere \]
\end{proof}

\begin{definition} \label{def:residue_field}
	Let $c \in \mathcal{C}_G$. We define the \emph{residue field} $k(c)$ of $c$ to be
	the fixed field of the stabiliser of $c$ with respect to the action of $\Gamma_k$.
	The isomorphism class of this field only depends on the Galois orbit of $c$.
\end{definition}

\begin{remark}[Sectors]
	In \cite{DYBM,LS24}, the elements of $\mathcal{C}_G/\Gamma_k$,
	i.e.~the Galois orbits of conjugacy classes of $G(-1)$, are called \emph{sectors};
	see \cite[\S4.3]{LS24} for more details.	This terminology comes from string theory.
	It is the sector perspective which generalises to other stacks, 
	and put forward in \cite{DYBM}.
\end{remark}

\subsubsection{Computation via character tables}
If $G$ is constant then one can read off the Galois action on $\mathcal{C}_G$ from the character table of $G$ as follows. Recall \cite[Thm.~24]{Ser77} that if $G$ is a finite group then any character $\chi: G \to \C$ factors through $k(\mu_{\infty}) \subset \C$.

\begin{lemma} \label{lem:complex_character_table}
	The number of Galois orbits of $\mathcal{C}_G$ is equal to the number of Galois orbits of irreducible complex characters of $G$.
\end{lemma}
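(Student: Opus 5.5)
The plan is to reduce the statement to Brauer's permutation lemma applied to the character table, and then to count orbits with Burnside's lemma.

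\textbf{Step 1: describe both actions through one finite group.} Let $n = \exp(G)$. Since $G$ is constant, Lemma \ref{lem:Galois_action_on_G(-1)} identifies $\mathcal{C}_G$, as a $\Gamma_k$-set, with the set of conjugacy classes of $G$. Under this identification $\sigma$ acts by $[g] \mapsto [g^{\cycl(\sigma)^{-1}}]$. Hence the action factors through the finite group $A := \cycl(\Gamma_k) \bmod n \subseteq (\Z/n\Z)^\times$, which is the image of $\Gal(k(\mu_n)/k)$.

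On the character side, every irreducible character $\chi$ takes values in $\Q(\mu_n) \subset k(\mu_n)$, and $\sigma$ acts by $\chi \mapsto \sigma\circ\chi$. The values $\chi(g)$ are sums of $n$-th roots of unity (eigenvalues of $\rho(g)$). Therefore, if $\cycl(\sigma) \equiv a \bmod n$, then $\sigma(\chi(g)) = \chi(g^{a})$. This action also factors through $A$, and the element $a \in A$ sends $\chi$ to $\chi^{(a)} : g \mapsto \chi(g^a)$.

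\textbf{Step 2: equal fixed-point counts for each $a \in A$.} Let $X$ be the character table, with rows indexed by irreducible characters and columns by conjugacy classes. By the orthogonality relations $X$ is invertible. For $a \in A$, let $P_a$ be the permutation matrix of $\chi \mapsto \chi^{(a)}$ on rows, and $Q_a$ the permutation matrix of $[g] \mapsto [g^a]$ on columns. The identity $\chi^{(a)}(g) = \chi(g^a)$ says exactly that $P_a X = X Q_a$. Hence $Q_a = X^{-1} P_a X$, and so $\operatorname{tr} P_a = \operatorname{tr} Q_a$. In words: the number of irreducible characters fixed by $a$ equals the number of conjugacy classes fixed by $a$.

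The Galois action on classes in Step 1 is by $a^{-1}$ rather than $a$. This does not matter, because $[g]$ is fixed by $a$ if and only if it is fixed by $a^{-1}$.

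\textbf{Step 3: Burnside's lemma.} The number of $\Gamma_k$-orbits on either set equals the number of $A$-orbits. By Burnside's lemma this is $\frac{1}{|A|}\sum_{a\in A} \#\mathrm{Fix}(a)$. By Step 2 the summands agree term by term for the two sets, which proves the lemma.

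The only delicate point is Step 1: checking that the action on $\mathcal{C}_G$ via $G(\cycl^{-1})$ and the action on characters are governed by the same group $A$, with compatible (or inverse) exponents. The rest is the standard Brauer permutation lemma argument.
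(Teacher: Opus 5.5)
Your proof is correct, but it takes a different route from the paper's.

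The paper argues by a dimension count. It takes the $k$-vector space $V$ of class functions on $G(\cycl^{-1})$ with values in $k(\mu_\infty)$ that are $\Gal(k(\mu_\infty)/k)$-equivariant, and notes that the irreducible characters form a $k$-basis of $V$. The Galois group is then made to act on $V$ through the values only. This action permutes the basis $\mathrm{Irr}(G)$, so the invariant subspace has a basis of orbit sums of characters. On the other hand, an equivariant function that is also $k$-valued is exactly a function constant on Galois orbits of classes, so the same subspace has a basis of orbit indicator functions, and the two dimensions agree.

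You instead compare the two permutation actions element by element. The intertwining relation $P_aX = XQ_a$ (Brauer's permutation lemma) gives equal fixed-point counts for each $a \in A$, and Burnside's lemma turns this into equal orbit counts.

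Your route needs only the invertibility of the character table. It directly yields the finer statement that every single $\sigma \in \Gamma_k$ fixes as many classes as characters, so the orbit counts agree simultaneously over every subgroup of $\Gamma_k$. It also makes the harmless $a$ versus $a^{-1}$ convention (the action on $G(\cycl^{-1})$ versus $\sigma(\chi(g)) = \chi(g^{\cycl(\sigma)})$) explicit, which the paper's sketch passes over.

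The paper's route is more structural. It exhibits a single $k$-vector space whose basis of orbit sums is exactly the set of rows of the rational character table, which is why Lemma~\ref{lem:rational_character_table} follows at once.
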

\begin{proof}
Let $\mathrm{Irr}(G)$ be the set of characters $G \to k(\mu_{\infty})$ of irreducible representations. This is equipped with a natural action of $\Gal(k(\mu_{\infty})/k) \subset \hat{\Z}^{\times}$. This is related to the $\Gal(k(\mu_{\infty})/k)$ action on $G(\cycl^{-1})$ in the following way. 

Let $V$ be the $k$-vector space of class functions $G(\cycl^{-1}) \to k(\mu_{\infty})$ which are $\Gal(k(\mu_{\infty})/k)$-equivariant. Any $\chi \in \mathrm{Irr}(G)$ defines an element of $V$ and they form a basis by \cite[Thm~25]{Ser77}. Consider the action of $\Gal(k(\mu_{\infty})/k)$ on $V$ by only acting on the image. The vector space $V^{\Gal(k(\mu_{\infty})/k)}$ has by the above a basis given by the Galois orbits of $\mathrm{Irr}(G)$. But by definition it also has a basis given by the Galois orbits of the indicator functions of conjugacy classes of $G(\cycl^{-1})$. The result now follows from  Lemma \ref{lem:Galois_action_on_G(-1)}.
\end{proof}

When $k = \Q$ there is another way to find the number of Galois orbits in terms of the \emph{rational character table} of $G$. This is the table whose entries are the sums of the Galois conjugates of the entries of the complex character table (see \cite[\href{https://www.lmfdb.org/knowledge/show/group.rational_character_table}{Rational character table of a group}]{LMFDB}).

\begin{lemma} \label{lem:rational_character_table}
	For $k = \Q$, the number of Galois orbits of $\mathcal{C}_G$ is equal to the number of 
	rows in the rational character table of $G$.
\end{lemma}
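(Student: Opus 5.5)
The plan is to reduce to Lemma \ref{lem:complex_character_table} and then identify the rows of the rational character table with the $\Gamma_\Q$-orbits of irreducible complex characters. For $k=\Q$ the cyclotomic character gives an isomorphism $\Gal(\Q(\mu_\infty)/\Q)\cong\hat{\Z}^\times$, and every complex character of $G$ takes values in $\Q(\mu_{\exp(G)})$. So the action of $\Gamma_\Q$ on $\mathrm{Irr}(G)$ factors through the full group $(\Z/\exp(G)\Z)^\times$, and by Lemma \ref{lem:complex_character_table} the number of Galois orbits of $\mathcal{C}_G$ equals the number of orbits of $(\Z/\exp(G)\Z)^\times$ on $\mathrm{Irr}(G)$.

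Next, I recall what a row of the rational character table is. For each Galois orbit $O\subset\mathrm{Irr}(G)$ there is one row, namely the function $\chi_O:=\sum_{\chi\in O}\chi$, which is $\Q$-valued. This is the reading of the description ``sums of the Galois conjugates of the entries'': a row is indexed by a Galois conjugacy class of irreducible characters. Read this way, rows are in bijection with orbits by definition. To make sure distinct orbits really give distinct rows, I would check that the functions $\chi_O$ are pairwise distinct, and in fact linearly independent. This follows from orthogonality of the irreducible characters: if $O\neq O'$ are disjoint orbits, then $\langle\chi_O,\chi_{O'}\rangle=0$, while $\langle\chi_O,\chi_O\rangle=|O|>0$.

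A subtlety is that some sources define rows of the rational character table via characters of irreducible $\Q$-representations. Those characters are $m_O\chi_O$, where $m_O$ is the Schur index. The rescaling by $m_O$ does not change how many rows there are, since the irreducible $\Q$-representations are still in bijection with the Galois orbits $O$. I would mention this in one line, citing \cite{Ser77}, \S12.

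The main obstacle is only to pin down which definition of the rational character table the LMFDB uses and to confirm that it has one row per Galois orbit. Once that is settled, the bijection between rows and orbits, combined with Lemma \ref{lem:complex_character_table}, gives the claim immediately. A consistency check is also available: the number of columns, i.e.\ the $\Q$-conjugacy classes of $G$ (elements generating conjugate cyclic subgroups), equals the number of $\hat{\Z}^\times$-orbits on $G(\cycl^{-1})/\mathrm{conj}$. By Lemma \ref{lem:Galois_action_on_G(-1)} this is exactly the number of Galois orbits of $\mathcal{C}_G$, and the rational character table is square.
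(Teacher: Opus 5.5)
Your proposal is correct and matches the paper's proof, which is just ``immediate from the definition and Lemma \ref{lem:complex_character_table}'': by definition the rows of the rational character table are indexed by the $\Gamma_\Q$-orbits of irreducible complex characters, and you spell this out. Your extra remarks (linear independence of the $\chi_O$, the Schur index not affecting the row count, and the squareness check via $\Q$-classes) are accurate but not needed.
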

\begin{proof}
	Immediate from the definition and Lemma \ref{lem:complex_character_table}.
\end{proof}
The advantage of Lemma \ref{lem:rational_character_table} is that the rational character table is often much smaller than the complex character table, hence easier to work with. Moreover it is usually included in standard online databases, such as the LMFDB.

In the following, we say that the complex character table is \emph{rational} if its entries are rational numbers; equivalently the complex character table equals the rational character table.

\begin{example} \label{ex:Galois_action} \hfill
	\begin{enumerate}
		\item The symmetric group $S_n$ has a rational complex character table so the Galois action on $\mathcal{C}_G$ is trivial.
		\item If $G = \mu_n$ then $G(-1) = C_n$ with trivial Galois action.
		\item If $G = \Z/n \Z$ then $G(-1) = \Hom(\mu_n, \Z/n\Z)$. For example, if $\sigma \in \Gamma_k$ sends a primitive $n$-th root of unity $\zeta$ to $\zeta^q$ for $(n, q) = 1$ then it acts by multiplication by $q^{-1}$ on $G(-1)$.
		\item If $G = A_4$ then $\mathcal{C}_G$ has $4$ elements. There are two rational elements, one of order $1$ and one of order $2$. The other two elements have order $3$ and Galois acts on them by factoring through the quadratic extension $\Q(\sqrt{-3})/\Q$.
	\end{enumerate}
\end{example}

\subsection{Ramification type} \label{sec:ramification_type}
Let $k$ be a number field. Essential to our framework is the notion of \emph{ramification type}, as introduced in \cite[\S7.1]{LS24}. This keeps track of the ramification behaviour of a cocycle, and allows one to compare the ramification of different cocycles using group theoretic data.

As motivation, let $\varphi \in BG(k)$. Recall that to a place $v$ which is unramified in the number field corresponding to $\varphi$, one can associate a conjugacy class of $G$, namely, the conjugacy class of the image of the frobenius element at $v$. The ramification type is an analogue of this construction at the tame places which takes into account the ramification behaviour of the extension, but instead we obtain a conjugacy class of $G(-1)$ rather than a conjugacy class of $G$.

To define this we require the following explicit description of the tame inertia group. Let $I_v$ denote the inertia group at $v$ and $I_v^\mathrm{tame}$ the tame inertia group, i.e.~the Galois group of the maximally tamely ramified extension of $k_v^{\text{nr}}$. 

\begin{lemma} \label{lem:tame_inertia}
	Fix a uniformizer $\pi_v$ of $k_v$. The map
	$$\theta: \varprojlim\limits_{\mathclap{\gcd(q_v,n) = 1}} \,\,\mu_n \to  I_v^{\mathrm{tame}},
	\quad \zeta_n \mapsto ( \pi_v^{1/n} \mapsto \zeta_n \pi_v^{1/n})$$
	is an isomorphism and independent of the uniformizer $\pi_v$.
\end{lemma}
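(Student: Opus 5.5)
The plan is to use the standard description of the maximal tamely ramified extension of $k_v^{\mathrm{nr}}$ via Kummer theory. First I will show that $k_v^{\mathrm{tame}} = \bigcup_{\gcd(n,q_v)=1} k_v^{\mathrm{nr}}(\pi_v^{1/n})$. Any finite tamely ramified extension $L/k_v^{\mathrm{nr}}$ is totally ramified of degree $e$ coprime to $q_v$. Choose a uniformizer $\varpi$ of $L$ with $\varpi^e = u\pi_v$, where $u$ is a unit of $k_v^{\mathrm{nr}}$ up to higher-order terms. The residue field of $k_v^{\mathrm{nr}}$ is separably closed and $e$ is prime to the residue characteristic, so Hensel's lemma makes every unit an $e$-th power. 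Hence $L = k_v^{\mathrm{nr}}(\pi_v^{1/e})$. Conversely, each $k_v^{\mathrm{nr}}(\pi_v^{1/n})$ with $\gcd(n,q_v)=1$ is tame and totally ramified of degree $n$, since $x^n-\pi_v$ is Eisenstein. Finally, $\mu_n\subset k_v^{\mathrm{nr}}$ for such $n$, again by Hensel's lemma.

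Next I will identify the Galois groups at finite level. Because $\mu_n \subset k_v^{\mathrm{nr}}$, Kummer theory gives that $k_v^{\mathrm{nr}}(\pi_v^{1/n})/k_v^{\mathrm{nr}}$ is cyclic of degree $n$. The map $\tau\mapsto \tau(\pi_v^{1/n})/\pi_v^{1/n}$ is then an isomorphism $\Gal(k_v^{\mathrm{nr}}(\pi_v^{1/n})/k_v^{\mathrm{nr}})\to\mu_n$. It is a homomorphism because $\mu_n$ lies in the base field, and it is independent of the choice of $n$-th root. The inverse of this isomorphism is exactly $\theta$ at level $n$.

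These isomorphisms are compatible as $n$ varies over divisibility. If $n\mid m$, restricting $\zeta_m\mapsto(\pi_v^{1/m}\mapsto\zeta_m\pi_v^{1/m})$ to $\pi_v^{1/n}=(\pi_v^{1/m})^{m/n}$ gives $\zeta_m^{m/n}$. This matches the transition maps $\mu_m\to\mu_n$, $\zeta\mapsto \zeta^{m/n}$. Passing to the inverse limit, and using the first step to write $I_v^{\mathrm{tame}}=\varprojlim_n \Gal(k_v^{\mathrm{nr}}(\pi_v^{1/n})/k_v^{\mathrm{nr}})$, shows that $\theta$ is an isomorphism of profinite groups.

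The remaining point is independence of the uniformizer. Another uniformizer has the form $\pi_v'=u\pi_v$ with $u\in\O_{k_v}^\times$. Since $\gcd(n,q_v)=1$, Hensel's lemma gives $u^{1/n}\in k_v^{\mathrm{nr}}$. Any $\tau\in I_v^{\mathrm{tame}}$ therefore fixes $u^{1/n}$, so
\[
\frac{\tau(\pi_v'^{1/n})}{\pi_v'^{1/n}} = \frac{\tau(\pi_v^{1/n})}{\pi_v^{1/n}}.
\]
Thus both uniformizers define the same map. The main obstacle is the first step, namely showing that every tame extension of $k_v^{\mathrm{nr}}$ is a Kummer extension generated by a root of $\pi_v$, which is where the structure of tamely ramified extensions enters. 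In the paper I would cite a standard reference for this, such as Serre's \emph{Local Fields} or Neukirch, rather than reproving it.
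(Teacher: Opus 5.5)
Your argument is correct and is the standard proof: you show $k_v^{\mathrm{tame}}=\bigcup_{\gcd(n,q_v)=1}k_v^{\mathrm{nr}}(\pi_v^{1/n})$, apply Kummer theory at each finite level using $\mu_n\subset k_v^{\mathrm{nr}}$, pass to the inverse limit, and get independence of $\pi_v$ from the fact that units of $k_v$ have prime-to-$p$ roots in $k_v^{\mathrm{nr}}$. The paper gives no argument of its own and only cites the Stacks Project (Tag 09EE); that proof runs along the same lines, except it is phrased as a map from inertia onto $\varprojlim \mu_n$ whose kernel is the wild inertia, i.e.\ the inverse of your $\theta$.
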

\begin{proof}
 See \cite[\href{https://stacks.math.columbia.edu/tag/09EE}{Tag 09EE}]{stacks-project} and its proof.
\end{proof}

Recall the definition of \emph{good places} from Definition \ref{def:unramified}

\begin{definition} \label{def:ramification_type}
	Let $v$ be a good place. The \textit{ramification type} is the map
	$$\rho_{G,v}: BG(k_v) \to \mathcal{C}_G^{\Gamma_{k_v}},  \quad \varphi_v \mapsto \varphi_v|_{I_v^\mathrm{tame}}.$$	
\end{definition}

Let us describe in more detail this map via the exact sequence
$$0 \to I_v \to \Gamma_{k_v} \to \Gamma_{\F_v} \to 0.$$
Assume first for simplicity that $G$ has trivial Galois action. We restrict $\varphi_v$ to $I_v \subset \Gamma_{k_v}$. As $v$ is tame for $G$, the induced map factors through $I_v^{\mathrm{tame}}$. But by Lemma \ref{lem:tame_inertia} this is canonically isomorphic to a group of roots of unity, thus we obtain an element of $G(-1)$. This is only well-defined up to conjugacy, hence we obtain an element of $\mathcal{C}_G:=G(-1)/\mathrm{conj}$. For general $G$, since $v$ is unramified the inertia group $I_v$ acts trivially on $G$. In which case the cocycle condition implies that the restriction of $\varphi_v$ to $I_v$ is actually a homomorphism; the verification is then analogous.

We next explain why the image of $\rho_{G,v}$ is contained in the Galois invariant part of $\mathcal{C}_G$. Here $\rho_{G,v}$ is essentially the restriction map $\H^1(\Gamma_{k_v},G) \to \H^1(I_v,G)$ on the level of cohomology. Then it is a standard fact in cohomology that the image of the restriction map is contained in $\H^1(I_v,G)^{\Gamma_{k_v}}$; if $G$ is abelian this is built into the inflation-restriction exact sequence \cite[\S 2.6(b)]{Ser02}, whereas for non-abelian $G$ there is an analogous result \cite[\S I.5.8(a)]{Ser02}.




\subsubsection{Categorification} \label{sec:categorification}
We next construct a categorified version of the ramification type which allows for more general applications and a more flexible theory (this can be skipped at first reading). To do so we need to do some more refined Galois theory.

Let $\Gamma_{k_v}^{\mathrm{tame}}$ be the tame Galois group of $k_v$ and let $\Frob_v \in \Gamma_{\F_v}$ be the Frobenius element.

\begin{lemma}\label{lem:lift_of_frobenius}
	Fix a uniformizer $\pi_v$ of $k_v$ and a compatible system of roots $\pi_v^{1/n} \in k_v^{\mathrm{sep}}$ for all $(n, q_v) = 1$. There exists a unique lift of Frobenius $F \in \Gamma_{k_v}^{\mathrm{tame}}$ which fixes all $\pi_v^{1/n}$. We then have $\theta(\Frob_v(\zeta)) = F \theta(\zeta) F^{-1}$ for all $\zeta \in \varprojlim\limits_{\mathclap{\gcd(q_v,n) = 1}} \,\,\mu_n$.
\end{lemma}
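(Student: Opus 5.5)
The plan is to describe $\Gamma_{k_v}^{\mathrm{tame}}$ as the Galois group of the compositum of the maximal unramified extension with the tamely ramified Kummer tower. Let $k_v^{\mathrm{nr}}$ be the maximal unramified extension. By Kummer theory, or equivalently the proof of Lemma~\ref{lem:tame_inertia}, the maximal tamely ramified extension is
\[ k_v^{\mathrm{tame}} = k_v^{\mathrm{nr}}\bigl(\pi_v^{1/n} : \gcd(n,q_v)=1\bigr). \]
Put $L := k_v(\pi_v^{1/n} : \gcd(n,q_v)=1)$, using the fixed compatible system of roots. Then $L/k_v$ is totally ramified, since each $k_v(\pi_v^{1/n})/k_v$ is totally ramified of degree $n$ by Eisenstein. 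Hence $L \cap k_v^{\mathrm{nr}} = k_v$, and $L\cdot k_v^{\mathrm{nr}} = k_v^{\mathrm{tame}}$.

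\textbf{Existence and uniqueness of $F$.} An element of $\Gamma_{k_v}^{\mathrm{tame}}$ fixing every $\pi_v^{1/n}$ is the same thing as an element of $\Gal(k_v^{\mathrm{tame}}/L)$. Restriction gives a map
\[ \Gal(k_v^{\mathrm{tame}}/L) \to \Gal(k_v^{\mathrm{nr}}/k_v) \cong \Gamma_{\F_v}. \]
This map is injective, because $k_v^{\mathrm{tame}}$ is generated by $L$ and $k_v^{\mathrm{nr}}$. It is surjective by standard Galois theory of linearly disjoint extensions, since $L\cap k_v^{\mathrm{nr}}=k_v$. The one point to check is that $L/k_v$ is not Galois, so the standard statement $\Gal(LM/L)\cong \Gal(M/M\cap L)$ must be applied with $M = k_v^{\mathrm{nr}}$ Galois. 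It holds for arbitrary $L$ when $M$ is Galois, including infinite extensions, via compactness. Therefore $\Frob_v$ has a unique preimage $F$, which is exactly the unique lift of Frobenius fixing all $\pi_v^{1/n}$.

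\textbf{The conjugation formula.} Let $\zeta = (\zeta_n)_n$. It suffices to compare both sides of the identity on the generators $\pi_v^{1/n}$, since both sides lie in $I_v^{\mathrm{tame}}$ and therefore act trivially on $k_v^{\mathrm{nr}}$. Since $F^{-1}$ fixes $\pi_v^{1/n}$, we get
\[ F\theta(\zeta)F^{-1}(\pi_v^{1/n}) = F(\zeta_n\pi_v^{1/n}) = F(\zeta_n)\,\pi_v^{1/n}. \]
Because $\gcd(n,q_v)=1$, we have $\zeta_n \in k_v^{\mathrm{nr}}$, so $F(\zeta_n)=\Frob_v(\zeta_n)=\zeta_n^{q_v}$. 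On the other side, $\theta(\Frob_v(\zeta))$ sends $\pi_v^{1/n}$ to $\zeta_n^{q_v}\pi_v^{1/n}$ by the definition of $\theta$ in Lemma~\ref{lem:tame_inertia}, where $\Gamma_{\F_v}$ acts on roots of unity through the residue field. The two values agree, which gives $\theta(\Frob_v(\zeta)) = F\theta(\zeta)F^{-1}$.

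The main obstacle is the justification of the description of $k_v^{\mathrm{tame}}$ and the linear-disjointness step in the infinite setting. I expect this to reduce to the Stacks Project reference already cited for Lemma~\ref{lem:tame_inertia}, together with total ramification of the Kummer tower. Everything else is direct computation.
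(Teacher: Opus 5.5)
Your proposal is correct and takes essentially the same route as the paper. The paper also identifies $\Gal(k_v^{\mathrm{tame}}/k_v(\pi_v^{1/\infty}))$ with $\Gal(k_v^{\mathrm{nr}}/k_v)$, using that the Kummer tower meets $k_v^{\mathrm{nr}}$ trivially and that this group meets $I_v^{\mathrm{tame}}$ trivially (equivalent to your compositum statement $L\cdot k_v^{\mathrm{nr}}=k_v^{\mathrm{tame}}$), and then checks the conjugation formula on the generators $\pi_v^{1/n}$; your explicit justification of surjectivity of restriction when $L/k_v$ is not Galois fills in a detail the paper leaves to ``the Galois correspondence''.
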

\begin{proof}
	Let $k_v^{\mathrm{tame}} \subset \bar{k}_v$ be the maximal tamely ramified extension of $k_v$, so that $\Gal(k_v^{\mathrm{tame}}/k_v) \cong \Gamma_{k_v}^{\mathrm{tame}}$. Let $k_v(\pi_v^{1/\infty}) := \bigcup_{(q_v, n) = 1} k_v(\pi_v^{1/n})$. We have $k_v^{\mathrm{nr}} \cap k_v(\pi_v^{1/\infty}) = k_v$ as it is the intersection of a totally ramified and an unramified extension. On the other hand $\Gal(k_v^{\mathrm{tame}}/k_v(\pi_v^{1/\infty})) \cap I_v^{\mathrm{tame}} = 1$ by Lemma \ref{lem:tame_inertia}.
	
	It follows from these two facts and the Galois correspondence that the composition $\Gal(k_v^{\mathrm{tame}}/k_v(\pi_v^{1/\infty})) \subset \Gamma_{k_v}^{\mathrm{tame}} \to \Gamma_{k_v}^{\mathrm{nr}}$ is an isomorphism. We may take $F$ a lift of Frobenius under this isomorphism.
	
	To show the last equality we write $\zeta = (\zeta_n)_n$ and compute that $(F \theta(\zeta) F^{-1})(\pi_v^{1/n}) = \zeta_n^{q_v} \pi_v^{1/n} = \theta(\zeta_n^{q_v})(\pi_v^{1/n})$ for all $(q_v, n) = 1$, which shows the equality.
\end{proof}
This lemma naturally leads to the following groupoid.
\begin{definition}
	Assume that $G$ is unramified and tame at $v$, we may thus think of it as a $\Gamma_{\F_v}$-group. Define the groupoid $[G(-1)/G](\F_v)$ as follows.
	\begin{itemize}
		\item It has as objects pairs $(\gamma, F)$ where $\gamma \in G(-1)$, $F \in G$ and $\gamma = F \cdot \Frob_v(\gamma) \cdot F^{-1}$. 
		\item Morphisms are given by conjugation with elements of $G$. More concretely, for every element $g \in G$ we get a morphism $(\gamma, F) \to (g^{-1} \gamma g, g^{-1} F \Frob_v(g))$.
	\end{itemize}
\end{definition}

This groupoid is a quotient stack with respect to the conjugation action of $G$ on $G(-1)$.

\begin{definition}
	Fix a uniformizer $\pi_v$ and for each $(q_v, n)$ let $\pi_v^{1/n}$ be a compatible system of roots. Let $F \in \Gamma_{k_v}^{\mathrm{tame}}$ be the lift of Frobenius constructed in Lemma \ref{lem:lift_of_frobenius}. We define the reduction modulo $\pi_v$ map as the map of groupoids
	\[
	\pmod{\pi_v}: BG(k_v) \to [G(-1)/G](\F_v), \quad  \varphi \to (\varphi \circ \theta, \varphi(F)).
	\]
\end{definition}

This is a categorification of the ramification type in the following sense: There is a commutative diagram
\[
\xymatrix{BG(k_v)  \ar[rr]^{\bmod{\pi_v}} \ar[drr]^{\rho_{G,v}} & & [G(-1)/G](\F_v) \ar[d] \\ 
& & \mathcal{C}_G^{\Gamma_{k_v}}
}
\]
where the right hand map is $(\gamma,F) \mapsto \gamma$. To see that this map is well-defined, we note that the relation $\gamma = F \cdot \Frob_v(\gamma) \cdot F^{-1}$ implies that $\gamma$ is conjugate to its Galois conjugates

Our main result on this from \cite[\S4.6]{LS24} is the following, which we view as a stacky version of Hensel's Lemma.

\begin{theorem}[Stacky Hensel's Lemma]\label{thm:hensel}
	The map $BG(k_v) \to [G(-1)/G](\F_v)$ is well-defined and an equivalence of groupoids.
\end{theorem}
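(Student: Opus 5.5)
The plan is to reduce everything to the tame quotient and then describe cocycles on the tame Galois group explicitly through its standard presentation. Since $v$ is good, $|G|$ is prime to $q_v$ and the inertia group $I_v$ acts trivially on $G$. Any $\varphi \in Z^1(k_v,G)$ restricts to a homomorphism on $I_v$ with image a group of order prime to the residue characteristic. This restriction is trivial on the wild inertia group, which is pro-$p$. The cocycle relation then shows $\varphi$ is constant on cosets of wild inertia, so it factors through $\Gamma_{k_v}^{\mathrm{tame}}$. Since conjugation is the same in both settings, inflation gives an isomorphism of groupoids between cocycles $\Gamma_{k_v}^{\mathrm{tame}} \to G$ and $BG(k_v)$. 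This reduction is the first step.

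Next, I would use Lemma \ref{lem:tame_inertia} and Lemma \ref{lem:lift_of_frobenius} to write $\Gamma_{k_v}^{\mathrm{tame}} = I_v^{\mathrm{tame}} \rtimes \overline{\langle F\rangle}$. Here $I_v^{\mathrm{tame}} = \theta(\varprojlim \mu_n)$ is abelian, $\overline{\langle F \rangle} \cong \hat{\Z}$ maps isomorphically onto $\Gamma_{k_v}^{\mathrm{nr}}$, and the action is $F\theta(\zeta)F^{-1} = \theta(\Frob_v(\zeta))$.

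Well-definedness of the map is then a direct computation. First, $\gamma := \varphi\circ\theta$ is a continuous homomorphism $\varprojlim_{(n,q_v)=1}\mu_n \to G$; as $|G|$ is prime to $q_v$, it is an element of $G(-1)$. Second, apply the cocycle identity to $F\theta(\zeta) = \theta(\Frob_v\zeta)F$, using that $\theta(\zeta)$ acts trivially on $G$ and that $F$ acts on $G$ via $\Frob_v$. Comparing both sides yields $\varphi(F)\,\Frob_v(\gamma(\zeta)) = \gamma(\Frob_v\zeta)\,\varphi(F)$. In the twisted action on $G(-1)$ from Definition \ref{def:G(-1)}, this is exactly $\gamma = F\cdot\Frob_v(\gamma)\cdot F^{-1}$ with $F=\varphi(F)$. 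Finally, I would check that conjugating $\varphi$ by $g$ sends $(\gamma,\varphi(F))$ to $(g^{-1}\gamma g, g^{-1}\varphi(F)\Frob_v(g))$, so the construction is a functor.

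For the equivalence, I would prove full faithfulness and essential surjectivity (in fact, bijectivity on objects). Faithfulness and fullness are immediate, since morphisms on both sides are elements $g \in G$ acting by the same formulas. For objects, injectivity holds because a cocycle on $\Gamma_{k_v}^{\mathrm{tame}}$ is determined by its values on the topological generators $\theta(\zeta)$ and $F$: these values fix it on a dense subgroup via the cocycle law, and continuity does the rest.

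Surjectivity is the main obstacle. Given $(\gamma, F_0)$ with the compatibility relation, I would define $\varphi(\theta(\zeta)F^m) := \gamma(\zeta)\, N_m(F_0)$. Here $N_m(F_0)$ is the unramified cocycle attached to $F_0$ by Lemma \ref{lem:unramified_cocycle}, extended to $m \in \hat{\Z}$. The cocycle identity for products $\theta(\zeta)F^m\cdot\theta(\zeta')F^{m'}$ then reduces to $N_m(F_0)\,\Frob_v^m(\gamma(\zeta')) = \gamma(\Frob_v^m\zeta')\,N_m(F_0)$. This follows by induction on $m$ from the relation for $m=1$, and then extends to $\hat{\Z}$ by continuity. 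The care needed is in checking that the formula is continuous and well defined on the profinite semidirect product. I would handle this by noting that $\varphi$ factors through a finite quotient: take $\mu_e$ with $e=\exp(G)$, and $F^{M}$ for $M$ a period of $N_\bullet(F_0)$ that is also a multiple of the order of $q_v$ modulo $e$.
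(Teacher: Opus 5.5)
Your proof is correct. The well-definedness part coincides with the paper's: apply the cocycle law to $F\theta(\zeta)F^{-1}=\theta(\Frob_v(\zeta))$ from Lemma \ref{lem:lift_of_frobenius}.

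The routes differ for the equivalence. The paper gives no argument of its own here. It refers to \cite[Thm.~4.13]{LS24}, a stack-theoretic result, and uses \cite[Lem.~4.12]{LS24} only to identify the map there with the reduction map. You prove the equivalence by hand in three steps:
\begin{enumerate}
\item Reduce to $\Gamma_{k_v}^{\mathrm{tame}}$, using that wild inertia is pro-$p$ and $p\nmid |G|$.
\item Use the decomposition $\Gamma_{k_v}^{\mathrm{tame}}=I_v^{\mathrm{tame}}\rtimes\overline{\langle F\rangle}$ supplied by Lemmas \ref{lem:tame_inertia} and \ref{lem:lift_of_frobenius}.
\item Show that the object map is a $G$-equivariant bijection, with inverse $\theta(\zeta)F^m\mapsto\gamma(\zeta)N_m(F_0)$.
\end{enumerate}
The key identity $\gamma(\Frob_v^m\zeta')N_m(F_0)=N_m(F_0)\Frob_v^m(\gamma(\zeta'))$ does follow from the $m=1$ relation by induction. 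It extends from $\Z_{\geq 0}$ to $\hat{\Z}$ because both sides are locally constant in $m$. Continuity of the glued map only needs the map to factor through a finite quotient as a map of sets, so your extra care in choosing $M$ is harmless but not required.

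What your approach buys is an elementary, self-contained proof that makes all choices explicit. It fits the survey's stated aim of avoiding stacks, and it even yields an isomorphism, not just an equivalence, of action groupoids. The citation to \cite{LS24} buys generality and a conceptual setting: there the statement sits inside the stack-theoretic framework that the paper later relies on for the mass formula and residues.

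One small point of ordering. Fullness is not quite ``immediate'' from the morphisms being the same elements $g$. Given $g$ with $\mathrm{red}(\varphi)^g=\mathrm{red}(\psi)$, you still need $\varphi^g=\psi$. This follows from equivariance together with the injectivity on objects that you prove afterwards. So it is cleanest to conclude that a $G$-equivariant bijection of right $G$-sets induces an isomorphism of the corresponding action groupoids.
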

\begin{proof}
	To see that it is well-defined one applies the cocycle law for $\varphi$ to the equality $\theta(\Frob_v(\cdot)) = F \theta(\cdot) F^{-1}$ of Lemma \ref{lem:lift_of_frobenius}.
	
	For the equivalence of groupoids we refer to \cite[Thm.~4.13]{LS24} and \cite[Lem.~4.12]{LS24} to see that this theorem is about this map. 
\end{proof}

\subsection{Weight functions}

\begin{definition} \label{def:weight_function}
A \textit{weight function} (over $k$) is a function $w: \mathcal{C}_G \to \Z$ such that 
\begin{enumerate}
	\item $w$ is invariant under the action of $\Gamma_k$.
	\item $w(e) = 0$,	
\end{enumerate} 
Here $e$ denotes the identity map of $G(-1)$. 
\end{definition}

Note that weight functions form a finitely generated free abelian group under addition. In analogy with Manin's conjecture, weight functions play the role of line bundles.

In \cite[\S8.1]{LS24} we use a slightly more general framework for heights based on \textit{orbifold line bundles}. In this survey we take a slightly simplified perspective using weight functions only, which is sufficient for almost all applications, since they form a finite index subgroup of the full orbifold Picard group.

\begin{example}
	If $w$ is invariant under the invertible powering action, 
	i.e.~$w(c) = w(c^q)$ for all $q \nmid |G|$ and all $c \in \mathcal{C}_G$,
	then we can view $w$ as a function 
	on the anticyclotomic twist of $G$ (see Lemma \ref{lem:Galois_action_on_G(-1)}).
	This allows for easier computations and easier to write down $w$ via elements of $G$,
	instead of $G(-1)$.
	
	For $G$ with trivial Galois action over $\Q$, the Galois invariance condition says exactly that $w$ is invariant under the invertible powering action. 
	But over other number fields the condition can be more general.
\end{example}

\subsection{Height functions}
With heights on varieties there are many heights with given line bundle. We also allow many heights with given weight function. This additional flexibility is very useful for the theory, particularly when we come to study equidistribution.   Not least because there is no canonical choice of local height at the wild, archimedean, or other bad places. We let $G$ be a $\Gamma_k$-group over a number field $k$.

\begin{definition} \label{def:heights}
A \textit{local height function} at a place $v$ is a map 
$$H_v: BG(k_v) \to \R_{> 0},$$
which is constant on isomorphism classes of objects. An \emph{adelic height} associated to a weight function $w$ is a collection $H=(H_v)_v$ of local heights for all places $v$ of $k$ such that for all but finitely many good places $v$ we have
\begin{equation} \label{eqn:tame_height}
H_v(\varphi_v) = q_v^{w(\rho_{G,v}(\varphi_v))} \quad \text{ for all } \varphi_v \in BG(k_v),
\end{equation}
where $\rho_{G,v}$ denotes the ramification type at $v$ from \S \ref{sec:ramification_type}. Inspired by the formula \eqref{def:naive_height}, the (global) height of $\varphi \in BG(k)$ is then defined to be the product
\begin{equation} \label{def:product_local_heights}
H: BG(k) \to \R_{>0}, \quad \varphi \mapsto \prod_v H_v(\varphi)
\end{equation}
over all local heights, where we consider the image of $\varphi$ under the map $BG(k) \to \prod_v BG(k_v)$. For simplicity we often call an adelic height simply a height. 
\end{definition}

\begin{remark}
	If $G$ has trivial Galois action, then $BG[k] \to \prod_v BG[k_v]$ is injective since any Galois
	extension is uniquely determined by its completely split primes. However it fails to be injective
	in general, for example if $\Sha^1(k,G)$ is non-trivial.
\end{remark}

\begin{example}[Artin conductors]\label{ex:Artin_conductors}
	Let $G$ be a finite group acting on a finite dimensional vector space $V$ over $\C$.
	Then the Artin conductor associated to this is a height function with associated weight function
	$$w:G \mapsto \Z, \quad g \mapsto \mathrm{codim}(V^g).$$
	To see this, the $v$-adic valuation of the Artin conductor at a tame place $v$ is exactly 
	$\chi(1) - \chi(I)$, where $\chi(I)$ denotes the average value of the associated 
	character $\chi$ over all elements of the inertia group $I$. By character orthogonality this is $\dim(V) - \dim(V^I)$. The claim follows as $I$ is cyclic  by Lemma \ref{lem:tame_inertia}.
\end{example}

\begin{example}[Discriminants] \label{ex:discriminant}
	Let $G \subset S_n$ be a transitive subgroup. Recall from Lemma \ref{lem:S_n} that a general
	element of $BG(k)$ corresponds to a number field $K/k$ of degree $n$ whose Galois closure 
	has Galois group $G$. We explain how the discriminant $\Delta_{K/k}$ can be viewed
	as a height function on $BG(k)$ with associated weight function the \emph{index:}
	$$\ind: S_n \to \R, \quad g \mapsto n - \text{ the number of orbits of $g$ on }\{1,\dots,n\}.$$
	It suffices to realise the discriminant as a conductor via Example \ref{ex:Artin_conductors}. We claim that the discriminant is equal to the conductor of the permutation representation $V$ induced by the embedding of $G \subset S_n$. For $g \in G$ the codimension $V^g$ is equal to $n$ minus the number of cycles of $g$ as a permutation; this is exactly $\ind(g)$.
	
	We provide a careful argument for the equality of the discriminant and this Artin conductor, using a suitable version of the conductor-discriminant formula, as we were unable to locate one in the literature. Note that particular care is required since Example \ref{ex:Artin_conductors} is for Galois extensions but we are considering non-Galois extensions.
	
	The degree $n$ extension corresponding to a Galois $G$-extension for a transitive subgroup $G \subset S_n$ corresponds under the Galois correspondence to the subgroup $H := G \cap S_{n-1} \subset G$. The compatibility of induced representations with Artin conductors in \cite[Cor.~VII.11.8]{Neu99} then implies that the discriminant of this extension is given by the Artin conductor of the induced representation $\mathrm{Ind}_{H}^G \mathbf{1}$, where $\mathbf{1}$ denote the trivial irreducible representation. This is by definition the same as the permutation representation coming from $G \subset S_n$. This implies the claim.
\end{example}

Naturally with Definition \ref{def:heights}, there is no guarantee that there are only finitely many elements of bounded height (the Northcott property). To ensure this, one usually considers \emph{big heights}.

\begin{definition}
	A height is called \emph{big} if the associated weight function takes positive values on all non-identity elements.
\end{definition}

\begin{lemma}
	Every big height satisfies the Northcott property.
\end{lemma}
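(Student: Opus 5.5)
We need to show that for a big height $H$ and any $B>0$, the set $\{\varphi \in BG[k] : H(\varphi)\le B\}$ is finite. The plan is to reduce this to Hermite's theorem that there are only finitely many extensions of $k$ of bounded degree unramified outside a fixed finite set of places.

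\textbf{Step 1: bound the local heights at the finitely many exceptional places.} Let $S$ be the finite set consisting of the bad places, together with the good places where \eqref{eqn:tame_height} fails. For $v\in S$, the set $BG[k_v]=\H^1(k_v,G)$ is finite: $k_v$ has only finitely many extensions of degree at most $|G|$, and $G$ is finite. Since $H_v$ is constant on isomorphism classes, it takes only finitely many values. Hence there is a constant $c>0$ with $H_v\ge c$ for all $v\in S$. This gives
\[
\prod_{v\notin S} H_v(\varphi)\le c^{-|S|}B.
\]

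\textbf{Step 2: use bigness outside $S$.} For $v\notin S$ we have $H_v(\varphi)=q_v^{w(\rho_{G,v}(\varphi_v))}$. Because $w$ is big, $w\ge 0$, with $w\ge 1$ on every non-identity class. So every factor is at least $1$. Moreover, the factor is at least $q_v\ge 2$ exactly when the restriction of $\varphi_v$ to $I_v^{\mathrm{tame}}$ is non-trivial. At a good place, $|G|$ is coprime to $q_v$, so the restriction to wild inertia is already trivial. Therefore that restriction is non-trivial if and only if $\varphi$ is ramified at $v$, i.e.\ $\varphi|_{I_v}$ is non-trivial.

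Let $T$ be the set of places outside $S$ at which $\varphi$ ramifies. Then $\prod_{v\in T} q_v\le c^{-|S|}B$. Only finitely many places satisfy $q_v\le c^{-|S|}B$, so $T$ is contained in a finite set $S'$ depending only on $B$. Consequently every $\varphi$ of height at most $B$ is unramified outside $S\cup S'$.

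\textbf{Step 3: finiteness via Hermite.} Let $K/k$ be the finite Galois extension trivialising the $\Gamma_k$-action on $G$. Enlarging $S$ if necessary, we may assume $K/k$ is unramified outside $S$.

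A cocycle $\varphi$ restricts to a homomorphism $\Gamma_K\to G$. Its kernel cuts out a Galois extension $L/K$ of degree at most $|G|$. The extension $L/k$ is Galois: the kernel is normal in $\Gamma_k$ by the cocycle identity, since the action is trivial on $\Gamma_K$. It has degree at most $|G|[K:k]$ and is unramified outside $S\cup S'$. By Hermite--Minkowski there are finitely many such $L$.

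For a fixed $L$, the cocycle $\varphi$ factors through the finite group $\Gal(L/k)$. There are therefore finitely many continuous cocycles, namely at most $|G|^{|\Gal(L/k)|}$. Hence the set of $\varphi$ with $H(\varphi)\le B$ is finite, and so is its image in $BG[k]$.

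The only delicate point is Step 2: identifying ramification at a good place with non-triviality of the ramification type. This relies on tameness at good places. The argument also has to note that $BG(k)\to\prod_v BG(k_v)$ need not be injective; this causes no problem, since the count is carried out on cocycles directly.
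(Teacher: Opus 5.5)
Your proof is correct and follows the same route as the paper's general-case argument: bigness forces every $\varphi$ of bounded height to be unramified outside a fixed finite set of places, and only finitely many classes remain after that. Where the paper cites \cite[Prop.~5.1]{GMB13} for the final finiteness step, you prove it directly by applying Hermite--Minkowski to the Galois extension $L/k$ cut out by $\varphi|_{\Gamma_K}$ (your normality check on $\ker \varphi|_{\Gamma_K}$ is correct), and this also makes the paper's separate discriminant comparison $H(\varphi) > \Delta(\varphi)^{\varepsilon}$ in the trivial-action case unnecessary.
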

\begin{proof}
	First assume that the Galois action on $G$ is trivial. Let $\Delta$ be the height function given by the discriminant of the corresponding field. Then there exists $\varepsilon > 0$ such that $H(\varphi) > \Delta(\varphi)^{\varepsilon}$ for all $\varphi \in BG(k)$. Hence bounding the height bounds the discriminant. Thus Hermite--Minkowski implies there are finitely many of bounded height.
	
	For general $G$, given $H$ and $B> 0$ there is a finite set of places $S$ such if $H(\varphi) < B$ then $\varphi \in BG[\O_v]$ for all $v \notin S$, i.e. the cocycle is unramified outside of $S$. 
	However there are only finitely many such cohomology classes by \cite[Prop.~5.1]{GMB13}.
\end{proof}

Recall from \eqref{def:product_local_heights} that the height is given as a product of local heights. The counting function is naturally controlled by cocycles of small height. To make the height small we need to make the $v$-adic valuations of the local heights small; this means exactly taking the ramification type to have minimal weight. Therefore the minimal weight conjugacy classes play a crucial role to understanding the size of the height function and the distribution of cocycles of bounded height; they will frequently appear in our work.

\begin{definition} \label{def:minimal_weight}
	Let $H$ be a height function with weight function $w$. 
	The \emph{minimal weight conjugacy classes} of $w$ is the collection
	$$\mathcal{M}(H):=\{ c \in \mathcal{C}_G^* : w(c) \text{ is minimal} \}.$$
\end{definition}

\subsection{Balanced heights} \label{sec:balanced}
Wood \cite{Woo10} was the first to notice that the discriminant can sometimes exhibit pathological properties. For example when ordering cyclic quartic fields by discriminant, a positive proportion of all cyclic quartic fields contain a given quadratic subfield (providing there is at least one such field). This wreaks havoc with the leading constant and it loses many formal properties one would like.

To rectify this, Wood introduced the notion of \textit{fair heights} in the case of finite abelian groups. These heights have the property that when counting, the leading constant is given by a finite sum of Euler products \cite[Thm.~3.1]{Woo10}.

It has been known for a long time in the Manin's conjecture world what the ``nice'' class of heights ought to be. As explained in \S \ref{sec:leading_constant_Manin} these are of those associated to an adjoint rigid line bundle. In \cite{LS24} we applied this philosophy to the stack $BG$ and obtained a class of height functions which is closely related to Wood's fair heights, but slightly more general and can also be defined in the non-abelian case. For such heights the expected leading constant has good measure-theoretic properties, provided one removes a thin set of possible bad field extensions, and can also be written as a finite sum of Euler products as in Wood's paper (see Conjecture \ref{conj:balanced}).

\begin{definition} \label{def:balanced}
	A height function $H$ is called \emph{balanced} if the collection of minimal weight conjugacy classes $\mathcal{M}(H)$ generates $G$.
\end{definition}

We use the terminology \emph{balanced} as it softer than adjoint rigid, and also seems better suited and more descriptive in the setting of Malle's conjecture.


\begin{example}[Anticanonical heights] \label{ex:radical_discriminant}
	We define the \emph{anticanonical weight function} to be $w(c) = 1$ for all non-identity $c \in \mathcal{C}_G$.
	An \emph{anticanonical height function} is any height function associated to this weight function.
	For example the radical discriminant
	$$\rad \Delta_K := \prod_{\mathfrak{p} \mid \Delta_K} \Norm(\mathfrak{p})$$
	is an anticanonical height function. In many ways this is ``the best'' height function as it weights all field extensions in the fairest and most equal way.
	
	To justify the terminology from an arithmetic perspective, we remark in Manin's conjecture we saw in \S \ref{sec:anticanonical} that anticanonical height functions are those which give the largest predicted power of $\log B$. The same is true here (see Conjecture \ref{conj:balanced}).
\end{example}

\begin{example}[The discriminant]
	Let $G \subset S_n$ be a transitive subgroup. Consider the discriminant height $H$, as in Example \ref{ex:discriminant}. Then $\mathcal{M}(H)$ consists of the conjugacy classes of minimal index. For $G=S_n$ these are exactly the transpositions, thus the discriminant is balanced for $S_n$ since the transpositions generate $S_n$.
	
	But if $G \subset S_{|G|}$ is abelian, then the discriminant is balanced if and only if $G = C_p^m$ with $p$ prime. In fact it seems that the discriminant is very rarely balanced as one varies over finite transitive permutation groups.
\end{example}

\section{Tamagawa measures} \label{sec:Tamagawa}
In arithmetic geometry, Tamagawa measures are measures defined on the adelic points $X(\Adele_k)$ of some variety $X$ over a number field $k$. One first defines local Tamagawa measures at each place $v$, then to ensure the convergence of the product one needs to come up with \emph{convergence factors}, which typically come from the special values of some local $\L$-functions. Weil introduced these for algebraic groups \cite{Wei82}, leading to his famous Tamagawa number conjecture. They also appear for abelian varieties in the Birch and Swinnerton-Dyer conjecture.

One of Peyre's key observations \cite[\S 2.2.1]{Pey95} was that to define Tamagawa measures on Fano varieties, one needs to choose a height function first (or more specifically, an \textit{adelically metrised line bundle}). The difference with algebraic groups is that the tangent bundle is canonically isomorphic to the trivial vector bundle which leads to a canonical choice of metric, but on Fano varieties there is no such canonical choice. From a more down-to-earth perspective, it is clear that the leading constant in the asymptotic formula should depend on the choice of height function, and one would like an adelic interpretation of this leading constant. Chambert-Loir and Tschinkel subsequently \cite{CT10} came up with an approach which unifies both Peyre's and Weil's.

\subsection{Topologies on adelic points} \label{sec:topology}
Before we define a measure, we need to make clear which topological space we are working on. It turns out that the choice of topological space to take depends on the choice of height function. This perspective  also appears in Manin's conjecture \eqref{eqn:adjoint_rigid}, where one takes the adelic points on an open subvariety.

Firstly $BG[k_v]$ is finite so we equip it with the discrete topology. There are various choices for restricted direct products of these sets, given in the following definition.

\begin{definition} \label{def:adelic_space}
	Let $\mathcal{C}\subseteq \mathcal{C}_G^*$ be Galois invariant.
	Then for a good place $v$  we define 
	\begin{align*}
	BG(\O_v)_{\mathcal{C}} &= \{ \varphi_v \in BG(k_v) : \rho_{G,v}(\varphi_v) \in \mathcal{C} \cup \{e\}\}, \\
	BG(\Adele_k)_{\mathcal{C}} & = \lim_{\xrightarrow[S]{}} \prod_{v \in S} BG(k_v) 
	\prod_{v \notin S} BG(\O_v)_{\mathcal{C}},
	\end{align*}
	where the limit is over all finite sets of places $S$ containing the bad places, so that $BG(\Adele_k)_{\mathcal{C}}$
	is the restricted direct product of the $BG(k_v)$ with respect to 	$BG(\O_v)_{\mathcal{C}}$.
	We call this the \emph{partial adelic space} with respect to $\mathcal{C}$
	and  an element of $BG(\O_v)_{\mathcal{C}}$ a \textit{partial $v$-adic integral point} with respect to 
	$\mathcal{C}$.
\end{definition}

An element of $BG(\O_v)_{\mathcal{C}}$ exactly corresponds to a cocycle whose ramification type is restricted to lie in $\mathcal{C}$. It follows easily from Definition \ref{def:unramified} that $BG(\O_v) = BG(\O_v)_{\emptyset}$, since a cocycle is unramified if and only if its ramification type is trivial.

The space $BG(\Adele_k)_{\mathcal{C}}$ is locally compact. It is compact if and only if
$\mathcal{C} = \mathcal{C}_G^*$. 

\subsection{Local Tamagawa measures}
Fix an adelic height $(H_v)_v$ with corresponding weight function $w$ and write $a(H) := \left(\min_{c \in \mathcal{C}_G^*}w(c)\right)^{-1}$.

We now apply Peyre's formalism \cite{Pey95} for Tamagawa measures to the stack $BG$. Thankfully the formulae which arise exactly agree with the existing notions which appear in the Malle's conjecture literature; for example Kedlaya's ``total mass'' from \cite[Def~2.2, (2.3.1)]{Ked07}. This leads to the following definition.

\begin{definition} \label{def:local_Tamagawa_measure}
	Let $v$ be a place of $k$ and $W_v \subseteq BG[k_v]$. We define the \textit{Tamagawa measure}
	$\tau_{H,v}$ associated to our choice of adelic height to be
	$$\tau_{H,v}(W_v) = \sum_{\varphi_v \in [W_v]} \frac{1}{|\Aut(\varphi_v)| H_v(\varphi_v)^{a(H)}},$$
	i.e.~the integral of $H_v(\varphi_v)^{-a(H)}$ with respect to the groupoid cardinality. 
\end{definition}
This sum is finite as there are only finitely many extensions of $k_v$ of given degree. This determines a well-defined measure on the set $BG[k_v]$ of isomorphism classes of $k_v$-points of $BG$.

Lemma \ref{lem:groupoid_count} allows one to rewrite this in terms of a count over $1$-cocycles with a different weighting. Namely if $F: Z^1(k_v,G) \to BG(k_v)$ denotes the natural map then
\begin{equation} \label{eqn:tau_alternative}
	\tau_{H,v}(W_v) = \frac{1}{|G|}\sum_{\varphi_v \in F^{-1}(W_v)} \frac{1}{H_v(F(\varphi_v))^{a(H)}}.
\end{equation}

There is a formula for the local Tamagawa measure of $BG[k_v]$ away from finitely many places, as well as the more general partial local points considered in Definition~\ref{def:adelic_space}. Having such a formula is crucial to show convergence of the global Tamagawa measure. Inspired by Bhargava's paper \cite{Bha07}, we call such a formula a \emph{mass formula}. We call a place \emph{good} with respect to $(G,H)$ if it is good with respect to $G$ and if the formula \eqref{eqn:tame_height} holds at $v$.

\begin{theorem}[Mass formula] \label{thm:mass_formula}
	Let $\mathcal{C} \subset \mathcal{C}_G^*$ be a Galois invariant subset. Then for good places $v$ we have
 	\begin{align*}
		\tau_{H, v}(BG(k_v))= \sum_{c \in \mathcal{C}_G^{\Gamma_{k_v}}} q_v^{-w(c)a(H)}, \quad
		\tau_{H, v}(BG(\O_{v})_{\mathcal{C}}) = 1 +
		\sum_{c \in \mathcal{C}^{\Gamma_{k_v}}} q_v^{-w(c)a(H)}.		
	\end{align*}
\end{theorem}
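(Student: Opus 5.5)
Both formulas reduce to counting with groupoid cardinality through the Stacky Hensel's Lemma (Theorem \ref{thm:hensel}). At a good place $v$, formula \eqref{eqn:tame_height} makes $H_v(\varphi_v)$ depend only on $\rho_{G,v}(\varphi_v)$. Hence for any subset $W_v$ cut out by ramification type, $\tau_{H,v}(W_v)$ equals the sum over $c \in \mathcal{C}_G^{\Gamma_{k_v}}$ of $q_v^{-w(c)a(H)}$ times the groupoid cardinality of the fibre $\rho_{G,v}^{-1}(c)$. Both formulas therefore follow from a single claim: for every $c \in \mathcal{C}_G^{\Gamma_{k_v}}$, the fibre of $BG(k_v) \to \mathcal{C}_G^{\Gamma_{k_v}}$ over $c$ has groupoid cardinality exactly $1$.

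For the second formula, $BG(\O_v)_{\mathcal{C}}$ is by definition the union of the fibres over $e$ and over $c \in \mathcal{C}^{\Gamma_{k_v}}$ (only Galois-invariant classes occur in the image). Since $w(e)=0$, the fibre over $e$ contributes $1$, and the claim then gives the stated formula. The first formula is the case where all classes are allowed.

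To prove the claim, use Theorem \ref{thm:hensel} to replace $BG(k_v)$ by the groupoid $[G(-1)/G](\F_v)$, whose objects are pairs $(\gamma,F)$ with $\gamma = F\,\Frob_v(\gamma)F^{-1}$ and with $G$ acting by $g\cdot(\gamma,F) = (g^{-1}\gamma g,\ g^{-1}F\Frob_v(g))$. The compatibility diagram after Theorem \ref{thm:hensel} identifies the ramification type with $(\gamma,F)\mapsto [\gamma]$. By the analogue of Lemma \ref{lem:groupoid_count} for this action groupoid, the groupoid cardinality of the fibre over $c$ is
$$\frac{1}{|G|}\#\{(\gamma,F) : \gamma \in c,\ \gamma = F\,\Frob_v(\gamma)F^{-1}\}.$$
Count this by fixing $\gamma \in c$. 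Since $c$ is Frobenius-invariant, $\Frob_v(\gamma)$ lies in $c$, so it is conjugate to $\gamma$. The set of $F$ with $F\,\Frob_v(\gamma)F^{-1} = \gamma$ is therefore a coset of the centraliser $C_G(\gamma)$ and has $|C_G(\gamma)|$ elements. Summing over the $|G|/|C_G(\gamma)|$ elements of $c$ gives $|G|$ pairs in total, so the fibre has groupoid cardinality $1$, as claimed.

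The main subtle point is checking the conventions. The orbit counting must use the twisted conjugation $g^{-1}F\Frob_v(g)$, which is a genuine group action of $G$ on the set of pairs, so the groupoid-cardinality identity applies to it. One must also confirm that Theorem \ref{thm:hensel} respects the maps to $\mathcal{C}_G^{\Gamma_{k_v}}$, which is exactly the commutative diagram stated before it. Everything else is the elementary count above.
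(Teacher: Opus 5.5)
Your proposal is correct. Its first step, sorting by ramification type and reducing both formulas to the claim that each fibre $\rho_{G,v}^{-1}(c)$ with $c\in\mathcal{C}_G^{\Gamma_{k_v}}$ has groupoid cardinality $1$, is exactly the paper's reduction to \eqref{eqn:groupoid_cardinality_1}. You diverge from the paper in how you prove that claim.

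The paper proves it in the text only for abelian $G$:
\begin{itemize}
\item it identifies $\rho_{G,v}$ with the restriction map $\H^1(k_v,G)\to\H^1(I_v,G)^{\Gamma_{k_v}}$ in the inflation--restriction sequence;
\item it gets surjectivity from $\H^2(\hat{\Z},G)=0$;
\item it gets the fibre size from $\#Z^1_{\mathrm{nr}}(k_v,G)=|G|$ (Lemma~\ref{lem:unramified_cocycle}).
\end{itemize}
For non-abelian $G$ that argument breaks down, and the paper defers to \cite{LS24}, where gerbes over $\F_v$ stand in for the missing non-abelian $\H^2$.

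You instead transport the problem through Theorem~\ref{thm:hensel} to the explicit action groupoid $[G(-1)/G](\F_v)$ and count pairs $(\gamma,F)$ directly. This treats abelian and non-abelian $G$ uniformly and uses no cohomology.
\begin{itemize}
\item Surjectivity of the ramification type (Remark~\ref{rem:surjective_ramification_type}) becomes the triviality that a Frobenius-stable class containing $\gamma$ also contains $\Frob_v(\gamma)$.
\item The fibre count becomes the coset computation $|c|\cdot|C_G(\gamma)|=|G|$.
\end{itemize}
Your count is an instance of a general fact: for a finite group acting on a finite set compatibly with Frobenius, each Frobenius-stable orbit contributes exactly $1$ to the groupoid cardinality of the quotient over $\F_v$.

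The price is that all the arithmetic content moves into the Hensel equivalence (its essential surjectivity and full faithfulness), whose proof the paper defers to \cite[Thm.~4.13]{LS24}. That equivalence encodes the splitting $\Gamma^{\mathrm{tame}}_{k_v}\cong I_v^{\mathrm{tame}}\rtimes\overline{\langle F\rangle}$ from Lemma~\ref{lem:lift_of_frobenius}, together with the vanishing of wild inertia's image for tame $G$. This splitting is the concrete form of the vanishing of non-abelian $\H^2$ that the paper's gerbe argument uses.

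Your convention checks are right. The twisted conjugation $g^{-1}F\Frob_v(g)$ is a genuine right action preserving $\gamma=F\Frob_v(\gamma)F^{-1}$. The Galois action on $G(-1)$ factors through $\Gamma_{\F_v}$ because $G$ is unramified and tame at $v$. An equivalence of groupoids preserves groupoid cardinality.
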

\begin{proof}
	The first equality is a special case of the second one for $\mathcal{C} = \mathcal{C}_G^*$.
	
	Given that the height is defined via the ramification type, the natural approach is to sort elements of $BG(k_v)$
	according to their ramification type. This yields
	$$\tau_{H, v}(BG(\O_{v})_{\mathcal{C}}) = \sum_{c \in \mathcal{C}^{\Gamma_{k_v}}} q_v^{-w(c)a(H)} \sum_{\substack{\varphi_v \in BG[k_v] \\ \rho_{G,v}(\varphi_v) = c}} \frac{1}{|\Aut \varphi_v|}.$$
	Thus it suffices to show that 
	\begin{equation} \label{eqn:groupoid_cardinality_1}
		\sum_{\substack{\varphi_v \in BG[k_v] \\ \rho_{G,v}(\varphi_v) = c}} \frac{1}{|\Aut \varphi_v|} = 1
		\quad \text{ for all }c \in \mathcal{C}_G^{\Gamma_{k_v}}.
	\end{equation}
	The proof of this consists of two parts: firstly showing that the ramification type
	$\rho_{G,v} : BG[k_v] \to \mathcal{C}_G^{\Gamma_{k_v}}$ is surjective, and secondly calculating
	the fibres of the map using that it is non-empty.
	
	We explain these steps under the assumption that $G$ is abelian. This requires a cohomological interpretation
	of the ramification type. Inflation-restriction \cite[\S 2.6(b)]{Ser02} applied to the subgroup
	$I_v \subset \Gamma_{k_v}$ yields the exact sequence
	$$0 \to \H^1_{\textrm{nr}}(k_v, G) \to \H^1(k_v, G) \to \H^1(I_v, G)^{\Gamma_{k_v}} \to \H^2(\Gal(k_v^{\textrm{nr}}/k_v),G).$$
	Here we use that as $G$ is unramified at $v$, the inertia group $I_v$ acts trivially on $G$. 
	It follows easily from Definition \ref{def:ramification_type} that the map 
	$\H^1(k_v, G) \to \H^1(I_v, G)^{\Gamma_{k_v}}$ is exactly the ramification type; in particular we see that
	$\rho_{G,v}$ is a group homomorphism. However we have $\Gal(k_v^{\textrm{nr}}/k_v) \cong \dual{\Z}$, 
	which has cohomological dimension $1$ \cite[\S3.2]{Ser02}, so $\H^2(\Gal(k_v^{\textrm{nr}}/k_v),G) = 0$. In particular
	we deduce that $\rho_{G,v}$ is surjective. As $\rho_{G,v}$ is a group homomorphism the fibres all have the same
	cardinality, namely $\#\H^1_{\textrm{nr}}(k_v, G)$. But it follows from Lemma \ref{lem:unramified_cocycle} that 
	$\#Z^1_{\textrm{nr}}(k_v, G) = |G|$, whence \eqref{eqn:groupoid_cardinality_1} immediately follows from \eqref{eqn:tau_alternative}.
	
	If $G$ is no longer abelian, then every step above breaks. The ramification type
	$\rho_{G,v}$ is no longer a group homomorphism because there is no group structure on non-abelian $\H^1$.
	Moreover there is no version of the inflation-restriction exact sequence for non-abelian group cohomology
	where one encounters non-abelian $\H^2$. However one of the key applications of algebraic stacks is a version of
	non-abelian $\H^2$ via gerbes, as worked out by Giraud \cite{Gir71}. So to prove the result in the non-abelian
	case in \cite[Lem.~8.8, Thm.~8.10]{LS24} we instead give a proof using gerbes over finite fields;
	this proof is philosophically a non-abelian version of the above proof and using some of the techniques and results from \S \ref{sec:categorification}.
\end{proof}

In particular, for the minimal weight conjugacy classes we obtain simply
\begin{equation} \label{eqn:measure_minimal_weight}
\tau_{H, v}(BG(\O_{v})_{\mathcal{M}(H)}) = 	1+ \frac{\#\mathcal{M}(H)^{\Gamma_{k_v}}}{q_v}.
\end{equation}

\begin{example} \hfill

	(1) Our mass formula is a formula for the Tamagawa measure of $BG(k_v)$.
	This terminology is inspired by Bhargava's mass formula for $S_n$ \cite[Thm.~1.1]{Bha07}.
	We explain how to recover Bhargava's formula, though we emphasise that
	Bhargava's formula applies to any place, but we are only able to recover it at the
	 \emph{tame} places.
	Finding a stack-theoretic generalisation of Bhargava's formula at the wild places
	looks like an interesting and challenging problem.
	
	So let $v$ be a tame place for $S_n$. The Galois action on $\mathcal{C}_{S_n}$ is trivial
	as $S_n$ has rational character table (see Example \ref{ex:Galois_action}). Thus the right hand
	side of our mass formula reads
	$$ \sum_{ c \in S_n/\mathrm{conj}} q_v^{-\ind(c)} = 
	\sum_{k  = 0}^{n-1} q_v^{-k} \#\{ c \in S_n/\mathrm{conj}: \ind(c) = k\}.$$
	The conjugacy classes of $S_n$ are given by cycle types.
	So the number of conjugacy classes of $S_n$ of given index $k$ is exactly equal
	$q(k,n-k) :=$ the number of partitions of $k$ into at most $n-k$ parts.
	This recovers \cite[Thm.~1.1]{Bha07}.
	
	(2) We now consider $G = A_4$ with the weight function given by the index, 
	as in Example \ref{ex:discriminant}.
	The non-identity conjugacy classes of $A_4$ are as follows, with the following indices:
	$$
	\begin{tabular}{c|cccc}
		c& (1,2)(3,4) & (1,2,3) & (1,3,2) \\ \hline
		$\ind(c)$ & 2 & 2 & 2 
	\end{tabular}$$
	As $(1,2,3)^5 = (1,3,2)$, we find that these two conjugacy classes
	are swapped by the anticyclotomic action on $\mathcal{C}_{A_4}$, which factors
	through $\Q(\zeta_3)$. Thus for $p > 3$,
	Theorem \ref{thm:mass_formula} yields
	$$\tau_{p}(BG(\Q_p)) =
	\begin{cases}
		 1 + \frac{3}{p}, & \text{ if }p \equiv 1 \bmod 3, \\
		 1 + \frac{1}{p}, & \text{ if }p \equiv 2 \bmod 3. \\		 
	\end{cases}$$	
	This calculation was used to put forward a precise prediction for the leading constant when counting $A_4$-quartics of bounded discriminant in \cite[Conj.~1.1]{LS24}.
\end{example}

\begin{remark} \label{rem:surjective_ramification_type}
	One consequence of Theorem \ref{thm:mass_formula} is that the ramification type map
	$$\rho_{G,v}: BG[k_v] \to \mathcal{C}_G^{\Gamma_{k_v}}$$
	is surjective at the good places $v$; this is not at all obvious from the definition of $\rho_{G,v}$.
	Indeed as the proof of Theorem \ref{thm:mass_formula} illustrates, this requires the vanishing of some non-abelian $\H^2$.
	We utilise this in \S\ref{sec:heuristic} to develop a heuristic for Malle's conjecture.
\end{remark}

\subsection{Global Tamagawa measures} \label{sec:global_Tamagawa}
We now wish to take the product of local measures. To do so we require convergence factors. 

For any $\Gamma_k$-subset $\mathcal{R}$ we can consider the associated permutation representation $\C[\mathcal{R}]$, given by the finite-dimensional vector space with basis $\mathcal{R}$ and $\Gamma_k$ acting by permuting the basis.

We denote by $\L(\mathcal{R},s) := \L(\C[\mathcal{R}],s)$ the corresponding Artin $\L$-function with local Euler factors $\L_v(\mathcal{R},s)$, where we take  $\L_v(\mathcal{R},s) = 1$ for $v$-archimedean. The Artin $\L$-function $\L(\mathcal{R},s)$ is actually relatively easy to describe, namely it is a product of the Dedekind zeta functions.

\begin{lemma} \label{lem:Dedekind_zeta}
	Let $\mathcal{R} \subset \mathcal{C}_G^*$ be a Galois invariant subset. Then
	$$\L(\mathcal{R},s) = \prod_{c \in \mathcal{R}/\Gamma_k} \zeta_{k(c)}(s)$$
	where $k(c)$ denotes the residue field of $c$, as in Definition \ref{def:residue_field}.
\end{lemma}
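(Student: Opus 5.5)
The plan is to decompose the permutation representation into orbits and then apply standard facts about Artin $\L$-functions. Since $\mathcal{R}$ is a finite $\Gamma_k$-set, it is the disjoint union of its Galois orbits $\mathcal{R} = \bigsqcup_{c \in \mathcal{R}/\Gamma_k} \Gamma_k \cdot c$. The permutation representation respects this decomposition:
$$\C[\mathcal{R}] \cong \bigoplus_{c \in \mathcal{R}/\Gamma_k} \C[\Gamma_k \cdot c].$$
Artin $\L$-functions are multiplicative in direct sums, and this holds Euler factor by Euler factor, including at ramified places, because taking inertia invariants commutes with direct sums. So it suffices to treat a single orbit.

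For a single orbit $\Gamma_k \cdot c$, I will identify it as a $\Gamma_k$-set with $\Gamma_k/\mathrm{Stab}(c)$. By Definition~\ref{def:residue_field}, $\mathrm{Stab}(c) = \Gamma_{k(c)}$. Hence
$$\C[\Gamma_k \cdot c] \cong \mathrm{Ind}_{\Gamma_{k(c)}}^{\Gamma_k} \mathbf{1}.$$
The inductivity of Artin $\L$-functions \cite[\S VII.10]{Neu99} then gives $\L(\mathrm{Ind}_{\Gamma_{k(c)}}^{\Gamma_k}\mathbf{1},s) = \L(\mathbf{1}_{k(c)},s) = \zeta_{k(c)}(s)$. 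Note that the statement uses the convention that archimedean Euler factors are $1$, so the zeta functions here are the finite Euler products without Gamma factors. The choice of representative $c$ does not matter, since conjugate stabilisers give isomorphic fields $k(c)$.

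The only point needing care is that inductivity holds for the full Euler product, including the ramified factors. This is standard, so nothing beyond bookkeeping is expected to be an obstacle.

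Alternatively, one can verify the unramified factors by hand. At an unramified place $v$, the local factor is $\det(1 - \Frob_v q_v^{-s} \mid \C[\Gamma_k c])^{-1}$. Each cycle of $\Frob_v$ of length $f$ on the orbit contributes $(1 - q_v^{-fs})^{-1}$. These cycles correspond exactly to the primes $w \mid v$ of $k(c)$, with $f = f(w/v)$ and $\Norm w = q_v^f$. This recovers the Euler factor of $\zeta_{k(c)}$ at $v$.
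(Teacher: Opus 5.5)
Your proposal is correct and follows the paper's own proof: decompose $\C[\mathcal{R}]$ over the Galois orbits, identify each orbit's permutation representation with $\mathrm{Ind}_{\Gamma_{k(c)}}^{\Gamma_k}\mathbf{1}$, and apply additivity and inductivity of Artin $\L$-functions. Your explicit check of the unramified Euler factors via cycles of $\Frob_v$ is a correct extra sanity check, but the paper does not need it.
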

\begin{proof}
	The representation $\C[\mathcal{R}]$ is a direct sum of representations induced from the trivial representation with respect to the field extensions $k \subset k(c)$. The result then follows from the Artin formalism for $\L$-functions.
\end{proof}

Recall the minimal weight conjugacy classes from Definition \ref{def:minimal_weight}. Let $\L^*(\mathcal{M}(H),1) := \lim_{s \to 1}(s-1)^{b(k,H)} \L(\mathcal{M}(H),s)$,
where $b(k,H):= \#\mathcal{M}(H)/\Gamma_k$. This is non-zero as $b(k,H)$ is the order of pole at $s= 1$ by Lemma \ref{lem:Dedekind_zeta}. For each place $v$ of $k$ we define
$$ \lambda_v = \L_v(\mathcal{M}(H),1).$$
We emphasise that our convergence factors depend on $H$.
\begin{definition} \label{def:Tamagawa}
The  global Tamagawa measure associated to $H$ is 
\begin{equation} 
	\tau_H = \L^*(\mathcal{M}(H),1)\prod_v \lambda_v^{-1} \tau_{H,v}.
\end{equation}
\end{definition}

It follows relatively quickly from the mass formula (Theorem \ref{thm:mass_formula}) that the product defining $\tau_H$ is absolutely convergent on both $\prod_v BG[k_v]$ and $BG(\Adele_k)_{\mathcal{M}(H)}$ (see \cite[Thm.~8.17]{LS24}). We study now an important class of measurable sets.

\begin{definition} \label{def:basic_open}
	Let $\mathcal{C} \subset \mathcal{C}_{G}$ be Galois invariant.
    A \textit{large open} of $BG(\Adele_k)_{\mathcal{C}}$
    is a subset of the form $W = \lim_S \prod_{v \in S} W_v \prod_{v \not \in S} BG(\O_v)_{\mathcal{C}}$ with $W_v \subseteq BG[k_v]$ such that
    $$BG(\O_v)_{\mathcal{C}} \subseteq W_v$$
    for all but finitely many good $v$. 
    A \textit{basic open} is a large open such that
    $W_v = BG(\O_v)_{\mathcal{C}}$ for all but finitely many $v$.
\end{definition}

\begin{lemma} \label{lem:large_open}
	Any large open of $BG(\Adele_k)_{\mathcal{M}(H)}$ is open, closed, and has positive Tamagawa measure provided
	it is non-empty.
\end{lemma}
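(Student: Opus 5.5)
Write $\mathcal{M} := \mathcal{M}(H)$ and let $W = \lim_S \prod_{v \in S} W_v \prod_{v \notin S} BG(\O_v)_{\mathcal{M}}$ be a large open. Fix a finite set of places $S_0$, containing the bad places and the places where \eqref{eqn:tame_height} fails, such that $BG(\O_v)_{\mathcal{M}} \subseteq W_v$ for all $v \notin S_0$. The plan has two parts: first a topological argument, then a product computation for the measure.

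\textbf{Topology.} Since each $W_v \supseteq BG(\O_v)_{\mathcal{M}}$ for $v \notin S_0$, we can write
$$W = \prod_{v \in S_0} W_v \times W',$$
where $W'$ is the restricted product over $v \notin S_0$ of the $W_v$ with respect to $BG(\O_v)_{\mathcal{M}}$. I would then argue as follows.

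\begin{enumerate}
\item $W$ contains the basic open $\prod_{v \in S_0} W_v \prod_{v \notin S_0} BG(\O_v)_{\mathcal{M}}$, and that basic open is open in the restricted product topology.
\item Concretely, $W$ is a union of such basic opens, one for each finite enlargement $S \supseteq S_0$, since any element of $W$ lies in $BG(\O_v)_{\mathcal{M}}$ for all $v$ outside some finite $S$. Hence $W$ is open.
\item For closedness, I show the complement is open. Take $x \notin W$. Either $x_v \notin W_v$ for some $v \in S_0$, or $x_v \notin W_v$ for some $v \notin S_0$; there are no other possibilities. In either case the set $\{y : y_v = x_v\}$ is open, because singletons are open in the discrete space $BG[k_v]$ and integral conditions are imposed only at finitely many places. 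This set is disjoint from $W$.
\end{enumerate}

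\textbf{Positivity of the measure.} Here I use the definition of $\tau_H$ as the convergent product (Definition \ref{def:Tamagawa}, convergence from the mass formula). The measure of $W$ is
$$\tau_H(W) = \L^*(\mathcal{M},1) \prod_{v \in S_0} \lambda_v^{-1} \tau_{H,v}(W_v) \prod_{v \notin S_0} \lambda_v^{-1} \tau_{H,v}(W_v).$$
This is justified by monotone convergence over increasing $S$, writing $W$ as the increasing union of the basic opens above.

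\begin{enumerate}
\item For $v \in S_0$, the factor $\tau_{H,v}(W_v)$ is positive. Nonemptiness of $W$ forces each $W_v \neq \emptyset$, and every point has positive mass $1/(|\Aut|H_v^{a(H)})$.
\item For $v \notin S_0$, I have the lower bound $\lambda_v^{-1}\tau_{H,v}(W_v) \geq \lambda_v^{-1}\tau_{H,v}(BG(\O_v)_{\mathcal{M}})$. By \eqref{eqn:measure_minimal_weight} this equals
$$\L_v(\mathcal{M},1)^{-1}\left(1 + \#\mathcal{M}^{\Gamma_{k_v}} q_v^{-1}\right).$$
\end{enumerate}

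The remaining point is that $\prod_{v \notin S_0}$ of these lower bounds converges to a nonzero number. Compute the local Euler factor by decomposing $\mathcal{M}$ into Frobenius orbits. An orbit of length $f$ contributes $(1 - q_v^{-f})^{-1}$, so
$$\L_v(\mathcal{M},1) = 1 + \#\mathcal{M}^{\Gamma_{k_v}} q_v^{-1} + O(q_v^{-2}),$$
where the fixed points $\#\mathcal{M}^{\Gamma_{k_v}}$ are exactly the orbits of length $1$. Each factor is therefore $1 + O(q_v^{-2})$ and positive, and $\sum_v q_v^{-2} < \infty$. So the product converges absolutely to a positive number.

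Combining this with the positive finite factors at $v \in S_0$ and $\L^*(\mathcal{M},1) > 0$ gives $\tau_H(W) > 0$. I expect this Euler-factor comparison, matching the $q_v^{-1}$ terms, to be the main (though mild) obstacle. It uses crucially that $W$ is built on the partial adelic space for $\mathcal{M}$, rather than on all of $BG(k_v)$, where the larger conjugacy classes would otherwise appear.
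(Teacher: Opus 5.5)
Your proof is correct and is essentially the paper's argument. The clopen-ness is the same restricted-product topology check: you phrase it via unions of basic opens and continuity of the coordinate projections, where the paper restricts to each open piece $\prod_{v\in S}BG(k_v)\prod_{v\notin S}BG(\O_v)_{\mathcal{M}(H)}$. Positivity is obtained, as in the paper, by passing to the basic open $\prod_{v\in S_0}W_v\prod_{v\notin S_0}BG(\O_v)_{\mathcal{M}(H)}$ and applying \eqref{eqn:measure_minimal_weight}. You also make explicit the Euler-factor comparison $\lambda_v^{-1}\bigl(1+\#\mathcal{M}(H)^{\Gamma_{k_v}}q_v^{-1}\bigr)=1+O(q_v^{-2})$, which the paper leaves implicit.

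One small correction to your closing remark. What is crucial is the lower bound $BG(\O_v)_{\mathcal{M}(H)}\subseteq W_v$ for almost all $v$; without it the factors would be like $\lambda_v^{-1}$ and the product could tend to $0$. Avoiding $BG(k_v)$ is not what matters: the non-minimal classes only contribute $O(q_v^{-1-a(H)})$, so the product over $\prod_v BG[k_v]$ also converges.
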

\begin{proof}
	It is open and closed because the same holds for all suitably large finite sets of places $S$
	when restricted to 
	$\prod_{v \in S} BG(k_v) \prod_{v \notin S} BG(\O_v)_{\mathcal{M}(H)}$. It suffices to show positive measure for basic opens. However this is immediate
	from \eqref{eqn:measure_minimal_weight}.
\end{proof}

\begin{example} \label{ex:restricted_ramification_type}
    Let $\mathcal{C} \subset \mathcal{R} \subset \mathcal{C}_G^*$ be Galois invariant and let $S$ be a finite set of places. Then 
    $$ \{ (\varphi_v) \in BG(\Adele_k)_{\mathcal{C}} : \varphi_v \in BG(\O_{v})_{\mathcal{R}} \text{ for all } v \notin S\}$$
    is a large open; it consists of those adelic points whose ramification type is restricted to lie in $\mathcal{R}$ for all $v \notin S$.
\end{example}

There are alternative convergence factors which are easier to write down, but care is required when working with them since they only give a conditionally convergent Euler product in general. See \cite[Lem.~8.19]{LS24} for the explanation why the following definition is equivalent to Definition \ref{def:Tamagawa}.


\begin{definition}[Naive convergence factors]
Write $\zeta_k^*(1) =  \lim_{s \to 1}(s-1) \zeta_k(s)$. Then we have
\[
\tau_H = \zeta_k^*(1)^{b(k,H)}\prod_v (1 - q_v^{-1})^{b(k,H)} \tau_{H,v}
\]
where we set $q_v^{-1} =0$ for $v$ archimedean.
\end{definition}


\section{Brauer groups} \label{sec:Brauer}
We now come to the main new innovation of the paper \cite{LS24}, which is the use of Brauer groups to explain the leading constant in Malle's conjecture. To make the exposition as simple as possible, we avoid all stacky terminology (aside from the definition of $BG$) and use the explicit description of Brauer group elements on $BG$ obtained in \cite[\S6.2]{LS24} via \emph{central extensions}.

\subsection{Pushouts}
We recall the following standard construction from category theory which we will use multiple times. Let $A \to B$ and $A \to C$ be morphisms of groups. Then the \emph{pushout} is the group $D$ which is universal with respect to the diagram
\[
\xymatrix{ A \ar[r] \ar[d] & B \ar[d] \\ 
C  \ar[r] & D. }
\]
If $A,B,C$ are $\Gamma_k$-groups, then $D$ admits a structure of a $\Gamma_k$-group, which is also unique.

\subsection{Central extensions}
Let $k$ be a field. We will consider the $\Gamma_k$-module of roots of unity in $k^{\sep}$.
\[
\mu_\infty := \{\zeta \in k^{\sep}: \exists n \in \N: \zeta^n = 1\} = \bigcup_{p \nmid n} \mu_n. 
\]

\begin{definition} \label{def:central}
	Let $G$ be finite $\Gamma_k$-group. A \emph{Brauer element} is a central extension of $\Gamma_k$-groups
	\begin{equation} \label{seq:central}
	\beta: 1 \to \mu_{\infty} \to G_{\beta} \to G \to 1.
	\end{equation}
\end{definition}

We emphasise in this definition: even if $G$ has trivial Galois action, the group $G_{\beta}$ need not have trivial Galois action.

 	The set of such extensions admits a commutative group structure given by the Baer sum. Given central extensions $\beta, \beta'$ the Baer sum is defined by the following pushout diagram
 	\[\begin{tikzcd}
 		1 & {\mu_{\infty} \times \mu_{\infty}} & {G_{\beta} \times_G G_{\beta'}} & G & 1 \\
 		1 & {\mu_{\infty}} & {G_{\beta + \beta'}} & G & 1 
 		\arrow[from=1-1, to=1-2]
 		\arrow[from=1-2, to=1-3]
 		\arrow[from=1-2, to=2-2, "(\zeta{,} \zeta{'}) \to \zeta \cdot \zeta{'}"]
 		\arrow[from=1-3, to=1-4]
 		\arrow[from=1-3, to=2-3]
 		\arrow[from=1-4, to=1-5]
 		\arrow[from=1-4, to=2-4, equal]
 		\arrow[from=2-1, to=2-2]
 		\arrow[from=2-2, to=2-3]
 		\arrow["\lrcorner"{anchor=center, pos=0.125, rotate=180}, draw=none, from=2-3, to=1-2]
 		\arrow[from=2-3, to=2-4]
 		\arrow[from=2-4, to=2-5]
 	\end{tikzcd}\]

	The inverse is given by applying inversion to the map $\mu_\infty \to G_\beta$, and the identity given by the trivial (or split) central extension  $G_0:= \mu_\infty \times G$ with the natural maps. We consider central extensions up to the usual notion of equivalence \cite[\href{https://stacks.math.columbia.edu/tag/010J}{Tag 010J}]{stacks-project}.
	
\begin{lemma}
	The collection of central extensions \eqref{seq:central} up to equivalence defines an abelian group under Baer sum. We call this the (normalised) Brauer group of $BG$, and denote it by $\Bre BG$.
\end{lemma}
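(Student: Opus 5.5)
The plan is to prove the group axioms for the Baer sum directly at the level of extension classes, checking at each step that every construction lives in $\Gamma_k$-groups. Well-definedness comes first. The fibre product $G_\beta \times_G G_{\beta'}$ is a $\Gamma_k$-subgroup of $G_\beta \times G_{\beta'}$, because both projections to $G$ are $\Gamma_k$-equivariant. It contains $\mu_\infty \times \mu_\infty$ as a central $\Gamma_k$-stable subgroup. The multiplication map $m:\mu_\infty\times\mu_\infty\to\mu_\infty$ is $\Gamma_k$-equivariant since $\mu_\infty$ is abelian and Galois acts by automorphisms. Its kernel is the antidiagonal $\{(\zeta,\zeta^{-1})\}$, which is central and $\Gamma_k$-stable. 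So the pushout can be written explicitly as
\[
G_{\beta+\beta'} = (G_\beta\times_G G_{\beta'})/\{(\zeta,\zeta^{-1})\},
\]
with its induced $\Gamma_k$-action. The image of $\mu_\infty$ in this quotient is central, and the quotient by it is $G$. Exactness follows from a short diagram chase.

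Next I would check that the operation descends to equivalence classes. An equivalence $G_\beta\cong G_{\gamma}$ compatible with $\mu_\infty$ and $G$ induces an isomorphism of fibre products, and hence of the explicit quotients above. It also helps to record the standard fact that any morphism of central extensions inducing the identity on $\mu_\infty$ and on $G$ is an isomorphism (five lemma). This way the universal property determines $G_{\beta+\beta'}$ up to equivalence.

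Then I verify the axioms one at a time.
\begin{itemize}
\item \emph{Commutativity.} The swap $G_\beta\times_G G_{\beta'}\to G_{\beta'}\times_G G_\beta$ preserves the antidiagonal, since $m$ is symmetric.
\item \emph{Associativity.} Both bracketings are equivalent to the triple fibre product modulo $\{(\zeta_1,\zeta_2,\zeta_3):\zeta_1\zeta_2\zeta_3=1\}$.
\item \emph{Identity.} For $G_0=\mu_\infty\times G$, the fibre product $G_\beta\times_G G_0\cong G_\beta\times\mu_\infty$, and the quotient map $(x,\zeta)\mapsto x\zeta$ gives an equivalence with $G_\beta$.
\item \emph{Inverses.} Let $-\beta$ be $G_\beta$ with $\mu_\infty$ embedded via $\zeta\mapsto\iota(\zeta)^{-1}$. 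The diagonal $s:G\to G_{\beta}\times_G G_{-\beta}$ is not available directly, but the set-theoretic map $\Delta: G_\beta\to G_\beta\times_G G_{-\beta}$, $x\mapsto(x,x)$, is a $\Gamma_k$-equivariant homomorphism. Under it, $\iota(\zeta)\mapsto(\iota(\zeta),\iota(\zeta))$, which lies in the antidiagonal for the modified embedding. So $\Delta$ descends to an equivariant section $G\to G_{\beta+(-\beta)}$. A central extension with an equivariant homomorphic section is equivalent to the split extension $G_0$.
\end{itemize}

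The main obstacle is not conceptual but the bookkeeping of the Galois action. Every isomorphism used, such as those in associativity and the splitting, must be $\Gamma_k$-equivariant and not merely a group isomorphism. Likewise, splitting must mean splitting by a $\Gamma_k$-equivariant section, since otherwise the trivial class would be wrongly identified. I would handle this by always writing maps by explicit formulas in the coordinates of the fibre products, where equivariance is visibly coordinatewise.
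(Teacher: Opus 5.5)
Your proposal is correct and takes essentially the paper's approach. The paper only notes that the isomorphisms witnessing the group axioms of the Baer sum preserve the $\Gamma_k$-action, citing the standard functoriality of the Baer sum; you write those isomorphisms out explicitly (fibre product modulo the antidiagonal, the split extension as identity, the diagonal-induced section for the inverse) and check $\Gamma_k$-equivariance coordinatewise, which is a faithful expansion of that one-line argument.
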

\begin{proof}
	To check that the Baer sum gives a well-defined group structure one has to check that the isomorphisms witnessing the group laws for the Baer sum preserve the Galois action. This follows from the standard functoriality properties of the Baer sum \cite[\href{https://stacks.math.columbia.edu/tag/010L}{Tag 010L}]{stacks-project}.
\end{proof}

\begin{remark}
	We compare the definition with the usual $\Br BG := \H^2(BG,\Gm)$ as used in \cite{LS24}. It is shown in \cite[Lem.~6.7]{LS24} that any element of $\Br BG$ which evaluates trivially at the identity cocycle of $BG$ may be represented by a central extension, as in Definition \ref{def:central}. Thus in our survey we take this to be the \emph{definition} of the Brauer group of $BG$. We use slightly different notation to take into account this renormalisation; to compare between the two papers we have $\Br BG = \Br k \oplus \Bre BG$. The constant elements $\Br k$ will play no role in this survey, essentially because they play no role in the Brauer--Manin obstruction.
\end{remark}

For explicit computations, it is often easier to work with \textit{finite} $\Gamma_k$-groups. Given a central extension
\begin{equation} \label{def:central_extension_mu_n}
	\beta: 1 \to \mu_n \to G_{\beta} \to G \to 1
\end{equation}
with kernel $\mu_n$ for some $n$, one obtains a central extension 
\begin{equation} \label{eqn:push_out_mu_infty}
\begin{split}
\xymatrix{ 1 \ar[r] & \mu_n \ar[r] \ar[d] & G_{\beta} \ar[r] \ar[d] & G \ar[r] \ar[d] &  1 \\ 
1 \ar[r]& \mu_\infty \ar[r] & G_{\infty,\beta}   \ar[r] & G \ar[r] & 1. }
\end{split}
\end{equation}
with kernel $\mu_\infty$ by applying a pushout, thus a Brauer group element. The disadvantage is that the choice of representation need not be unique. This discrepancy is recorded in the following lemma.

\begin{lemma} \label{lem:trivial_in_Brauer_group}
	Let 
	$$	\beta: 1 \to \mu_n \to G_{\beta} \to G \to 1$$
	be a central extension. Then $\beta$ represents the trivial class in the Brauer group 
	if and only if there exists a homomorphism $G_{\beta} \to \mu_\infty$ which is the identity on $\mu_n \subset G_\beta$.
\end{lemma}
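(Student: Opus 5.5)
The approach is to reduce the statement to the standard fact that a central extension is trivial exactly when it splits. Here the splitting is taken in the category of $\Gamma_k$-groups, and it is phrased through a retraction onto the kernel. Let $\beta_\infty$ denote the pushout extension $1 \to \mu_\infty \to G_{\infty,\beta} \to G \to 1$ from \eqref{eqn:push_out_mu_infty}. By definition, $\beta$ represents the trivial class in $\Bre BG$ if and only if $\beta_\infty$ is equivalent to the split extension $\mu_\infty \times G$. I will use the homomorphisms $G_\beta \to \mu_\infty$ in the statement, implicitly taken to be $\Gamma_k$-equivariant, since all maps in the theory are maps of $\Gamma_k$-groups.

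\emph{Step 1: triviality of $\beta_\infty$ is equivalent to a $\Gamma_k$-equivariant retraction $r: G_{\infty,\beta} \to \mu_\infty$.} Suppose $\beta_\infty$ is equivalent to $\mu_\infty \times G$. Composing the equivalence $G_{\infty,\beta} \cong \mu_\infty \times G$ with the first projection gives such a retraction. Conversely, given $r$, the map $(r, \pi): G_{\infty,\beta} \to \mu_\infty \times G$ is a $\Gamma_k$-equivariant homomorphism compatible with the maps from $\mu_\infty$ and to $G$. By the five lemma it is an isomorphism, hence an equivalence of extensions. Here we use that $\mu_\infty$ is central, which is what makes $(r,\pi)$ a homomorphism into the direct product.

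\emph{Step 2: retractions of $\beta_\infty$ correspond to homomorphisms $f: G_\beta \to \mu_\infty$ restricting to the inclusion on $\mu_n$.} Given $r$, set $f := r \circ (G_\beta \to G_{\infty,\beta})$; by commutativity of \eqref{eqn:push_out_mu_infty}, $f$ restricts to the identity on $\mu_n$. Conversely, given $f$, take the maps $\mathrm{id}: \mu_\infty \to \mu_\infty$ and $f: G_\beta \to \mu_\infty$. These agree on $\mu_n$, so the universal property of the pushout produces a unique $\Gamma_k$-equivariant homomorphism $r: G_{\infty,\beta} \to \mu_\infty$, and $r$ is the identity on $\mu_\infty$ by construction. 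Combining Steps 1 and 2 proves the lemma.

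The main point needing care, though not a genuine obstacle, is to confirm that the pushout of $\Gamma_k$-groups in \eqref{eqn:push_out_mu_infty} really is a central extension with kernel $\mu_\infty$, so that the pushout universal property and the five lemma apply. Concretely, $G_{\infty,\beta} = (\mu_\infty \times G_\beta)/\{(\zeta^{-1},\zeta) : \zeta \in \mu_n\}$, which is a quotient by a central, Galois-stable subgroup. This explicit model makes both the $\Gamma_k$-structure and the universal property transparent.
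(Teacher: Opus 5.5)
Your proof is correct and follows the paper's argument: triviality of the pushout extension $1 \to \mu_\infty \to G_{\infty,\beta} \to G \to 1$ is equivalent to a retraction $G_{\infty,\beta} \to \mu_\infty$, and the pushout's universal property puts such retractions in bijection with homomorphisms $G_\beta \to \mu_\infty$ that are the identity on $\mu_n$. One small correction: $(r,\pi)$ is a homomorphism into the product for any pair of homomorphisms $r$ and $\pi$, so centrality is not what makes it one. Centrality is really used to make your quotient model $(\mu_\infty \times G_\beta)/\{(\zeta^{-1},\zeta)\}$ a group, and there the universal property only needs to be checked for maps into the abelian group $\mu_\infty$.
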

\begin{proof}
	We use the diagram \eqref{eqn:push_out_mu_infty}.
	Assume that the corresponding Brauer group element is trivial. Then $G_{\infty,\beta} \cong \mu_\infty \times G$. The map $G_\beta \to \mu_\infty$ is just given by composition with the 
	projection map. Conversely
	let $\chi : G_\beta \to \mu_\infty$ be as in the statement. Then the universal property of pushout
	yields a map $G_{\infty,\beta} \to \mu_\infty$ which is a retraction of $\mu_\infty \to G_{\infty,\beta}$. Thus the central extension splits, hence is a product. 
\end{proof}

A more explicit description of those central extensions which represent the trivial class can be found in \cite[Lem~6.6]{LS24}.

The following shows which values of $n$ one can work with (for example one can take $n = |G|^2$).

\begin{lemma} \label{lem:exp}
	Any Brauer group element may be represented by a central extension with kernel $\mu_{n}$ for $n = \exp(G) \cdot \exp(\H^2(G, \Q/\Z))$, where $\exp(G)$ denotes the exponent of $G$ and $\exp(\HH^2(G, \Q/\Z))$ is the exponent of the Schur multiplier.
\end{lemma}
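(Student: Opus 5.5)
Given a Brauer element $\beta: 1 \to \mu_\infty \to G_\beta \to G \to 1$, the plan is to find a finite $\Gamma_k$-subgroup $G'_\beta \subset G_\beta$ that surjects onto $G$ and meets $\mu_\infty$ in exactly $\mu_n$. The resulting extension $1 \to \mu_n \to G'_\beta \to G \to 1$ then pushes out to $\beta$. Indeed, the natural map from the pushout $\mu_\infty \times_{\mu_n} G'_\beta$ to $G_\beta$ is a morphism of extensions, hence an isomorphism by the five lemma. So everything reduces to constructing a Galois-stable finite section-like subgroup.

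\textbf{Step 1: $\mu_n \cdot S$-type subgroups.} Choose a set-theoretic section $s: G \to G_\beta$ and let $c(g,h) = s(g)s(h)s(gh)^{-1} \in \mu_\infty$ be the associated $2$-cocycle for the abstract group extension, ignoring the Galois action. Since $\mu_\infty \cong \Q/\Z$ as an abstract group (in characteristic $p$ it is the prime-to-$p$ part, which works the same way), the class $[c]$ lies in $\H^2(G,\Q/\Z)$ and is killed by $m := \exp(\H^2(G,\Q/\Z))$. So we can write $c^m = \delta f$ for some $f: G \to \mu_\infty$. Because $\mu_\infty$ is divisible, choose $f'$ with $f'^m = f$ and replace $s$ by $s \cdot f'^{-1}$. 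The new cocycle $c$ then satisfies $c^m = 1$, so it takes values in $\mu_m$. Consequently $H := \mu_m \cdot s(G)$ is a finite subgroup of $G_\beta$ surjecting onto $G$.

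\textbf{Step 2: Galois stability.} This is the main obstacle, since $H$ need not be $\Gamma_k$-stable. Instead I would take $G'_\beta := \{x \in G_\beta : x^{\exp(G)} \in \mu_n\}$ with $n = \exp(G)\cdot m$ as in the statement. This set is $\Gamma_k$-stable, because the Galois action is by group automorphisms preserving $\mu_\infty$ and hence $\mu_n$. It contains $H$: every element of $H$ has the form $\zeta s(g)$, and raising it to the power $e = \exp(G)$ gives $\zeta^{e} \cdot s(g)^{e}$, where $s(g)^{e}$ is a product of values of $c$ and so lies in $\mu_m$. Hence $(\zeta s(g))^{e} \in \mu_{m}\cdot\mu_m^{e}\subset \mu_n$, so $G'_\beta$ surjects onto $G$.

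\textbf{Step 3: $G'_\beta$ is a subgroup with $G'_\beta\cap\mu_\infty=\mu_n$.} For closure I would show that $G'_\beta = \mu_n \cdot H$. Any $x \in G'_\beta$ can be written as $x = \zeta h$ with $h \in H$ lying over the same element of $G$ and $\zeta \in \mu_\infty$. Then $x^{e} = \zeta^{e} h^{e}$ because $\zeta$ is central, and $h^e \in \mu_n$ by Step 2, so $\zeta^{e} \in \mu_n$, giving $\zeta \in \mu_{ne}$. This overshoots, so the estimate must be refined. I would instead define $G'_\beta := \mu_n\cdot H$ and check Galois stability directly: for $\sigma \in \Gamma_k$, the element $\sigma(s(g))$ lies over $\sigma(g)$, so $\sigma(s(g)) = \zeta\, s(\sigma g)$ for some $\zeta \in \mu_\infty$. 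Raising to the power $e$ gives $\zeta^e \in \mu_\infty$, using that $\sigma$ preserves $\mu_m$ and that $s(\cdot)^e\in\mu_m$. The obstacle is bounding $\zeta^e$: one needs $\zeta^{e} \in \mu_m$-type control, which forces $\zeta \in \mu_{em} = \mu_n$. To get this, I would compare the two cocycles $c$ and $\sigma(c)$ transported by $\sigma$: both take values in $\mu_m$ and differ by the coboundary of $g \mapsto \zeta_g$. Hence $\delta(\zeta^m) = 1$, so $g\mapsto \zeta_g^m$ is a homomorphism $G \to \mu_\infty$ and is killed by $e$. This gives $\zeta_g \in \mu_{em} = \mu_n$, so $\mu_n \cdot H$ is $\Gamma_k$-stable, and its intersection with $\mu_\infty$ is $\mu_n$. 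Pushing out as described at the start then concludes the argument.
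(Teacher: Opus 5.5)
Your argument is correct and is essentially the paper's: Step 1 is its Kummer-sequence reduction to a $\mu_m$-extension of abstract groups, written out on cocycles, and your final Step 3 computation makes explicit the obstruction the paper only alludes to as the source of the extra factor $\exp(G)$ in the Galois-stable subgroup $\mu_n\cdot H$ (namely, the Galois defect $g\mapsto\zeta_g^m$ is a homomorphism $G\to\mu_\infty$, i.e.\ lies in $\H^1(G,\mu_\infty)$, and is therefore killed by $\exp(G)$). You can drop the abandoned definition in Step 2; with $\mu_m$ in place of $\mu_n$ it gives exactly $\{x\in G_\beta : x^{\exp(G)}\in\mu_m\}=\mu_n\cdot H$, which makes the Galois stability of $\mu_n\cdot H$ immediate.
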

\begin{proof}[Proof sketch]
	Let $(G_{\beta}, \beta)$ be a Brauer element. We disregard the Galois actions for now. Recall that central extensions of $G$ by $A$ are classified by the group cohomology $\HH^2(G, A)$  \cite[Thm.~IV.3.12]{Bro94}. For all $m$ we have the Kummer exact sequence
	\[
	\HH^1(G, \mu_{\infty}) \xrightarrow{\cdot m} \HH^1(G, \mu_{\infty}) \to \HH^2(G, \mu_{m}) \to \HH^2(G, \mu_{\infty}) \xrightarrow{\cdot m} \HH^2(G, \mu_{\infty})
	\]
	We have $\HH^2(G, \mu_{\infty}) = \HH^2(G, \Q/\Z)$ as abstract groups. Taking $m =\exp(\H^2(G, \Q/\Z))$ we may thus find a central extension of abstract groups 
	\[
	\alpha: 1 \to \mu_{\exp(\H^2(G, \Q/\Z))} \to \tilde{G} \to G \to 1
	\]
	whose pushout to $\mu_{\infty}$ is isomorphic to $(G_{\beta}, \beta)$ as an extension of abstract groups.
	
	Handling the  Galois action is more delicate as it requires considering obstruction classes in cohomology which leads to a kernel of size $\exp(G) \cdot \exp(\H^2(G, \Q/\Z))$. Another proof which goes via the decomposition into algebraic and transcendental Brauer group elements can be found in \cite[Cor.~6.3]{LS24}
\end{proof}

We can evaluate Brauer group elements of $BG(k)$.

\begin{definition} \label{def:evaluate}
	Let $\beta \in \Bre BG$ be represented by $1 \to \mu_n \to G_{\beta} \to G \to 1$. Then applying Galois cohomology to \eqref{seq:central} we obtain a connecting map
	$$\delta: \H^1(k,G) \to \H^2(k,\mu_n) = \Br k[n];$$	
	this exists exactly because $\mu_n$ is central, see \cite[\S5.7]{Ser02}. We define the evaluation $\beta(\varphi)$
	of $\beta$ at an element $\varphi \in BG[k]$ to be
	$$\beta: BG[k] \to \Br k, \quad \varphi \mapsto \delta(\varphi).$$
\end{definition}

\begin{lemma}\label{lem:evalutation_map_trivial_embedding_problem}
	We have $\beta(\varphi) = 0$ if and only if $\varphi$ lies in the image of the map $\H^1(k,G_\beta) \to \H^1(k,G)$.
\end{lemma}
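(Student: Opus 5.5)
This is an instance of Serre's exact sequence of pointed sets for a central extension \cite[\S I.5.7]{Ser02}. Applying non-abelian Galois cohomology to
$$1 \to \mu_n \to G_\beta \to G \to 1,$$
with $\mu_n$ central, gives the exact sequence
$$\H^1(k,\mu_n) \to \H^1(k,G_\beta) \to \H^1(k,G) \xrightarrow{\delta} \H^2(k,\mu_n).$$
Exactness at $\H^1(k,G)$ means exactly that $\delta(\varphi)$ is the neutral element if and only if $\varphi$ lies in the image of $\H^1(k,G_\beta)$. So the plan is to recall the explicit construction of $\delta$ and check this exactness directly. I would also check that the image of $\H^1(k,G_\beta)$ in $\H^1(k,G)$ does not depend on which representative $\mu_n$-extension of $\beta$ is chosen, although the statement is really about a fixed representative.

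\emph{Construction of $\delta$.} Given a cocycle $\varphi$, choose a continuous set-theoretic lift $\tilde\varphi:\Gamma_k\to G_\beta$. Continuity is possible because $G_\beta$ is finite and $\varphi$ factors through a finite quotient. Put
$$c(\sigma,\tau)=\tilde\varphi(\sigma)\,\sigma(\tilde\varphi(\tau))\,\tilde\varphi(\sigma\tau)^{-1}.$$
This lies in $\mu_n$ because $\varphi$ is a cocycle, and centrality of $\mu_n$ makes $c$ a $2$-cocycle. Changing the lift, or replacing $\varphi$ by a conjugate $\varphi^g$ and lifting $g$ to $G_\beta$, changes $c$ only by a coboundary.

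\emph{If direction.} If $\varphi$ is the image of $\psi\in Z^1(k,G_\beta)$, possibly after conjugation, take $\tilde\varphi=\psi$. Then $c=1$, so $\delta(\varphi)=0$.

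\emph{Only-if direction.} Suppose $c=\partial a$ for a continuous cochain $a:\Gamma_k\to\mu_n$. Replace $\tilde\varphi$ by $a^{-1}\tilde\varphi$, with the sign convention chosen to match. By centrality the new $2$-cocycle is $c\cdot(\partial a)^{-1}=1$, so the new lift is a genuine cocycle in $Z^1(k,G_\beta)$ mapping to $\varphi$.

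The main care point is bookkeeping: the sign and ordering conventions in $\partial a$, and the fact that $\mu_n$ carries a nontrivial Galois action. Centrality together with the compatibility of the Galois actions on $\mu_n$ and $G_\beta$ is what lets the factor $\sigma(a(\tau))$ commute past the other terms. Finally, I would note that $\H^2(k,\mu_n)\to\Br k$ is injective, which follows from Hilbert 90 and the Kummer sequence. Hence vanishing of $\beta(\varphi)$ in $\Br k$ is the same as vanishing in $\H^2(k,\mu_n)$, and the argument above applies.
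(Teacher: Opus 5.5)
Your proposal is correct and uses the same argument as the paper. The paper simply cites exactness at $\H^1(k,G)$ of the long exact sequence of pointed sets for the central extension (\cite[\S I.5.7]{Ser02}); you unwind the construction of $\delta$ and check that exactness by hand, and your remark that $\H^2(k,\mu_n)\to\Br k$ is injective matches the identification $\H^2(k,\mu_n)=\Br k[n]$ used in Definition~\ref{def:evaluate}.
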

\begin{proof}
	Immediate from the long exact sequence in cohomology of pointed sets.
\end{proof}

\begin{example} \label{ex:Q8}
\hfill
	\begin{enumerate} 
	\item Consider the central extension $\beta: 1 \to C_2 \to C_4 \to C_2 \to 1$. For $a \in k^{\times}/ k^{\times 2} \cong  \HH^1(k, C_2)$ we have $\beta(a) = (-1, a)$, where the brackets on the right-hand side denote the corresponding quaternion algebra. See \cite[Thm.~1.2.4]{Ser08} and in particular the subsequent proof.
	
	Note that if $-1 \in k^{\times 2}$ then $\beta(a) = 0$ for all $a \in \HH^1(k, C_2)$. This is an example of an \emph{algebraic} Brauer element (see \S \ref{sec:algebraic_Brauer_group}).
	 	
	\item For a more involved example we consider the Brauer element $\beta: 1 \to \mu_2 \to  Q_8 \to V_4 \to 1$. Consider an element $(a,b) \in  (k^{\times}/k^{\times 2})^2 \cong \HH^1(k, C_2)^2 \cong  \HH^1(k, V_4)$. Then $\beta((a,b)) = (-1, -1) + (-a, -b) \in \Br k$; this follows from the analysis in \cite[\S VI]{Witt1936}. 
	
    In this case $\beta$ is not algebraic; it is \emph{transcendental} (see \S \ref{sec:transcendental_Brauer_group}).
	\end{enumerate}
\end{example}

Recall from Definition \ref{def:integral_point} that $BG(\O_v)$ denotes the groupoid of unramified cocycles. The following is standard for Brauer groups of varieties \cite[Prop.~13.3.1]{CT21}.

\begin{lemma} \label{lem:trivial_on_O_v}
	Let $\beta \in \Bre BG$. Then $\beta$ evaluates trivially on $BG(\O_v)$ for all but finitely many $v$.
\end{lemma}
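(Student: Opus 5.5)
First represent $\beta$ by a central extension $1 \to \mu_n \to G_\beta \to G \to 1$ of \emph{finite} $\Gamma_k$-groups, using Lemma \ref{lem:exp}. Since $G_\beta$ is a finite $\Gamma_k$-group, the action of $\Gamma_k$ on it factors through a finite quotient. Hence $G_\beta$ is unramified at all but finitely many places. Enlarging this finite set to include the archimedean places, the places dividing $n|G_\beta|$, and the places where $G$ is ramified, we may assume $v$ is good for both $G$ and $G_\beta$, and that $\mu_n$ is unramified at $v$ (automatic once $v \nmid n$). It then suffices to show that for such $v$ the connecting map $\delta:\H^1(k_v,G)\to \H^2(k_v,\mu_n)$ kills $\H^1_{\mathrm{nr}}(k_v,G)$.

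The key step is to lift unramified cocycles. Let $\varphi \in Z^1_{\mathrm{nr}}(k_v,G)$ and let $\sigma$ be a topological generator of $\Gal(k_v^{\mathrm{nr}}/k_v)$. By Lemma \ref{lem:unramified_cocycle}, $\varphi$ is determined by $g:=\varphi(\sigma)\in G$. Choose any preimage $\tilde g \in G_\beta$ of $g$. Since $G_\beta$ is finite and unramified at $v$, Lemma \ref{lem:unramified_cocycle} applied to $G_\beta$ gives an unramified cocycle $\tilde\varphi \in Z^1_{\mathrm{nr}}(k_v,G_\beta)$ with $\tilde\varphi(\sigma)=\tilde g$. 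Its image in $Z^1_{\mathrm{nr}}(k_v,G)$ is an unramified cocycle taking the value $g$ at $\sigma$, so by the injectivity in Lemma \ref{lem:unramified_cocycle} this image equals $\varphi$. Thus $\varphi$ lies in the image of $\H^1(k_v,G_\beta)\to\H^1(k_v,G)$, and Lemma \ref{lem:evalutation_map_trivial_embedding_problem} (applied over $k_v$) gives $\beta(\varphi)=0$.

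This argument is really the vanishing of $\H^2$ of $\hat{\Z}$ in a non-abelian disguise, and Lemma \ref{lem:unramified_cocycle} handles it cleanly. The only point needing care, and the main obstacle, is the reduction to a finite representative with the required unramifiedness. One has to check that the evaluation of $\beta$ does not depend on the chosen finite representative. This follows because the connecting maps are compatible with the pushout diagram \eqref{eqn:push_out_mu_infty} and with $\H^2(k_v,\mu_n)\to\H^2(k_v,\mu_\infty)=\Br k_v$. One also has to verify that $G_\beta$ inherits unramifiedness at almost all $v$, which holds by the finiteness of $G_\beta$ noted above.
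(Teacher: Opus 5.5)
Your proposal is correct and is essentially the paper's proof. The paper also takes $v$ with $G_\beta$ unramified and uses Lemma \ref{lem:unramified_cocycle} to lift the image of a topological generator, so that $BG_\beta(\O_v)\to BG(\O_v)$ is surjective, and then concludes with Lemma \ref{lem:evalutation_map_trivial_embedding_problem}. You spell out two points the paper leaves implicit: passing to a finite representative with kernel $\mu_n$ (needed, since a $\mu_\infty$-extension is ramified at every finite place), and independence of the evaluation from the choice of representative.
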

\begin{proof}
	Let $v$ be a place such that the $\Gamma_{k_v}$-group $G_\beta$ is unramified, in the sense of Definition \ref{def:unramified}.
	The map $BG_{\beta}(\mathcal{O}_v) \to BG(\mathcal{O}_v)$ is then surjective by Lemma \ref{lem:unramified_cocycle}, since we can lift the image of a topological generator from $G$ to $G_\beta$. This implies the result by Lemma \ref{lem:evalutation_map_trivial_embedding_problem}.
\end{proof}

\subsection{Markings}

We will make extensive use of the following extra structure on central extensions. 

\begin{definition} \label{def:marked_central_extension}
	Let $C \subset G(-1) \setminus \{e\}$ be a conjugacy and Galois invariant subset and $1 \to \mu_n \to G_{\beta} \to G \to 1$ a central extension. A $C$-\emph{marking on $G_{\beta}$} is a conjugacy and Galois invariant subset $A \subset G_{\beta}(-1)$ such that $A \to C$ is a (Galois equivariant) bijection.
	
	Let $\mathcal{C} \subset \mathcal{C}_G$ be the set of conjugacy classes contained in $C$. Note that $\mathcal{C}$ completely determines $C$, so we will also call these a $\mathcal{C}$-\emph{marking}.
	We denote by $\Br^{\mathrm{e}}_{\mathcal{C}} BG$ the collection of central extensions which admit a $\mathcal{C}$-marking. We call this the partially unramified Brauer group of $BG$ with respect to $\mathcal{C}$.
	
	A \emph{geometric marking} is a marking over $k^{\sep}$; this is a subset as above but without the requirement to be Galois invariant.
\end{definition}

This definition looks fairly mysterious. These Brauer groups $\Br^{\mathrm{e}}_{\mathcal{C}} BG$ play the role of the Brauer group of open subsets $U \subset X$ of a Fano variety as in \eqref{eqn:adjoint_rigid}. However $BG$ has no non-trivial open subsets so one cannot mimic this definition directly. The subset $\mathcal{C}$ plays the role of some kind of boundary divisor on a hypothetical compactification of $BG$. 

\begin{definition}
	In the case $C = G(-1) \setminus \{e\}$ we call the marking \emph{complete} and denote $\Brun BG := \Br^{\mathrm{e}}_{\mathcal{C}_G^*} BG$ which we call the \emph{unramified Brauer group}.
\end{definition}

We emphasise that here we only ask for \textit{existence} of a marking; we do not view the choice of marking as part of the data. In particular, the elements of $G_\beta(-1)$ admit a natural translation action of $\Z/n\Z = \mu_n(-1)$ given by 
\begin{equation} \label{def:torsor_action}
	G_\beta(-1) \times \Z/n\Z \to G_\beta(-1), \quad (\gamma,q) \mapsto \gamma + q:=(\,\zeta \mapsto	 \gamma(\zeta) \cdot \iota(\zeta^q)\,)
\end{equation}
where $\iota: \mu_n \to G_\beta$ is the central embedding of $\mu_n$ into $G_\beta$.
We study how this acts on markings; Recall that a \textit{torsor} under a group action is a non-empty set upon which the group acts freely and transitively.

\begin{lemma} \label{lem:Q/Z-torsor}
	Let $c \in \mathcal{C}_G$ and $\beta$ as in \eqref{def:central_extension_mu_n}.
	Then the collection of geometric 
	$c$-markings is a torsor
	under $\Z/n\Z$, provided it is non-empty. 
\end{lemma}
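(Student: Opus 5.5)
The plan is to reduce everything to the fibres of the projection $\pi: G_\beta(-1) \to G(-1)$ induced by $G_\beta \to G$. Write $C \subset G(-1)$ for the set of elements in the class $c$. By definition a geometric $c$-marking is a conjugacy invariant subset $A \subset G_\beta(-1)$ with $\pi|_A : A \to C$ a bijection. Since there are no Galois conditions, we may fix a topological generator $(\zeta_m)_m$ of $\dual{\Z}(1)$ and identify $G_\beta(-1)$ with $G_\beta$ and $G(-1)$ with $G$, as in Lemma \ref{lem:Galois_action_on_G(-1)}, forgetting the Galois action. Under this identification the translation action \eqref{def:torsor_action} becomes multiplication by the central element $\iota(\zeta_n^q)$. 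This is because a homomorphism from the procyclic group $\dual{\Z}(1)$ into a finite group is determined by, and may be chosen freely as, the image of the generator.

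\textbf{Step 1 (fibres).} Each fibre $\pi^{-1}(\gamma)$ corresponds to a coset $\tilde g\,\iota(\mu_n)$ of the central subgroup $\iota(\mu_n)$, where $\tilde g$ lifts $g=\gamma(\zeta)$. Therefore $\Z/n\Z$ acts freely and transitively on each fibre via $\gamma \mapsto \gamma + q$.

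\textbf{Step 2 (translation commutes with conjugation).} Since $\iota(\mu_n)$ is central, conjugating by $h \in G_\beta$ gives $h(\gamma+q)h^{-1} = (h\gamma h^{-1}) + q$. Moreover the conjugation action of $G_\beta$ on $G_\beta(-1)$ factors through $G$, again by centrality. Hence if $A$ is a geometric $c$-marking, so is $A+q$: it is conjugacy invariant, and $\pi(\gamma+q)=\pi(\gamma)$ shows that $\pi|_{A+q}$ is still a bijection onto $C$. This gives a well-defined action of $\Z/n\Z$ on the set of markings.

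\textbf{Step 3 (freeness).} Suppose $A + q = A$, and pick $a\in A$. Then $a$ and $a+q$ both lie in $A$ over the same point of $C$. Injectivity of $\pi|_A$ forces $a+q=a$, so $q=0$ by Step 1.

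\textbf{Step 4 (transitivity).} Let $A, A'$ be two markings, and fix $\gamma_0\in C$ together with its unique lifts $a \in A$ and $a'\in A'$. By Step 1 there is a unique $q$ with $a' = a+q$. Any other $\gamma\in C$ has the form $g\gamma_0 g^{-1}$; choose a lift $\tilde g\in G_\beta$ of $g$. By conjugacy invariance and injectivity, the unique element of $A$ over $\gamma$ is $\tilde g a\tilde g^{-1}$, and the unique element of $A'$ over $\gamma$ is $\tilde g a'\tilde g^{-1}$. By Step 2, $\tilde g a'\tilde g^{-1}=(\tilde g a \tilde g^{-1})+q$. Hence $A'=A+q$.

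The only point needing care is the bookkeeping in Steps 1 and 2: identifying the translation action on $G_\beta(-1)$ with multiplication by central elements, and checking that conjugation on $G_\beta(-1)$ is compatible with that on $G(-1)$. Once this is set up, the torsor statement is formal.
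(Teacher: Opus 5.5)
Your proof is correct and follows the same route as the paper. Translation preserves markings and acts freely because $\iota(\mu_n)$ is central and lies in the kernel; for transitivity one translates so that the two markings share an element and then uses that each marking is a single conjugacy class. Your Step 4 is exactly this argument, made explicit by propagating the common point via conjugation, and your Steps 1--2 spell out the fibre and centrality bookkeeping that the paper leaves as ``clear''.
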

\begin{proof}
	As $\mu_n \subset G_\beta$ is central and lies in the kernel, it is clear that a geometric marking is sent 
	to a geometric marking. The action is also clearly free. For transitivity, let $A_1$ and 
	$A_2$ be geometric markings. After translating, we may assume that $A_1 \cap A_2 \neq \emptyset$.
	In which case $A_1 = A_2$. Indeed, as $c$ is a conjugacy class so 
	are $A_1$ and $A_2$, and any intersecting conjugacy classes are equal.
\end{proof}

The following fundamental theorem \cite[Thm.~7.4]{LS24} gives an arithmetic classification, via the partial adelic spaces from Definition \ref{def:adelic_space}, which makes clear their arithmetic significance even if Definition \ref{def:marked_central_extension} itself is mysterious. It is a strengthening of Lemma \ref{lem:trivial_on_O_v} provided a suitable marking exists.

\begin{theorem} \label{thm:trivial_on_O_v_R}
	Let $\beta \in \Bre BG$ and $\mathcal{C} \subset \mathcal{C}_G$ be Galois invariant. Then $\beta \in \Br^{\mathrm{e}}_{\mathcal{C}} BG$ if and only if $\beta$ evaluates trivially on $BG(\O_v)_{\mathcal{C}}$ for all but finitely many places $v$.
\end{theorem}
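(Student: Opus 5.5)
The plan is to translate both directions into lifting statements, using Lemma~\ref{lem:evalutation_map_trivial_embedding_problem} and the Stacky Hensel's Lemma (Theorem~\ref{thm:hensel}). Represent $\beta$ by $1 \to \mu_n \to G_\beta \to G \to 1$. Let $S$ be the finite set of places that are bad for $G_\beta$ (this includes those dividing $n$). For $v \notin S$, Theorem~\ref{thm:hensel} applies to both $G$ and $G_\beta$. So $\beta(\varphi_v)=0$ exactly when the pair $(\gamma,F)$ attached to $\varphi_v$ in $[G(-1)/G](\F_v)$ lifts to a pair $(\tilde\gamma,\tilde F)$ in $[G_\beta(-1)/G_\beta](\F_v)$. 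Such a lift is $\tilde\gamma \in G_\beta(-1)$ over $\gamma$ and $\tilde F \in G_\beta$ over $F$ with $\tilde\gamma = \tilde F\,\Frob_v(\tilde\gamma)\,\tilde F^{-1}$. Here we use that the reduction maps are compatible with $G_\beta \to G$. The case $\rho_{G,v}(\varphi_v)=e$ is already handled by Lemma~\ref{lem:trivial_on_O_v}. So it suffices to treat $\varphi_v$ whose ramification type lies in $\mathcal{C}$.

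\emph{Marking implies trivial evaluation.} Let $A \subset G_\beta(-1)$ be a $\mathcal{C}$-marking and let $(\gamma,F)$ have $\gamma$ in a class of $\mathcal{C}$. Take $\tilde\gamma\in A$ to be the unique element over $\gamma$, and let $\tilde F$ be any lift of $F$. Since $A$ is Galois and conjugacy invariant, $\tilde F\,\Frob_v(\tilde\gamma)\,\tilde F^{-1}$ lies in $A$. It also maps to $F\,\Frob_v(\gamma)\,F^{-1}=\gamma$. Because $A\to C$ is a bijection, this element equals $\tilde\gamma$. Hence the pair lifts, and $\beta$ is trivial on $BG(\O_v)_{\mathcal{C}}$ for all $v\notin S$.

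\emph{Trivial evaluation implies a marking.} Fix $c\in\mathcal{C}$ with representative $\gamma$. There are two steps.

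First, I show that a geometric $c$-marking exists. It suffices that for every $g$ in the centraliser of $\gamma$, some (equivalently any) lifts $\tilde g,\tilde\gamma$ commute. The commutator lies in $\mu_n(-1)$ and is independent of the choice of lifts. Granting this, the $G_\beta$-conjugacy class of $\tilde\gamma$ maps bijectively onto the class of $\gamma$. To prove it, I use Chebotarev to pick infinitely many $v\notin S$ that split completely in the splitting field of $G_\beta$ (so $\Frob_v$ acts trivially on $G_\beta(-1)$). For such $v$ the pair $(\gamma,g)$ is an object of $[G(-1)/G](\F_v)$. By Theorem~\ref{thm:hensel} it comes from some $\varphi_v\in BG(\O_v)_{\mathcal{C}}$. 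By hypothesis $\varphi_v$ lifts, so there is a pair $(\tilde\gamma,\tilde F)$ with $\tilde F$ over $g$ and $\tilde F\tilde\gamma\tilde F^{-1}=\tilde\gamma$. Thus the commutator vanishes.

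Second, I show the marking can be chosen Galois invariant. By Lemma~\ref{lem:Q/Z-torsor}, the geometric $c$-markings form a $\Z/n\Z$-torsor. The stabiliser $\Gamma_{k(c)}$ acts on it by translations; the translation action commutes with Galois because $\mu_n(-1)=\Z/n\Z$ carries trivial action. So the action is a continuous homomorphism $\chi_c\colon\Gamma_{k(c)}\to\Z/n\Z$, and I must show $\chi_c$ is trivial. Take $v\notin S$ with a degree-one place $w$ of $k(c)$ above it. Then $\Frob_v$ fixes $c$. By surjectivity of the ramification type (Remark~\ref{rem:surjective_ramification_type}) there is $\varphi_v$ of type $c$. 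Its lift gives $(\tilde\gamma,\tilde F)$ with $\tilde\gamma=\tilde F\,\Frob_v(\tilde\gamma)\,\tilde F^{-1}$. So $\Frob_v$ preserves the conjugacy class of $\tilde\gamma$, which is a geometric marking, and hence $\chi_c(\Frob_w)=0$. Degree-one places have density one among places of $k(c)$. By Chebotarev their Frobenii are dense in $\Gamma_{k(c)}$, so $\chi_c=0$. The marking of $c$ fixed by $\Gamma_{k(c)}$ is then transported along the Galois orbit of $c$. Taking the union over all orbits in $\mathcal{C}$ gives the required $\mathcal{C}$-marking.

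I expect the main obstacle to be this last step. One has to check that the Galois action on markings is genuinely a homomorphism to $\Z/n\Z$ (the $\cycl$-twists in the actions on $G(-1)$ and $\mu_n(-1)$ must cancel). One must also make sure the Frobenius used in Theorem~\ref{thm:hensel} matches the arithmetic Frobenius in $\Gamma_{k(c)}$.
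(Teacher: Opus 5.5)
Your proof is correct. The forward direction is the paper's argument verbatim. For the converse, the paper only says that one tracks the Galois actions and applies Chebotarev, and defers the details to the stack-theoretic proof of \cite[Thm.~7.4]{LS24}. Your argument is a genuinely different, self-contained route within the survey's elementary framework.

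In effect you prove two things about the residue of $\beta$ along the orbit of $c$ (Definition~\ref{def:residue}), and then let the construction in Lemma~\ref{lem:residue} assemble the $\mathcal{C}$-marking.
\begin{enumerate}
\item \emph{The residue is algebraic} (Step~1). The commutator obstruction in $\mu_n(-1)$ vanishes. You detect this at places split in a field trivialising $G_\beta(-1)$. There, every pair $(\gamma,g)$ with $g$ centralising $\gamma$ is already an object of $[G(-1)/G](\F_v)$.
\item \emph{The residue is trivial} (Step~2). The character $\chi_c$ kills the Frobenii at degree-one places of $k(c)$, and these Frobenii are Chebotarev-dense.
\end{enumerate}
This buys a proof using only Theorem~\ref{thm:hensel}, Lemma~\ref{lem:Q/Z-torsor} and Chebotarev. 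The stacky proof instead places the result in the general residue formalism of \cite{LS24}, where transcendental residues are also defined intrinsically.

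The obstacle you flag is harmless. Since $\iota\colon\mu_n\to G_\beta$ is Galois-equivariant, one has $\sigma(\tilde\gamma+q)=\sigma(\tilde\gamma)+q$, so the $\cycl$-twists cancel and $\chi_c$ is a homomorphism. Moreover, the $\Frob_v$ in the Hensel groupoid is just the action of any Frobenius lift in $\Gamma_{k_v}$, since the action on $G_\beta(-1)$ is unramified for $v\notin S$. That is all you use.

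Two small slips need fixing.
\begin{enumerate}
\item In Step~1 the field must contain $\mu_{\exp(G_\beta)}$ as well as trivialise the action on $G_\beta$. Otherwise $\Frob_v$ need not act trivially on $G_\beta(-1)$, and $\tilde\gamma=\tilde F\,\Frob_v(\tilde\gamma)\,\tilde F^{-1}$ would not give commutation.
\item In Step~2, a degree-one place $w$ of $k(c)$ above $v$ only guarantees that $\Gamma_{k_v}$ fixes some conjugate $c'=\tau^{-1}c$. Which conjugate depends on how $w$ sits relative to the fixed embedding $k^{\sep}\subset k_v^{\sep}$. Run your argument for $c'$. Then use $\chi_{\tau c'}(\tau\sigma\tau^{-1})=\chi_{c'}(\sigma)$, which holds because $\tau$ carries $c'$-markings to $c$-markings, to conclude $\chi_c(\Frob_w)=0$.
\end{enumerate}
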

\begin{proof}[Proof sketch]
	The Brauer element $\beta$ is represented by a finite central extension $1 \to \mu_n \to G_{\beta} \to G \to 1$. Let $v$ be a tame non-archimedean place coprime to $n$. We will work with $[G(-1)/G](\F_v)$ instead of $BG(k_v)$, which is fine by Theorem \ref{thm:hensel}. Let $C \subset G(-1)$ be the union of conjugacy classes in $\mathcal{C}$.
		
	Assume that $\beta$ admits a marking. Let $(\gamma, F) \in [G(-1)/G](\F_v)$ with $\gamma \in C$ (which translates to the corresponding cocycle lying in $BG(\O_v)_{\mathcal{C}}$). The element $\beta$ evaluates trivially on $(\gamma, F)$ if and only if $(\gamma, F)$ lifts to a pair $(\gamma_{\beta}, F_{\beta}) \in [G_{\beta}(-1)/G_{\beta}](\F_v)$, by Lemma \ref{lem:evalutation_map_trivial_embedding_problem}. However, we may take $\gamma_{\beta}$ to be the marked element lying above $\gamma$ and $F_{\beta}$ any lift of $F$. The elements $\gamma_{\beta}$ and $F_{\beta} \Frob_v(\gamma_{\beta}) F_{\beta}^{-1}$ are both contained in the marking and map to the same elements since $(\gamma, F) \in [G(-1)/G](\F_v)$. They are thus equal, i.e.~$(\gamma_{\beta}, F_{\beta})$ is an element of $[G_{\beta}(-1)/G_{\beta}](\F_v)$. It follows that $\beta$ evaluates trivially on $(\gamma,F)$, as required.
	
	For the reverse implication, one needs to rule out the existence of a marking. This requires keeping careful track of the Galois actions and a suitable application of the Chebotarev density theorem. For a proof via stacks, see \cite[Thm.~7.4]{LS24}.
\end{proof}


\subsection{Residues} \label{sec:residues}

It is convenient to have a way to measure the failure of the existence of a marking. This is provided by the \emph{residue} of a Brauer group element. The residue is defined in more generality in \cite[Def.~5.25]{LS24}. We give a down-to-earth description in terms of central extensions.

Let $\beta$ be a Brauer group element represented by a central extension as in \eqref{def:central_extension_mu_n}. Let $\mathcal{C} \subset \mathcal{C}_G^*$ be a Galois orbit and $C \subset G(-1)$ be the corresponding conjugacy invariant subset. Then the first check for the existence of a $\mathcal{C}$-marking is whether there exists a conjugacy invariant subset $A \subset G_{\beta}(-1)$ which induces a bijection $A \to C$; i.e.~forgetting the Galois action. Recall from Definition \ref{def:marked_central_extension} that this is called  a \emph{geometric marking}.

We can associate to a geometric marking a $\Z/n\Z$-torsor via Lemma \ref{lem:Q/Z-torsor}. More precisely, let $c \in \mathcal{C}$ and recall that $k(c)$ denotes the residue field of $c$, as in Definition \ref{def:residue_field}, so that the Galois group $\Gamma_{k(c)}$ fixes $c$. For $A$ to be a marking, we require $A$ to be also invariant under the action of $\Gamma_{k(c)}$. We test this via an element of $\H^1(k(c),\Q/\Z)$ as follows: For each $\sigma \in \Gamma_{k(c)}$ we obtain a possibly different geometric marking $\sigma(A)$. However by Lemma~\ref{lem:Q/Z-torsor} this differs from the original marking by a unique element of $\Z/n\Z$, i.e.~there exists a  unique $q_\sigma \in \Z/n\Z$ such that $\sigma(A) = A + q_\sigma$. Moreover the centrality of $\mu_n$ implies that $q_{\sigma \tau} = q_{\sigma} + q_{\tau}$ for all $\sigma,\tau \in \Gamma_{k(c)}$. We thus obtain a well-defined homomorphism $\Gamma_{k(c)} \to \Z/n\Z$, i.e.~an element of $\H^1(k(c),\Z/n\Z) \subset \H^1(k(c),\Q/\Z)$, which by Lemma~\ref{lem:Q/Z-torsor} is independent of the choice of $A$. We formalise this process using the following definition.






\begin{definition} \label{def:residue}
	Let $\mathcal{C} \subset \mathcal{C}_G$ be a Galois orbit. We define the residue $\partial_{\mathcal{C}}(\beta)$ of $\beta$ along $\mathcal{C}$ as follows. If there is no geometric marking, then by convention we simply say that the residue $\partial_{\mathcal{C}}(\beta)$ is \textit{transcendental}.
	
	If a geometric marking exists, then we say that the residue $\partial_{\mathcal{C}}(\beta)$ is \textit{algebraic}
	and we define $\partial_{\mathcal{C}}(\beta) \in \H^1(k(c),\Q/\Z)$ to be the element constructed above for some choice of $c \in \mathcal{C}$.
\end{definition}

Note that the isomorphism class of the field $k(c)$ is independent of the choice of $c \in \mathcal{C}$. We do not require a precise definition of the residue in the transcendental case in this survey, but rather whether the residue is transcendental or not.

\begin{lemma} \label{lem:residue}
	A $\mathcal{C}$-marking exists if and only if the residue is algebraic and trivial.
\end{lemma}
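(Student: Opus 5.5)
The statement concerns a single Galois orbit $\mathcal{C}$, with $C \subset G(-1)$ the corresponding conjugacy and Galois invariant subset. The plan is to unwind the definitions and use the torsor structure from Lemma \ref{lem:Q/Z-torsor}. A $\mathcal{C}$-marking is a conjugacy and Galois invariant $A \subset G_\beta(-1)$ mapping bijectively to $C$. In particular it is a geometric marking. So if a marking exists, the residue is algebraic by Definition \ref{def:residue}.

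\emph{Forward direction.} Suppose $A$ is a $\mathcal{C}$-marking and fix $c \in \mathcal{C}$. Let $A_c \subset A$ be the part lying over the conjugacy class $c$; it is a geometric $c$-marking. Since $A$ is Galois invariant and $\Gamma_{k(c)}$ fixes $c$, we get $\sigma(A_c) = A_c$ for all $\sigma \in \Gamma_{k(c)}$. Hence $q_\sigma = 0$ for all such $\sigma$, by freeness of the $\Z/n\Z$-action in Lemma \ref{lem:Q/Z-torsor}. So the residue $\partial_{\mathcal{C}}(\beta)$ is trivial.

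\emph{Reverse direction.} Suppose a geometric marking exists and the residue is trivial. Fix $c \in \mathcal{C}$ and let $A_c$ be a geometric $c$-marking, i.e.\ the part over $c$ of a geometric marking. By triviality of the residue, $A_c$ is $\Gamma_{k(c)}$-stable. Define
\[
A := \bigcup_{\sigma \in \Gamma_k/\Gamma_{k(c)}} \sigma(A_c).
\]
This is well defined because $A_c$ is $\Gamma_{k(c)}$-stable. It is Galois invariant by construction. It is conjugacy invariant because the Galois action on $G_\beta(-1)$ commutes with conjugation up to relabelling: $\sigma(g\gamma g^{-1}) = \sigma(g)\sigma(\gamma)\sigma(g)^{-1}$, so each $\sigma(A_c)$ is a single conjugacy class.

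It remains to check that $A \to C$ is a bijection. The map $G_\beta(-1) \to G(-1)$ is Galois equivariant, so $\sigma(A_c)$ lies over $\sigma(c)$. As $\sigma$ ranges over the cosets $\Gamma_k/\Gamma_{k(c)}$, the classes $\sigma(c)$ are pairwise distinct and exhaust the orbit $\mathcal{C}$, since $\Gamma_{k(c)}$ is exactly the stabiliser of $c$. Each $\sigma(A_c) \to \sigma(c)$ is a bijection, being the image under the bijection $\sigma$ of the bijection $A_c \to c$. Since the pieces lie over distinct classes, they are disjoint, and therefore $A \to C$ is a bijection.

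\emph{Main obstacle.} The only delicate point is bookkeeping: checking that the residue is independent of the choice of $c$ and $A$, so that "trivial" is meaningful. This was already ensured by Lemma \ref{lem:Q/Z-torsor} together with the fact that the torsor action commutes with the Galois action.
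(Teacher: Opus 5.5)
Your proof is correct and follows the paper's argument. The forward direction uses that the part of the marking lying over $c$ is $\Gamma_{k(c)}$-stable, and the reverse direction takes the $\Gamma_k$-orbit of a $\Gamma_{k(c)}$-stable geometric $c$-marking; you also spell out what the paper leaves implicit, namely restricting to the piece $A_c$ over $c$, well-definedness of the union over cosets, and bijectivity onto $C$ because the $\sigma(c)$ are distinct.
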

\begin{proof}
	If a marking exists then by definition the residue is algebraic. It is also trivial,
	since the restriction of the marking to $c$ 
	is invariant under the action of $\Gamma_{k(c)}$.
	
	For the reverse implication, let $c \in \mathcal{C}$ and $A \subset G_{\beta}(-1)$
	be a geometric marking. As the residue is trivial $A$ is invariant under the action of $\Gamma_{k(c)}$. Then the $\Gamma_k$-orbit of $A$ is now a $\mathcal{C}$-marking in the sense of Definition \ref{def:marked_central_extension}.
\end{proof}

\subsection{The Brauer--Manin obstruction}
Recall the famous counter-example of Wang \cite{Wan48} to Grunwald's Theorem: There is no $\Z/8\Z$-extension of $\Q$ which realises the unramified $\Z/8\Z$-extension of $\Q_2$. A key difficulty in formulating a conjecture for the leading constant in Malle's conjecture is that one would like the leading constant to have an adelic interpretation, so should take into account such obstructions. In Example \ref{ex:Grunwald-Wang} we show that Wang's counter-example can be interpreted as a \textit{Brauer--Manin obstruction} on the stack $B(\Z/8\Z)$.

The Brauer--Manin obstruction was first introduced by Manin \cite{Man71} to use reciprocity laws to give obstructions to the Hasse principle on a variety $X$ over a number field $k$; this asks whether $X(\Adele_k) \neq \emptyset$ implies that $X(k) \neq \emptyset$. At the time it explained all known counter-examples to the Hasse principle. It was subsequently generalised to give obstructions to weak and strong approximation; this concerns the density properties of $X(k)$ in $\prod_v X(k_v)$ and $X(\Adele_k)$ respectively (see \cite[\S 13.3]{CT21} for further background). Since $BG(k) \neq \emptyset$ it is the latter properties we shall be interested in, but see \cite[\S 7.6]{LS24} and \cite{LT26} for examples of failures of the Hasse principle relevant to Malle's conjecture.

The Brauer--Manin set appears in  Peyre's constant \eqref{eqn:Peyre}, and it is this approach we take for the leading constant in Malle's conjecture. Fundamental to the Brauer--Manin obstruction is the short exact sequence

\begin{equation} \label{seq:fundamental_CFT}
	0 \to \Br k \to \bigoplus_v \Br k_v \to \Q/\Z \to 0
\end{equation}
from class field theory \cite[\S13.1.2]{CT21}. This exact sequence encodes all possible abelian reciprocity laws. For example on the level of $2$-torsion, it recovers Hilbert's reciprocity law for the Hilbert symbol, which for $k = \Q$ is equivalent to quadratic reciprocity. In this exact sequence, the final map is the sum over all local invariant maps $\inv_v \Br k_v: \Br k_v \to \Q/\Z$. These maps are isomorphisms for $v$ non-archimedean. In the archimedean case $\inv_v$ is an embedding and $\Br \C = 0$ and $\Br \R = \Z/2\Z$.

\begin{definition} \label{def:BM_pairing}
	We define the Brauer--Manin pairing via
	$$\langle \cdot, \cdot \rangle_{\mathrm{BM}} : \Bre BG \times BG(\Adele_k) \to \Q/\Z, \quad (\beta, (\varphi_v)) \mapsto \sum_v \inv_v \beta(\varphi_v).$$
\end{definition}

To be completely explicit recall from Definition \ref{def:evaluate} how to evaluate a Brauer group element $\beta$ via connecting map in Galois cohomology. Then $\langle \beta, (\varphi_v) \rangle_{\mathrm{BM}}$ is equal to the image of $(\varphi_v)$ with respect to the composition
\[
BG(\Adele_k) \to \bigoplus_v \H^2(k_v,\mu_n) = \bigoplus_v \Br(k_v) \to \Q/\Z
\] 
where the last map is the sum over all local invariants. It is clear from \eqref{seq:fundamental_CFT} that any element of $BG[k]$ lies in the left kernel of the Brauer--Manin pairing.

The sum which appears in Definition \ref{def:BM_pairing} is finite by Lemma \ref{lem:trivial_on_O_v}. Note that there is not a well-defined pairing between $\Bre BG$ and $\prod_v BG(k_v)$ in general. Still, given Theorem \ref{thm:trivial_on_O_v_R}, we can obtain a well-defined pairing providing we impose markings on the Brauer group elements.

\begin{lemma} \label{lem:Brauer-Manin_well_defined}
	Let $\mathcal{C} \subset \mathcal{C}_G$ be Galois invariant. Then the pairing
	$$\Br^{\mathrm{e}}_{\mathcal{C}} BG \times BG(\Adele_k)_{\mathcal{C}} \to \Q/\Z, \quad (\beta, (\varphi_v)) \mapsto \sum_v \inv_v \beta(\varphi_v)$$
	is well-defined and continuous on the right. In particular, the pairing 
	$$\Brun BG \times \prod_v BG(k_v), \quad (\beta, (\varphi_v)) \mapsto \sum_v \inv_v \beta(\varphi_v)$$
	is well-defined.
\end{lemma}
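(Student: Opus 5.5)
The plan is to reduce everything to Theorem \ref{thm:trivial_on_O_v_R}. Fix $\beta \in \Br^{\mathrm{e}}_{\mathcal{C}} BG$. By that theorem there is a finite set of places $T$ such that $\beta(\varphi_v) = 0$ for every $v \notin T$ and every $\varphi_v \in BG(\O_v)_{\mathcal{C}}$. Now take $(\varphi_v) \in BG(\Adele_k)_{\mathcal{C}}$. By definition of the restricted product there is a finite set $S$, containing the bad places, with $\varphi_v \in BG(\O_v)_{\mathcal{C}}$ for all $v \notin S$. Hence $\inv_v \beta(\varphi_v) = 0$ for $v \notin S \cup T$. The sum in the pairing is therefore finite, so the pairing is well-defined.

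We should also check that $\beta(\varphi_v)$ only depends on the isomorphism class of $\varphi_v$ and on the class of $\beta$. The first holds because the connecting map $\delta$ is defined on $\H^1$. For the second, we pass to the $\mu_\infty$-pushout \eqref{eqn:push_out_mu_infty}: the connecting map into $\H^2(k_v,\mu_\infty) = \Br k_v$ is compatible with pushout, so the value does not depend on the chosen finite representative. Equivalent extensions give the same connecting map.

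For continuity on the right, fix $\beta$ and $T$ as above. The topology on $BG(\Adele_k)_{\mathcal{C}}$ is the direct limit topology over $S$, so it suffices to prove continuity on each piece
\[
U_S := \prod_{v \in S} BG(k_v) \prod_{v \notin S} BG(\O_v)_{\mathcal{C}}.
\]
Enlarging $S$, we may assume $T \subseteq S$. On $U_S$ the pairing equals $\sum_{v \in S} \inv_v \beta(\varphi_v)$. This factors through the projection $U_S \to \prod_{v\in S} BG[k_v]$, which is a finite discrete set. The map is therefore locally constant, hence continuous for the discrete topology on $\Q/\Z$ (or any topology).

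The final statement is the special case $\mathcal{C} = \mathcal{C}_G^*$, where $\Br^{\mathrm{e}}_{\mathcal{C}} BG = \Brun BG$. In this case $BG(\O_v)_{\mathcal{C}} = BG(k_v)$ at good places, since every ramification type lies in $\mathcal{C}_G^* \cup \{e\}$. So $BG(\Adele_k)_{\mathcal{C}_G^*} = \prod_v BG(k_v)$, and the claim follows from the first part.

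The only substantive input is Theorem \ref{thm:trivial_on_O_v_R}, and specifically its easy direction: a marking forces trivial evaluation at almost all places. I expect the main point requiring care to be the independence from the choice of $\mu_n$-representative of $\beta$, handled by the pushout compatibility noted above.
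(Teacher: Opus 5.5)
Your proof is correct and takes the same route as the paper, whose entire proof is that the lemma is immediate from Theorem~\ref{thm:trivial_on_O_v_R}. You unpack that one line: the sum is finite by the easy (marking implies trivial evaluation) direction, the pairing is locally constant on each open piece $U_S$ of the restricted product, and the case $\mathcal{C}=\mathcal{C}_G^*$ gives the full product; your check that the evaluation does not depend on the chosen $\mu_n$-representative is a harmless extra that the paper takes for granted in Definition~\ref{def:evaluate}.
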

\begin{proof}
	Immediate from Theorem \ref{thm:trivial_on_O_v_R}.
\end{proof}

\begin{definition}
	For subsets $W \subset BG(\Adele_k)_{\mathcal{C}}$ and $\mathscr{B} \subset \Br^{\mathrm{e}}_{\mathcal{C}} BG$ we denote by
	$$ W^{\mathscr{B}} := \{ \varphi \in W: \langle b, \varphi \rangle_{\mathrm{BM}} = 0 \text{ for all }b \in \mathscr{B} \}$$
	the associated \textit{Brauer--Manin set}. 
	We say that $\mathscr{B}$ \emph{induces a Brauer--Manin obstruction on $W$} if $W \neq W^{\mathscr{B}}$.
	If clear from context, we denote by 
	$BG(\Adele_k)_{\mathcal{C}}^{\Br} := BG(\Adele_k)_{\mathcal{C}}^{\Br^{\mathrm{e}}_{\mathcal{C}} BG}$.
\end{definition}

For an example of the Brauer--Manin obstruction, see Example \ref{ex:Grunwald-Wang}.

\subsection{Algebraic Brauer group elements} \label{sec:algebraic_Brauer_group}
We define the \emph{algebraic Brauer group} of $BG$ to be
\begin{equation}
	\Br^{\mathrm{e}}_1 BG := \ker(\Bre BG \to \Bre BG_{k^{\sep}})
\end{equation}
where $BG_{k^{\sep}}$ denotes $BG$ viewed over the separable closure of $k$. The advantage of the algebraic Brauer group is that it is easier to calculate and work with. Also in practice algebraic Brauer group elements are sufficient for most applications. There is a completely explicit construction of algebraic Brauer group elements.

\begin{definition} \label{def:algebraic_central_extension}
	Let $n$ be coprime to $p$ and $\alpha \in \H^1(k, \Hom(G, \mu_n))$. We define the central extension
	$$1 \to \mu_n \to E_{\alpha} \to G \to 1$$
	by considering the abstract group $E_{\alpha} = \mu_n \times G$ but equipping it with the twisted Galois action $\sigma \cdot_{\alpha} (\zeta, g) = (\sigma(\zeta) \cdot \alpha(\sigma)(\sigma(g)), \sigma(g))$ for $\sigma \in \Gamma_k$ and $(\zeta, g) \in E_{\alpha} := \mu_n \times G$.
\end{definition}
It is straightforward but tedious to check that the map $\alpha \to E_{\alpha}$ is additive with respect to the Baer sum.

\begin{lemma}\label{lem:algebraic_Brauer_H1_Pic}
	Let $n \in \N$ with $\exp(G) \mid n$. The map 
	$$\HH^1(k,\dual{G}) = \HH^1(k, \Hom(G, \mu_{n})) \to \Br^{\mathrm{e}}_1 BG$$
	is an isomorphism.
\end{lemma}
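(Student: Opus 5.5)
The plan is to prove well-definedness, injectivity and surjectivity of $\alpha \mapsto [E_\alpha]$ in turn, after first fixing the identification $\Hom(G,\mu_n) = \dual{G}$. Since $\exp(G) \mid n$, every homomorphism $G \to k^{\sep,\times}$ lands in $\mu_n$, and this identification is $\Gamma_k$-equivariant for the action $(\sigma\chi)(g) = \sigma(\chi(\sigma^{-1}g))$.

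\textbf{Well-definedness.} Using the twisted action $\sigma\cdot_\alpha$ of Definition \ref{def:algebraic_central_extension}, I will check three things.
\begin{enumerate}
\item The cocycle condition on $\alpha$ is exactly what makes $\sigma\cdot_\alpha$ a group action.
\item $\sigma\cdot_\alpha$ is by group automorphisms. This holds because each $\alpha(\sigma)$ is a homomorphism and $\mu_n$ is central.
\item Changing $\alpha$ by a coboundary $\sigma \mapsto \sigma(\chi)\chi^{-1}$ is absorbed by the automorphism $(\zeta,g)\mapsto(\zeta\chi(g),g)$ of $\mu_n\times G$. This automorphism is compatible with the inclusion of $\mu_n$ and the projection to $G$, so it gives an equivalence of extensions.
\end{enumerate}
The class lies in $\Br^{\mathrm e}_1 BG$ because over $k^{\sep}$ the Galois action disappears and $E_\alpha = \mu_n\times G$ splits. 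Additivity is stated in the text. So we obtain a homomorphism $\HH^1(k,\dual G)\to \Br^{\mathrm e}_1 BG$, where the target is understood via the pushout \eqref{eqn:push_out_mu_infty}.

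\textbf{Injectivity.} Suppose $[E_\alpha]$ is trivial. By Lemma \ref{lem:trivial_in_Brauer_group} there is a $\Gamma_k$-equivariant homomorphism $r: E_\alpha\to\mu_\infty$ that is the identity on $\mu_n$. As a map of abstract groups, $r(\zeta,g)=\zeta\chi(g)$ for some $\chi\in\Hom(G,\mu_\infty)$. Since $\exp(G)\mid n$, in fact $\chi \in \dual G$. Writing out equivariance of $r$ with respect to $\cdot_\alpha$ gives $\alpha(\sigma)=\chi\cdot\sigma(\chi)^{-1}$, which is a coboundary.

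\textbf{Surjectivity.} Take $\beta \in \Br^{\mathrm e}_1 BG$, represented by $1\to\mu_\infty\to G_\beta\to G\to1$. Triviality over $k^{\sep}$ means this extension splits as abstract groups (over $k^{\sep}$ the Galois action is irrelevant), so there is an isomorphism $G_\beta\cong\mu_\infty\times G$ compatible with the extension maps. Transporting the Galois action, write $\sigma(\zeta,g)=(\sigma(\zeta)c_\sigma(\sigma g),\sigma g)$. Because $\mu_\infty$ is central and the projection to $G$ is equivariant, $\sigma$ acts on $\mu_\infty$ by the usual action and $c_\sigma\in\Hom(G,\mu_\infty)=\dual G$. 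The action axiom then makes $\sigma\mapsto c_\sigma$ a cocycle.

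Two technical points remain, and I expect them to be the main obstacle.
\begin{enumerate}
\item \emph{Continuity of $c$.} The stabilisers of $(1,g)$ are open, and there are only finitely many $g$, so $c$ is continuous.
\item \emph{Matching pushouts.} One must show that the pushout of $E_c$ along $\mu_n\subset\mu_\infty$ recovers $G_\beta$. This amounts to identifying $\mu_\infty\times G$, carrying the $c$-twisted action, with that pushout, which I would check directly from the universal property.
\end{enumerate}
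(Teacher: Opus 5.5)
Your proposal is correct and follows the same route as the paper: injectivity via Lemma~\ref{lem:trivial_in_Brauer_group}, by writing the retraction as $(\zeta,g)\mapsto \zeta\chi(g)$ and reading off a coboundary from equivariance, and surjectivity by splitting the extension as abstract groups and extracting the cocycle $\sigma \mapsto c_\sigma$, with your well-definedness, continuity and pushout checks filling in details the paper leaves implicit. One harmless slip: equivariance gives $\alpha(\sigma) = \sigma(\chi)\chi^{-1}$ rather than $\chi\,\sigma(\chi)^{-1}$, and this is still a coboundary.
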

\begin{proof}
	For injectivity we use Lemma \ref{lem:trivial_in_Brauer_group}. Let $\chi: E_{\alpha} \to \mu_{\infty}$ be a homomorphism which is the identity on $\mu_n \subset E_{\alpha}$. Then as a morphism of abstract groups we have $\chi(\zeta, g) = \zeta \cdot \psi(g)$ for some homomorphism $\psi: G \to \mu_{\infty}$ of abstract groups. The condition on $n$ ensures that $\psi$ factors through $\mu_n$, i.e.~$\psi \in \Hom(G, \mu_n)$.
	
	The condition that $\chi$ is Galois equivariant implies that for all $\sigma \in \Gamma_k$ we have 
	\[
	\sigma(\zeta) \cdot \sigma(\psi)(\sigma(g))= \sigma(\chi(\zeta, g)) = \chi(\sigma \cdot_{\alpha} (\zeta, g)) = \sigma(\zeta) \cdot  \alpha(\sigma)(\sigma(g)) \cdot \psi(\sigma(g)). 
	\]
	
	This implies that $\alpha = \sigma(\psi) \psi^{-1}$, in other words, $\alpha$ is equal to the coboundary of $\psi$, so represents the trivial cohomology class.
	
	Let now $\beta \in \Br^{\mathrm{e}}_1 BG$. The assumption that $\beta$ is algebraic implies that the central extension splits as an extension of abstract groups. We thus have $G_{\beta} = \mu_{\infty} \times G$ as an abstract group. The subgroup $\mu_{\infty} = \mu_{\infty} \times \{1\}$ is preserved by the Galois action and the Galois action is equivariant with respect to the projection to $G$. This implies that for all $g,\sigma$ there exists $a_{\sigma, g} \in \mu_{\infty}$ such that $\sigma(\zeta, \sigma^{-1}(g)) = (\sigma(\zeta) a_{\sigma, g}, g)$ for all $\zeta \in \mu_{\infty}$. One can now check the following via straightforward cocycle computations.
	\begin{enumerate}
		\item For all $\sigma \in \Gamma_k$ and $g, h \in G$ we have $a_{\sigma, gh} = a_{\sigma, g} a_{\sigma, h}$. This implies that the map $g \to a_{\sigma, g}$ is an element of $\Hom(G, \mu_{\infty})$ for all $\sigma \in \Gamma_k$. It in particular factors through $\Hom(G, \mu_n) = \Hom(G, \mu_{\infty})$.
		\item For all $\sigma, \tau \in \Gamma_k$ and $g \in G$ we have $a_{\sigma \tau, g} = a_{\sigma, g} a_{\tau, \sigma^{-1}(g)}$. 
		\end{enumerate}
		These claims imply that 
		\[
		\alpha: \Gamma_k \to \Hom(G, \mu_n): \sigma \to (g \to a_{\sigma, g}).
		\] 
		is a cocycle. It then follows immediately from the definition that $G_{\beta} = E_{\alpha}$.
\end{proof}
One can compute the evaluation map coming from $E_{\alpha}$ directly in terms of cup products. Consider the tautological pairing $\Hom(G, \mu_n) \times G^{\mathrm{ab}} \to \mu_n$. We get an induced cup product pairing
\[
\HH^1(k, \Hom(G, \mu_n)) \times \HH^1(k, G^{\mathrm{ab}}) \to \HH^2(k, \mu_n) = \Br k[n] : (\alpha, \chi) \to \alpha \cup \chi.
\]
If we take $\exp(G) \mid n$ then we have $\HH^1(k, \Hom(G, \mu_n)) \cong \HH^1(k , \hat{G})$ by Lemma \ref{lem:algebraic_Brauer_H1_Pic}. We then have the following.

\begin{lemma} \label{lem:cup_products}
	The diagram 
\[
\xymatrix{ \Br^{\mathrm{e}}_1 BG \times BG(k)  \ar[r] \ar[d] & \Br k \ar[d] \\ 
\H^1(k, \dual{G}) \times \H^1(k,G^{\mathrm{ab}}) \ar[r] & \H^2(k, \mu_n ) }
\]
	commutes, where the top row is evaluation of a Brauer group element and the
	bottom row is induced by the cup product.
\end{lemma}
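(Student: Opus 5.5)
The plan is a direct cocycle computation. We compute the connecting map $\delta$ for the central extension $E_\alpha$ by hand and compare it with the standard cochain formula for the cup product. Fix $\alpha \in Z^1(k,\Hom(G,\mu_n))$ and $\varphi \in Z^1(k,G)$. Since $E_\alpha = \mu_n \times G$ as an abstract group, $\varphi$ has the canonical set-theoretic lift
$$\tilde\varphi(\sigma) := (1,\varphi(\sigma)) \in E_\alpha.$$
By \cite[\S I.5.7]{Ser02}, $\delta(\varphi)$ is represented by the $2$-cochain
$$c(\sigma,\tau) = \tilde\varphi(\sigma)\,\bigl(\sigma\cdot_\alpha \tilde\varphi(\tau)\bigr)\,\tilde\varphi(\sigma\tau)^{-1} \in \mu_n.$$

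The first step is to evaluate $c$ using the twisted action of Definition \ref{def:algebraic_central_extension}. We have
$$\sigma\cdot_\alpha(1,\varphi(\tau)) = \bigl(\alpha(\sigma)(\sigma(\varphi(\tau))),\, \sigma(\varphi(\tau))\bigr).$$
Multiplying in the direct product and using the cocycle law $\varphi(\sigma\tau)=\varphi(\sigma)\sigma(\varphi(\tau))$, the $G$-component cancels, and we obtain
$$c(\sigma,\tau) = \alpha(\sigma)\bigl(\sigma(\varphi(\tau))\bigr).$$
Up to the convention for the pairing, this is exactly the standard cochain representative of the cup product $\alpha \cup \bar\varphi$. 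Here $\bar\varphi$ is the image of $\varphi$ in $Z^1(k,G^{\mathrm{ab}})$, and the pairing $\Hom(G,\mu_n)\times G^{\mathrm{ab}} \to \mu_n$ is evaluation: $(a\cup b)(\sigma,\tau) = a(\sigma)\otimes \sigma b(\tau)$. Because $\alpha(\sigma)$ is a homomorphism to an abelian group, it factors through $G^{\mathrm{ab}}$, so the formula depends only on $\bar\varphi$.

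The second step is to check that the vertical identifications are compatible. On the left, Lemma \ref{lem:algebraic_Brauer_H1_Pic} identifies $\Br^{\mathrm{e}}_1 BG$ with $\H^1(k,\dual G)$ via $\alpha\mapsto E_\alpha$. We must confirm that the evaluation of Definition \ref{def:evaluate} does not depend on the chosen representative $E_\alpha$ with kernel $\mu_n$, as opposed to a larger $\mu_m$. This follows because pushing out along $\mu_n\to\mu_m$ is compatible both with connecting maps and with the inclusion $\Br k[n]\subset \Br k[m]$. On the right, $\H^2(k,\mu_n)=\Br k[n]\subset \Br k$.

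The main obstacle is the sign and order conventions rather than any conceptual difficulty. Serre's connecting-map formula may produce the cochain $\alpha(\sigma)(\sigma\varphi(\tau))$ or its inverse, and the cup product may be written as $\alpha\cup\chi$ or $\chi\cup\alpha$, which differ by a sign in degree $1\times 1$. I would fix Serre's conventions for both and check that the cochains agree on the nose. If they differ by a sign, I would absorb it by noting that the identification in Lemma \ref{lem:algebraic_Brauer_H1_Pic} can be composed with $-1$ without affecting the statement, or by choosing the appropriate ordering in the cup product. Finally, because everything is defined at the level of cocycles and $c$ changes by a coboundary under conjugation of $\varphi$ and under changing $\alpha$ by a coboundary, the diagram commutes on classes.
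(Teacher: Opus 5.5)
Your argument is correct, and it takes a genuinely different and much more elementary route than the one the paper points to. The paper gives no proof here. It defers to the ``rather technical'' \cite[Lem.~6.4]{LS24}, where the Brauer group is $\H^2(BG,\Gm)$ rather than a group of explicit central extensions, so comparing evaluation with cup products has to be done stack-theoretically.

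You instead use the fact that, in this survey, the left vertical arrow is \emph{defined} as the inverse of $\alpha\mapsto E_\alpha$ from Lemma~\ref{lem:algebraic_Brauer_H1_Pic}. With the tautological lift $\sigma\mapsto(1,\varphi(\sigma))$, Serre's formula $b_\sigma\,{}^{\sigma}b_\tau\,b_{\sigma\tau}^{-1}$ gives $\alpha(\sigma)(\sigma(\varphi(\tau)))$. This is literally the standard cochain $(\alpha\cup\bar\varphi)(\sigma,\tau)=\alpha(\sigma)(\sigma\bar\varphi(\tau))$.

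Your route buys a self-contained two-line proof within the survey's framework. What it does not give, and what the route via \cite{LS24} supplies, is compatibility with the intrinsic identification of algebraic classes in $\H^2(BG,\Gm)$ with $\H^1(k,\dual{G})$. That compatibility is what you need to transport the statement back to the stacky Brauer group or to compare with the usual $\H^1(k,\Pic)$ formulations.

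Two small comments. First, your hedging about signs is unnecessary. With Serre's conventions for both the connecting map and the cup product, the cochains agree on the nose, with $\alpha$ in the first slot as the paper specifies. This matters, because your proposed fallback would change the statement rather than prove it. Both composing the identification of Lemma~\ref{lem:algebraic_Brauer_H1_Pic} with $-1$ and reversing the order in the cup product alter the arrows in the diagram. Second, the final coboundary check is superfluous. Both rows are already well defined on classes, so it suffices to verify the identity for one choice of cocycle representatives, which is exactly what your computation does.
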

\begin{proof}
	The proof is rather technical, we refer to \cite[Lem.~6.4]{LS24} for the details.
\end{proof}

We have introduced Brauer group elements via central extensions as it allows a uniform treatment of algebraic and transcendental Brauer group elements. However for algebraic Brauer group elements it is often easier to think about them via cup products, as in Lemma \ref{lem:cup_products}, since then the local invariants are easier to calculate. This is illustrated in the following example.

\begin{example}[Grunwald--Wang] \label{ex:Grunwald-Wang}
We will now explain the famous counterexample of Grunwald-Wang that there exists no $C_8$-extension of $\Q$ which is totally inert and unramified at $2$, via the Brauer-Manin obstruction.

Let $G = C_8$ and $\beta = 16 \in \Q^{\times}/\Q^{\times 8} \cong \HH^1(\Q, \mu_{8}) = \Br^{\mathrm{e}}_1 BC_8$, where the first isomorphism is by Kummer theory. We will show that this Brauer element induces an obstruction to weak approximation.

Let $(\varphi_v)_v \in \prod_v BC_8[\Q_v]$ be such that $\varphi_2: \Gamma_2 \to C_8$ is a surjective unramified character. We will evaluate the local invariants $\text{inv}_v(\beta(\varphi_v))$. Note first that $\beta(\varphi_v) = 16 \cup \varphi_v$ by Lemma \ref{lem:cup_products}. 

We have $16 = \sqrt{2}^8= \sqrt{-2}^8= (1 + \sqrt{-1})^8$ so $16 = 1 \in \HH^1(\Q_v, \mu_8)$ for all $v \neq 2$ since for all odd or real places at least one of $\sqrt{2}, \sqrt{-2}, \sqrt{-1}$ is contained in $\Q_v$. For $v = 2$ we compute by using the relation between cup products and Hilbert symbols (see \cite[\S XIV.1]{Ser79}) that
\[
	16 \cup \varphi_v = 2^4 \cup \varphi_v = (2, 5)_2.
\]
The quaternion algebra $(2, 5)_2$ is non-trivial so $\text{inv}_2(\beta(\varphi_2)) = \frac{1}{2}$. To summarize, we have computed that 
\[
\sum_{v} \text{inv}_v(\beta(\varphi_v)) = \text{inv}_2(\beta(\varphi_2)) = \frac{1}{2} \neq 0
\]
so there is a Brauer-Manin obstruction to a global cocycle $\varphi$ realizing such a collection of local cocycles $(\varphi_v)_v$. Translating this into the language of field extensions we exactly get the Grunwald-Wang counterexample.
\end{example}

\begin{remark}[The residue]
	Via Definition \ref{def:residue}, for each $c \in \mathcal{C}_G^*$ we obtain a residue
	map
	$$\partial_{c}: \Br^{\mathrm{e}}_1 BG \to \H^1(k(c),\Q/\Z)$$
	which is a homomorphism. Various alternative definitions of the residue, as well as ways of calculating it, can be found in \cite[\S6.6]{LS24}.
\end{remark}

\subsection{Transcendental elements} \label{sec:transcendental_Brauer_group}

\begin{definition}
An element of $\Bre BG$ not in $\Br^{\mathrm{e}}_1 BG$, i.e.~which is not algebraic, is called \emph{transcendental}. 
We call $\Bre BG_{k^{\sep}}$ the \emph{geometric Brauer group}.
\end{definition}

We use analogous notation to Definition \ref{def:marked_central_extension} for markings.
Transcendental Brauer group elements are fairly mysterious in general. By definition there is an exact sequence
\begin{equation} \label{seq:Br_extension}
	0 \to \Br^{\mathrm{e}}_{\mathcal{C},1} BG \to \Br^{\mathrm{e}}_{\mathcal{C}} BG \to \Br^{\mathrm{e}}_{\mathcal{C}} BG_{k^{\sep}}.
\end{equation}
However there is no reason for the right hand map to be surjective; even if the codomain is restricted to Galois invariant elements. Moreover the sequence need not split.

But at least the geometric Brauer group is a well-studied object, namely by Definition \ref{def:central} elements of $\Bre BG_{k^{\sep}}$ are central extensions of abstract groups
$$ 0 \to \Q/\Z \to E \to G \to 0.$$
Such central extensions are classified by a well-studied object in group theory, namely the \textit{Schur multiplier} $\H^2(G, \Q/\Z) = \H^2(G, \C^\times)$ of $G$, hence $\Bre BG_{k^{\sep}} \cong \H^2(G, \Q/\Z)$  (see \cite[Thm.~IV.3.12]{Bro94}).

Therefore calculating the Schur multiplier is a first step to understanding the transcendental Brauer group. However in Malle's conjecture one is typically interested in marked central extensions, and the marked geometric Brauer group can be trivial even if the Schur multiplier is not.

\begin{example} \hfill

	(1) Let $G$ be abelian. Then the
	geometric Brauer group is $\H^2(G,\Q/\Z)$, which is typically non-trivial.
	However, in Conjecture \ref{conj:balanced} we are interested in marked
	central extensions where the marking generates $G$. It turns out that any
	such marked central extension is trivial \cite[Lem.~6.38]{LS24}. Thus the partially
	unramified transcendental Brauer group in Conjecture \ref{conj:balanced} is trivial for $G$ abelian.

	(2) The following is the smallest example of a non-trivial transcendental Brauer group element
	with a balanced marking. Take $G=A_4$. We have $\PSL_2(\F_3) \cong A_4$, thus we have a 
	central extension
	$$0 \to \mu_2 \to \SL_2(\F_3) \to A_4 \to 0$$
	which hence determines an element of order dividing $2$ in $\Bre BA_4$.
	We take $C \subset A_4$ to be the elements of order $3$; note that these generate $A_4$.
	Then a $C$-marking is obtained by taking the elements of order $3$ in $\SL_2(\F_3)$, of which 
	there are $8$ and they form a single conjugacy class (this subset is closed under the Galois action as it is closed under powering). This yields a non-trivial
	transcendental Brauer group element of $BA_4$ of order $2$ with a balanced marking; 
	see \cite[\S 10.6]{LS24}
	for more details as well as arithmetic implications.
	
	This example does not admit a complete marking: assume for a contradiction that there is a marking
	over the conjugacy class of elements of order $2$ in $A_4$. But there is a unique non-central conjugacy class of
	$\SL_2(\F_3)$ consisting of elements of $2$-power order, namely the elements of order $4$. However
	this conjugacy class has size $6$, so it cannot biject with the elements of order $2$ in $A_4$,
	of which there are $3$. 
	
	(3) The above examples hint that it may be difficult to come up with examples of finite groups
	$G$ with a non-trivial unramified transcendental Brauer group. This is indeed the case;
	the first such example is due to Saltman \cite{Sal82} for a group of order $p^9$.
	Such examples were put into a more
	general framework by Bogomolov \cite{Bog87}, in terms of the so-called
	\emph{Bogomolov multiplier} (see \cite[Rem.~6.35]{LS24} for more details).
\end{example}




\subsection{Computing the Brauer group}
Let $C \subset G(-1)$ be Galois and conjugacy invariant and generate $G$, with corresponding $\mathcal{C} \subset \mathcal{C}_G$. We now explain a process to calculate the Brauer group $\Br^{\mathrm{e}}_{\mathcal{C}} BG$ as outlined in \cite[\S6.7]{LS24}; a closely related procedure is also described in \cite{Lag25} in the case of a complete marking. Calculating this Brauer group is necessary to calculate the leading constant.

To begin we recall from Lemma \ref{lem:exp} that any element of $\Br^{\mathrm{e}}_{\mathcal{C}} BG$ may be represented by a central extension, with $m := \exp(G) \cdot \exp(\HH^2(G, \Q/\Z))$ 
\begin{equation} \label{eqn:central_extension_sequence}
1 \to \mu_{m} \to E \to G \to 1.
\end{equation}
First enumerate all such central extensions as finite groups, i.e.~without taking into account the $\Gamma_k$-action. There are finitely many such extensions as the group cohomology $\H^2(G,\mu_m)$ is finite for any $m$. They can be explicitly found using, for example, databases of small groups.

Next we need to take into account the existence of a marking, in the sense of Definition \ref{def:marked_central_extension}. To do this we view $C$ as a subset of $G$ by using an isomorphism as in Lemma \ref{lem:Galois_action_on_G(-1)}; the choice of isomorphism does not affect the existence of a marking. One then enumerates all conjugacy invariant subsets $A \subset E$ such that $A \to C$ is a bijection; this yields a geometric marking in the sense of Definition \ref{def:marked_central_extension}.

We next need to determine whether there is a Galois action on $E$ which both
\begin{enumerate}
	\item Makes \eqref{eqn:central_extension_sequence} an exact sequence of $\Gamma_k$-groups
	\item Leaves $A$ Galois invariant with respect to the anticyclotomic Galois action from Definition \ref{def:G(cycl)}; for example if $k = \Q$, it needs to be invariant under invertible powering.
\end{enumerate}
Note that if a Galois action on $E$ satisfying (2) exists, then it is necessarily unique; indeed $C$ generates $G$ by assumption, so it follows that $A$ and $\mu_{m}$ together generate $E$. So the Galois action on $A$ uniquely determines the Galois action on $E$. 
To find such a Galois action, one considers all possible subgroups of $\Aut E$ and determines whether it satisfies the required properties (1) and (2) above, then performs a Galois twist by this Galois action. 

This computes marked central extensions. We can keep track of the change of choice of marking using Lemma \ref{lem:Q/Z-torsor}. Finally, since we have central extensions with finite kernel, rather than $\mu_\infty$, it is possible that there is redundancy amongst the elements obtained. One can find a basis for the Brauer group using the criterion from Lemma \ref{lem:trivial_in_Brauer_group}.

We note that the uniqueness of the Galois action after fixing the marking implies the following finiteness result, since the collection of all central extensions \eqref{eqn:central_extension_sequence} is finite.

\begin{corollary} \label{cor:Br_finite}
	Let $\mathcal{C} \subset \mathcal{C}_G$ be Galois invariant and generate $G$. Then $\Br^{\mathrm{e}}_{\mathcal{C}} BG$ is finite.
\end{corollary}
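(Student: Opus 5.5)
The idea is to exhibit a finite set that surjects onto $\Br^{\mathrm{e}}_{\mathcal{C}} BG$. Set $m := \exp(G)\cdot\exp(\H^2(G,\Q/\Z))$. By Lemma \ref{lem:exp}, every $\beta \in \Br^{\mathrm{e}}_{\mathcal{C}} BG$ is represented by a central extension of $\Gamma_k$-groups
$$1 \to \mu_m \to E \to G \to 1.$$
By Definition \ref{def:marked_central_extension}, $E$ also carries a $\mathcal{C}$-marking $A \subset E(-1)$.

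One caveat: the marking need not survive when we pass from the $\mu_\infty$-extension to its finite model. To avoid this, I would apply Lemma \ref{lem:exp} to the marked extension itself. Alternatively, I would take $A$ to be the intersection of the $\mu_\infty$-marking with $E(-1)$. Since $\mu_m$ contains all roots of unity of order dividing $\exp(G)$, elements of $G(-1)$ lift inside $E$ exactly when they do in the pushout. So I would simply accept finite representatives together with their markings.

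The key step is to bound the data $(E,A,\text{Galois action})$.

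\emph{Abstract group.} Forgetting the Galois action, $E$ is a central extension of $G$ by $\Z/m\Z$. Such extensions are classified by $\H^2(G,\Z/m\Z)$, which is finite, so up to isomorphism there are finitely many choices of $E$.

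\emph{Marking.} Fixing an isomorphism $E(-1)\cong E$ as in Lemma \ref{lem:Galois_action_on_G(-1)}, the geometric marking $A$ is a subset of the finite set $E$. So there are finitely many choices of $A$.

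\emph{Galois action.} This is where the generation hypothesis enters. Since $\mathcal{C}$ generates $G$ and $A \to C$ is a bijection, the subgroup generated by the images of $A$ together with $\mu_m$ is all of $E$. The Galois action on $\mu_m$ is fixed. The action on $A$ is determined by the action on $C$ through the bijection $A\to C$, twisted anticyclotomically. Hence the $\Gamma_k$-action on $E$ is uniquely determined by $(E,A)$.

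Consequently, $\beta$ is determined by the abstract pair $(E,A)$ up to isomorphism. So the map from the finite set of such pairs onto $\Br^{\mathrm{e}}_{\mathcal{C}} BG$ is surjective, which gives finiteness.

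I expect the main obstacle to be making the phrase ``determined by'' rigorous up to equivalence of extensions. Two marked extensions with isomorphic underlying abstract data must be equivalent as extensions of $\Gamma_k$-groups, and this requires the isomorphism to fix $\mu_m$ pointwise and to cover the identity on $G$. I would handle this by fixing the abstract extension class in $\H^2(G,\Z/m\Z)$ rather than just the isomorphism type of $E$. Then the remaining freedom is to replace $A$ by another subset of the same finite set, which is harmless.
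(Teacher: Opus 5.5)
Your proposal is correct and is essentially the paper's argument: each class is represented by a central extension with kernel $\mu_m$ (Lemma~\ref{lem:exp}), there are finitely many such abstract extensions and finitely many geometric markings, and since $A$ and $\mu_m$ generate $E$ the Galois action is uniquely determined; your step of fixing the class in $\H^2(G,\Z/m\Z)$, so that you get equivalence of extensions rather than mere isomorphism of groups, is a welcome extra precision. One small correction to your caveat: the intersection of a marking on the $\mu_\infty$-pushout with $E(-1)$ can be empty, so you should first translate each Galois orbit of that marking by a suitable element of $\mu_\infty(-1)\cong\Q/\Z$ (which acts centrally and is Galois-fixed, so this preserves being a marking), after which that orbit lies in $E(-1)$.
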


This result is also proven in \cite[Cor.~6.30]{LS24} using cohomological techniques. The method we describe does not distinguish between algebraic and transcendental Brauer group elements. Taking these into account and utilising the sequence \eqref{seq:Br_extension} can naturally help to speed up the process, for example by calculating residues and using Lemma \ref{lem:residue}.

\section{The leading constant} \label{sec:leading_constant}

We now have all the ingredients we need to state our conjectures on the leading constant in Malle's conjecture. We state our conjectures in terms of elements of $BG[k]$, equivalently in terms of cocycles with values in $G$. One can also state a version in terms of possibly non-Galois number fields but care is required keeping track of groupoid cardinalities; see \cite[Conj.~9.3]{LS24}.

\subsection{Balanced heights}

Our main conjecture concerns balanced heights as in Definition \ref{def:balanced}. We assume that $k$ is a number field.

\begin{conjecture} \label{conj:balanced}
	Let $G$ be a finite $\Gamma_k$-group and $H$ a big balanced height function on $BG$. Then there exists a thin subset $\Omega \subset BG[k]$ such that
	$$\frac{1}{|Z(G)^{\Gamma_k}|}\#\{ \varphi \in BG[k] \setminus \Omega : H(\varphi) \leq B\} \sim c(k,G,H) B^{a(H)} (\log B)^{b(k,H)-1}$$
	where 
	\begin{align*}
	a(H) & = (\min_{c \in \mathcal{C}_G^*}w(c))^{-1},   \quad
	\mathcal{M}(H) = \{ c \in \mathcal{C}_G^* : w(c) = a(H)^{-1}\}, \\
	 b(k,H) & = \#\mathcal{M}(H)/\Gamma_k,
	\end{align*}
	and 
	$$c(k,G,H) = \frac{a(H)^{b(k,H)-1}\cdot |\Br^{\mathrm{e}}_{\mathcal{M}(H)} BG| \cdot
	\tau_H( BG(\Adele_k)_{\mathcal{M}(H)}^{\Br})}{\#\dual{G}^{\Gamma_k} (b(k,H) - 1)!}.$$
\end{conjecture}

Recalling Definition \ref{def:BG}, the left-hand side counts elements of $\H^1(k,G)$ of bounded height, away from a suitable thin set. If $G$ has trivial Galois action, then this is the same as continuous homomorphisms $\Gamma_k \to G$, up to conjugation. We sometimes use the notation $a_G(H), b_G(k,H)$ for the exponents to make the group $G$ clear. 
The prediction for the leading constant is modelled on \eqref{eqn:adjoint_rigid}. Briefly the analogue of the effective cone constant in this setting is the rational number $a(H)^{b(k,H)}/\#\dual{G}^{\Gamma_k}$; see \cite[Lem.~3.35]{LS24}. The Brauer group and Tamagawa factors are analogous; though since we do not have any notion of non-trivial open subset of $BG$, we instead use the Galois orbits of $\mathcal{C}_G$ to play the role of divisors in some hypothetical compactification. It is also shown in \cite[Lem.~3.26]{LS24} that the exponents which appear are as expected from Conjecture \ref{conj:Manin}.

Here $Z(G)$ denotes the \emph{centre} of $G$. This is included because we should count using the groupoid cardinality, and away from a thin set the automorphism group of an element $\varphi$ is exactly the centre (see \cite[Lem.~2.9]{LS24}).

As in Manin's conjecture, we suggest to remove a thin set to get the correct asymptotic formula. Firstly it should be clear that one should only consider surjective cocycles. However it is also necessary to remove more cocycles in general, as the following counter-example of Kl\"{u}ners \cite{Klu05} demonstrates. 
	
\begin{example} \label{ex:Kluners}
	Take $G = C_3 \wr C_2$ as in Example \ref{ex:wreath} with the discriminant height $\Delta$ as in
	Example \ref{ex:discriminant}. We have $a_G(\Delta) = b_G(\Q,\Delta) = 1$. 
	However the problematic thin set is given by the fields containing $\Q(\sqrt{-3})$, 
	which are parametrised by $B \mathrm{R}_{\Q(\sqrt{-3})/\Q} C_3$. We have
	$a_{\mathrm{R}_{\Q(\sqrt{-3})/\Q} C_3}(\Delta) = 1$ but $b_{\mathrm{R}_{\Q(\sqrt{-3})/\Q} C_3}(\Q,\Delta) = 2$,
	so these contribute a larger than expected power of $\log B$.
\end{example}

\begin{question}
	It is known that Manin's conjecture is compatible with respect to the Weil restriction \cite{Lou15}. Is the same true for Conjecture \ref{conj:balanced}? The Weil restriction naturally arises in Malle's conjecture when considering wreath products (see Example \ref{ex:wreath}).
\end{question}

\begin{remark}[The choice of thin set $\Omega$] \label{rem:thin_set}
	It is useful for the theory and  examples to allow flexibility in the choice of thin set $\Omega$. This may seem unsatisfactory, however actually once one has found a thin set which works, so does any larger thin set. This is the content of \cite[Thm.~9.13]{LS24}. Despite all this, we \emph{do} also have a conjecture about a choice of thin set which works. These should be the breaking cocycles as defined in \cite[\S 3.7]{LS24}.
	
	When $G$ has trivial Galois action, these are contained in an explicit set (see \cite[Thm.~3.40]{LS24}). Namely, we conjecture that Conjecture \ref{conj:balanced} holds with
\begin{equation} \label{eqn:thin_set}
\Omega =  \left\{ \varphi: \Gamma_k \to G :
	\begin{array}{ll}
	 	\varphi \text{ is not surjective or } \\
	   k_{\varphi} \text{ is not linearly disjoint to } k(\mu_{\exp(G)})
	   \end{array} \right\}.
\end{equation}
\end{remark}

\subsubsection{Volume of the Brauer--Manin set}
If $\mathcal{M}(H) \neq \mathcal{C}_G^*$, then by Theorem \ref{thm:trivial_on_O_v_R} the Brauer-Manin obstruction can  play a role at infinitely many places. Computing the volume of the Brauer-Manin set is then not straightforward from the definition. It is therefore useful to take a Fourier-theoretic perspective.

Firstly, note that the formula $\alpha \to e^{2 i \pi  \alpha}$ defines a well-defined map $\Q/\Z \to \C$. The following is thus well-defined. 

\begin{definition} \label{def:Brauer_transform}
	Let $H$ be a height, $\beta \in \Br^{\mathrm{e}}_{\mathcal{M}(H)} BG$ and $U \subset BG[\Adele_k]_{\mathcal{M}(H)}$ a Borel subset. The \emph{global Brauer transform} is defined as the integral
	\begin{equation*}
		\hat{\tau}_H(\beta ; U) = \int_U e^{2 i \pi \langle \beta, x \rangle_{\mathrm{BM}}} d\tau_H(x).
	\end{equation*}
\end{definition}
These global Brauer transforms are Euler products of local Brauer transforms.
\begin{lemma}
	Assume that $U = \prod_v U_v$ with $U_v \subset BG[k_v]$. We then have
	\[
	\hat{\tau}_H(\beta ; U) = \L^*(\mathcal{M}(H),1)\prod_v \lambda_v^{-1} \hat{\tau}_{H, v}(\beta, U_v).
	\]
	Where $\hat{\tau}_{H, v}(\beta, U_v)$ is the \emph{local Brauer transform}, defined as
	\[
	\int_{U_v} e^{2 i \pi \inv_v( \beta( \varphi_v))	} d\tau_{H, v}(x_v) = \sum_{\varphi_v \in U_v} \frac{e^{2 i \pi \inv_v( \beta( \varphi_v))}}{|\Aut(\varphi_v)| H_v(\varphi_v)^{a(H)}}.
	\]
\end{lemma}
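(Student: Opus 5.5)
The identity says that the global Brauer transform of a product set is an Euler product of local transforms. The plan is to treat it as a statement about product measures: integrate a function that factorises over places against the product measure $\tau_H$. The first step is to rewrite the character in the integrand as a product. For $x=(\varphi_v)_v \in U$, Definition \ref{def:BM_pairing} gives $\langle \beta, x\rangle_{\mathrm{BM}} = \sum_v \inv_v \beta(\varphi_v)$. By Lemma \ref{lem:Brauer-Manin_well_defined} (equivalently Theorem \ref{thm:trivial_on_O_v_R}, since $\beta \in \Br^{\mathrm{e}}_{\mathcal{M}(H)} BG$), this sum has only finitely many nonzero terms. Hence
\[
e^{2 i \pi \langle \beta, x \rangle_{\mathrm{BM}}} = \prod_v e^{2 i \pi \inv_v \beta(\varphi_v)},
\]
where all but finitely many factors equal $1$. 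Moreover, there is a finite set of places $S_0$, containing the bad places, such that $\inv_v\beta$ vanishes identically on $BG(\O_v)_{\mathcal{M}(H)}$ for every $v\notin S_0$.

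Next I would compute over finite truncations. Write $U$ as the increasing union over finite $S \supseteq S_0$ of the sets
\[
U_S = \prod_{v \in S} U_v \times \prod_{v \notin S} \bigl(U_v \cap BG(\O_v)_{\mathcal{M}(H)}\bigr).
\]
For $v \notin S$ the character is trivial on these factors. Since $\tau_H$ is by Definition \ref{def:Tamagawa} the normalised product measure $\L^*(\mathcal{M}(H),1)\prod_v \lambda_v^{-1}\tau_{H,v}$, Fubini on this product gives
\[
\hat\tau_H(\beta;U_S) = \L^*(\mathcal{M}(H),1) \prod_{v\in S} \lambda_v^{-1}\hat\tau_{H,v}(\beta,U_v)\prod_{v\notin S}\lambda_v^{-1}\tau_{H,v}\bigl(U_v\cap BG(\O_v)_{\mathcal{M}(H)}\bigr).
\]
Each local factor $\hat\tau_{H,v}(\beta,U_v)$ is a finite sum over $BG[k_v]$, so the finite-place part is immediate. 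The infinite tail converges absolutely by the mass formula (Theorem \ref{thm:mass_formula}) together with \eqref{eqn:measure_minimal_weight}, as in the convergence of $\tau_H$ in \cite[Thm.~8.17]{LS24}.

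Finally, I would let $S$ grow. The left side tends to $\hat\tau_H(\beta;U)$ by dominated convergence, since the integrand has modulus $1$ and $\tau_H$ is finite on each $U_S$. The right side should tend to the full Euler product. The main obstacle is this passage to the limit and the interpretation of the Euler product: for $v \notin S_0$ the local factor $\lambda_v^{-1}\hat\tau_{H,v}(\beta,U_v)$ includes the part of $U_v$ outside $BG(\O_v)_{\mathcal{M}(H)}$, where the character need not be trivial. I would handle this by noting that, on the partial adelic space, $U \subset BG(\Adele_k)_{\mathcal{M}(H)}$ already forces $U_v \subseteq BG(\O_v)_{\mathcal{M}(H)}$ for almost all $v$. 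For such $v$ the local transform equals $\lambda_v^{-1}\tau_{H,v}(U_v)=1+O(q_v^{-1-\delta})$, or at worst $1+O(q_v^{-2})$ after comparing with $\lambda_v = \L_v(\mathcal{M}(H),1)$. The product is then absolutely convergent and the two limits agree.
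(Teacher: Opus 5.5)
Your route is the paper's route with the details written out. The paper's proof is one line: $\tau_H$ is by definition the normalised product measure $\L^*(\mathcal{M}(H),1)\prod_v\lambda_v^{-1}\tau_{H,v}$, and $\langle\beta,\cdot\rangle_{\mathrm{BM}}$ is a sum of local invariants. So the character factorises over places, and the integral of a product function over a product set is the product of the local integrals.

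One step of your write-up is false as stated, although it is not needed. For $v$ with $U_v\subseteq BG(\O_v)_{\mathcal{M}(H)}$ it is not true in general that $\lambda_v^{-1}\tau_{H,v}(U_v)=1+O(q_v^{-1-\delta})$. By \eqref{eqn:measure_minimal_weight} this holds when $U_v=BG(\O_v)_{\mathcal{M}(H)}$, but not for smaller $U_v$. For instance, take $U_v=BG(\O_v)$ for all $v\notin S_0$. Then the factor is $\lambda_v^{-1}=1-\#\mathcal{M}(H)^{\Gamma_{k_v}}/q_v+O(q_v^{-2})$, and by Chebotarev the product diverges to $0$. So your claims that ``the infinite tail converges absolutely'' and that ``the product is then absolutely convergent'' fail in general.

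The repair is that no limit in $S$ is needed at all. Take $S\supseteq S_0$ that also contains the finitely many $v$ with $U_v\not\subseteq BG(\O_v)_{\mathcal{M}(H)}$. Then your truncation $U_S$ is literally equal to $U$, and your Fubini identity already is the claimed formula. Indeed, for $v\notin S$ you have $\hat\tau_{H,v}(\beta,U_v)=\tau_{H,v}(U_v)$ factor by factor. The tail $\prod_{v\notin S}\lambda_v^{-1}\tau_{H,v}(U_v)$ then means on both sides exactly what it means in the construction of $\tau_H$: a limit of partial products, possibly $0$. These partial products converge because each factor is at most $\lambda_v^{-1}\tau_{H,v}(BG(\O_v)_{\mathcal{M}(H)})=1+O(q_v^{-2})$.
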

\begin{proof}
	Immediate from the definition $\tau_H := \L^*(\mathcal{M}(H),1) \prod_v \lambda_v^{-1} \tau_{H, v}$ and the fact that the global Brauer-Manin pairing is the sum of the local Brauer-Manin pairings.
\end{proof}

\begin{lemma}\label{lem:finite_sum_Euler_products}
	Let $B \subset \Br^{\mathrm{e}}_{\mathcal{M}(H)} BG$ be a finite subgroup and $U \subset BG[\Adele_k]_{\mathcal{M}(H)}$ a Borel subset. We then have
	\[
	\tau_H(U^B) =  \frac{1}{|B|}\sum_{\beta \in B} \hat{\tau}_H(\beta ; U). 
	\]
\end{lemma}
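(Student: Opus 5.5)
The plan is a character-orthogonality argument on the finite abelian group $B$. By Lemma \ref{lem:Brauer-Manin_well_defined}, for each $x \in BG(\Adele_k)_{\mathcal{M}(H)}$ the pairing $\beta \mapsto \langle \beta, x\rangle_{\mathrm{BM}}$ is a well-defined homomorphism $\Br^{\mathrm{e}}_{\mathcal{M}(H)} BG \to \Q/\Z$. Additivity in $\beta$ follows because evaluation $\beta \mapsto \beta(\varphi_v)$ is additive for the Baer sum; this is the standard compatibility of connecting maps with Baer sums, and I would take it as known. Restricting to $B$, the map $\chi_x : \beta \mapsto e^{2i\pi \langle \beta, x\rangle_{\mathrm{BM}}}$ is therefore a character of $B$. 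It is trivial exactly when $\langle \beta, x\rangle_{\mathrm{BM}} = 0$ for all $\beta \in B$, that is, exactly when $x \in U^B$ (for $x \in U$).

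Orthogonality of characters on the finite abelian group $B$ then gives the pointwise identity
$$\frac{1}{|B|}\sum_{\beta \in B} e^{2i\pi \langle \beta, x\rangle_{\mathrm{BM}}} = \mathbbm{1}_{U^B}(x) \quad \text{for } x \in U.$$
I would integrate this identity over $U$ against $\tau_H$. The left side becomes $\tau_H(U^B)$. On the right side, the sum is finite and each integrand is bounded by $1$ in absolute value, so I can interchange sum and integral as long as each integral converges. The result is $\frac{1}{|B|}\sum_\beta \hat{\tau}_H(\beta;U)$.

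The only point needing care is measurability and finiteness. For each $\beta$, the function $x \mapsto \langle \beta, x\rangle_{\mathrm{BM}}$ is continuous with discrete values, by the continuity statement in Lemma \ref{lem:Brauer-Manin_well_defined}. Hence each integrand is Borel measurable and $U^B$ is a Borel set. Finiteness of $\tau_H$ on $BG(\Adele_k)_{\mathcal{M}(H)}$ comes from the absolute convergence of the product defining $\tau_H$, quoted after Definition \ref{def:Tamagawa}. I expect this bookkeeping, rather than the orthogonality itself, to be the only real obstacle; it is mild.
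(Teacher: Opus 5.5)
Your proposal is correct and is the paper's argument: the paper's proof is the single line ``immediate from character orthogonality,'' and you have only added the routine checks (additivity of $\beta \mapsto \langle \beta, x\rangle_{\mathrm{BM}}$, local constancy of the integrands, and finiteness of $\tau_H$). One harmless slip: when you integrate your displayed identity over $U$, the indicator side gives $\tau_H(U^B)$ and the character-average side gives $\frac{1}{|B|}\sum_{\beta \in B} \hat{\tau}_H(\beta;U)$, not the other way round.
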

\begin{proof}
	Immediate from character orthogonality.
\end{proof}

So we find that the Tamagawa factor in Conjecture \ref{conj:balanced} is a finite sum of Euler products. 

\begin{remark}[Fair vs balanced]
In Wood \cite[\S2.1]{Woo10} introduces the notion of fair height function for abelian groups. She does this via an algebraic condition, namely she calls the height function \emph{fair} on an abelian group $G$ if the minimal weight elements generate $G[r]$ for all $r$. She proves an asymptotic formula for such heights in \cite[Thm.~3.1]{Woo10}, with the leading constant given by a finite sum of Euler products over $k$. The finite sum of Euler products arises in her setting via the Grunwald--Wang theorem which, as explained in Example~\ref{ex:Grunwald-Wang}, can be interpreted as a Brauer--Manin obstruction, thus this is compatible with the expression in Lemma \ref{lem:finite_sum_Euler_products}. 

However there are other heights with nice leading constants. For example, Tavernier has given in  \cite[Ex.~1.7]{Tav25} an example of a balanced height on $\Z/6\Z$ whose leading constant is given by a single Euler product, but is not fair in the sense of Wood as it does not satisfy her algebraic condition. 


Balanced heights look like the correct class which encapsulate the properties one would like for the leading constant, but whilst also accommodating for possible Brauer--Manin obstructions and removing a thin set if necessary. One crucial difference however is that Wood obtains a finite sum of Euler products where the Euler factors agree at all but finitely many places, but for general balanced heights we obtain a finite sum of Euler products where the Euler factors can differ at infinitely many places. See \cite[Ex.~1.8(1)]{Tav25} for an explicit such example for $\Z/4\Z$. We explain how to interpret Wood's algebraic condition in terms of Brauer groups in \cite[\S10.7.4]{LS24}; briefly Wood's condition implies that the partially unramified Brauer group equals the unramified Brauer group, so by Theorem~\ref{thm:trivial_on_O_v_R} it can only play a role at finitely many places.
\end{remark}

\subsection{Heuristic for the conjectural asymptotic} \label{sec:heuristic}
We now give an illustrative heuristic explanation for Conjecture \ref{conj:balanced}. A simplified version of this heuristic for the discriminant, not taking into account the Galois action on $G(-1)$, appears in \cite[\S4]{Mal04}. We upgrade the ideas underlying the heuristic presented here to a precise conjecture in \S\ref{sec:Bhargava}. We assume that the weight takes integer values and that $k = \Q$.
	
Recall from \eqref{def:product_local_heights}
that the height is a product of local heights. The distribution of these integers 
is controlled by the minimal $p$-adic valuations of the local heights, which occur when the 
ramification type has minimal weight. Assuming no other structure and ignoring higher powers of $p$, we therefore expect that
$H(\varphi)$ behaves like a random integer of the form $n^{\min w} = n^{1/a(H)}$, where $n$ is squarefree.
However there may be many $\varphi$ with the same height; indeed this happens precisely when
the weight function takes minimal value. By Remark \ref{rem:surjective_ramification_type},
the possible conjugacy classes
of minimal weight which can be realised at a given good prime $p$ are exactly $\mathcal{M}(H)^{\Gamma_{\Q_p}}$. Assuming no other structure we expect
that each such conjugacy class occurs equally often.
Therefore, we expect that the quantity in Conjecture \ref{conj:balanced} behaves like
$$\sum_{n \leq B^{a(H)}} \mu^2(n)f(n), \quad \textrm{ where } f(n) = \prod_{p \mid n} |\mathcal{M}(H)^
{\Gamma_{\Q_p}}|$$
where $\mu$ denotes the M\"{o}bius function. We have the following asymptotic, using the Artin $\L$-functions from \S \ref{sec:global_Tamagawa}.

\begin{lemma} \label{lem:mu_f}
	\begin{align*}
	&\sum_{n \leq B^{a(H)}} \mu^2(n)f(n) \\
	&\sim \frac{a(H)^{b(k,H)-1} \L^*(\mathcal{M}(H),1)}{(b(k,H)-1)!} \prod_p \lambda_p^{-1}\left(1 + \frac{\#\mathcal{M}(H)^{\Gamma_{\Q_p}}}{p}\right) B^{a(H)}(\log B)^{b(k,H)-1}
	\end{align*}
	as $B \to \infty$
	\end{lemma}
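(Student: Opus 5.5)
The plan is to use the standard Dirichlet series / Tauberian approach. Consider the Dirichlet series
$$F(s) := \sum_{n \geq 1} \frac{\mu^2(n) f(n)}{n^s} = \prod_p \left(1 + \frac{\#\mathcal{M}(H)^{\Gamma_{\Q_p}}}{p^s}\right).$$
The Euler product exists because $f$ is multiplicative on squarefree integers. Write $\mathcal{M} := \mathcal{M}(H)$ and $b := b(k,H)$.

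The key input is to compare $F(s)$ with the Artin $\L$-function $\L(\mathcal{M},s)$ of the permutation representation $\C[\mathcal{M}]$. For an unramified prime $p$, the local factor is $\L_p(\mathcal{M},s) = \det(1 - \Frob_p p^{-s} \mid \C[\mathcal{M}])^{-1}$. Its expansion is $1 + \mathrm{tr}(\Frob_p \mid \C[\mathcal{M}]) p^{-s} + O(p^{-2\sigma})$, and the trace of a permutation matrix is the number of fixed points, namely $\#\mathcal{M}^{\Gamma_{\Q_p}}$. Hence
$$G(s) := F(s)\,\L(\mathcal{M},s)^{-1} = \prod_p \L_p(\mathcal{M},s)^{-1}\left(1 + \frac{\#\mathcal{M}^{\Gamma_{\Q_p}}}{p^s}\right).$$
Here each Euler factor is $1 + O(p^{-2\sigma})$, for all but finitely many $p$, with constants uniform in $p$ since $\#\mathcal{M}$ is bounded. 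Consequently $G(s)$ is absolutely convergent and holomorphic on $\re s > 1/2$. At $s=1$ it equals $\prod_p \lambda_p^{-1}(1 + \#\mathcal{M}^{\Gamma_{\Q_p}}/p)$, which is nonzero because every factor is nonzero.

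By Lemma \ref{lem:Dedekind_zeta}, $\L(\mathcal{M},s)$ is a product of $b$ Dedekind zeta functions. It therefore extends meromorphically to a neighbourhood of $\re s \geq 1$, has a pole of order exactly $b$ at $s = 1$ with leading coefficient $\L^*(\mathcal{M},1)$, and has no other poles or zeros on the line $\re s = 1$. It follows that $F(s)$ has the same analytic behaviour: a pole of order $b$ at $s=1$ with $(s-1)^b F(s) \to \L^*(\mathcal{M},1) G(1)$.

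Now apply a Tauberian theorem for Dirichlet series with nonnegative coefficients and a pole of order $b$. The Delange–Ikehara theorem suffices, or alternatively a Selberg–Delange/Perron argument using standard zero-free regions for Dedekind zeta functions. This gives
$$\sum_{n \leq X} \mu^2(n) f(n) \sim \frac{\L^*(\mathcal{M},1) G(1)}{(b-1)!} X (\log X)^{b-1}.$$
Substitute $X = B^{a(H)}$. Then $(\log X)^{b-1} = a(H)^{b-1} (\log B)^{b-1}$, which produces the factor $a(H)^{b(k,H)-1}$ and the stated asymptotic.

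The main obstacle is verifying the hypotheses of the Tauberian theorem, namely continuity of $(s-1)^b F(s)$ up to the closed half-plane $\re s \geq 1$ with no poles on $\re s = 1$ other than at $s=1$. This reduces to the nonvanishing of Dedekind zeta functions on $\re s = 1$, which is classical. A secondary point is the uniform bound $1+O(p^{-2\sigma})$ on the Euler factors of $G(s)$. The finitely many ramified primes contribute only harmless holomorphic nonvanishing factors near $s=1$. One should also check that $\lambda_p = \L_p(\mathcal{M},1)$ matches the local factor used, including at ramified primes, but there the finitely many factors simply go into $G(1)$ consistently.
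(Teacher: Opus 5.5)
Your proposal is correct and follows the paper's proof. You write the Dirichlet series of $\mu^2 f$ as $\L(\mathcal{M}(H),s)G(s)$ with $G$ holomorphic near $\re s \geq 1$, use Lemma \ref{lem:Dedekind_zeta} to get a pole of order $b(k,H)$ at $s=1$, apply a Tauberian theorem, and substitute $x = B^{a(H)}$; along the way you supply the Euler-factor comparison that the paper leaves as ``unravelling definitions''. One small correction: the Tauberian hypotheses only require that there are no poles on $\re s = 1$ other than $s=1$, which is automatic because Dedekind zeta functions are holomorphic away from $s=1$ and $G$ is holomorphic on $\re s > 1/2$, so the nonvanishing of $\zeta_{k(c)}$ on $\re s = 1$ is not actually needed.
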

\begin{proof}
We shall be brief as we give similar arguments in more depth in \S\ref{sec:Bhargava}.
The function $f$ is multiplicative, so we have the Euler product.
$$\sum_{n=1}^\infty \frac{\mu^2(n)f(n)}{n^s} = \prod_p
\left(1 + \frac{\#\mathcal{M}(H)^{\Gamma_{\Q_p}}}{p^s}\right).$$
Unravelling definitions, this is easily seen to equal $\L(\mathcal{M}(H),s) G(s)$,
where $G$ is holomorphic on $\re s \geq 1$. From Lemma \ref{lem:Dedekind_zeta}, the $\L$-function is given by a product of Dedekind zeta functions, with one zeta function per element of $\mathcal{M}(H)/\Gamma_k$. Thus it has a pole of order $b(k,H)$ at $s = 1$ and no other poles for $\re s \geq 1$. Applying a Tauberian theorem (e.g.~\cite[Thm.~A]{PTBZ25}) one obtains
$$\sum_{n \leq x} \mu^2(n)f(n) \sim \frac{\L^*(\mathcal{M}(H),1)}{(b(k,H)-1)!} \prod_p \lambda_p^{-1}\left(1 + \frac{\#\mathcal{M}(H)^{\Gamma_{\Q_p}}}{p}\right) \cdot x (\log x)^{b(k,H)-1}.$$
Taking $x = B^{a(H)}$ proves the result.
\end{proof}

We obtain from Lemma \ref{lem:mu_f} an expression very close to the formula in Conjecture~\ref{conj:balanced}, on using the mass formula \eqref{eqn:measure_minimal_weight}.

The key differences are the factor $1/\#\dual{G}^{\Gamma_k}$ and the Brauer--Manin obstruction. The factor $1/\#\dual{G}^{\Gamma_k}$ may look somewhat innocuous, however finding this rational number was an open problem which Bhargava explicitly posed in \cite[(8.6)]{Bha07} for its value. As already explained, it appears as part of Peyre's effective cone constant.

The heuristic can actually be modified to take into account possible Brauer--Manin obstructions: these would imply that some combinations of the $p$-adic values of the ramification type are not actually realised by some global $\varphi$. To account for these, we expect that $BG[k]$ is dense in $BG(\Adele_k)_{\mathcal{M}(H)}^{\Br}$, where the Brauer--Manin obstruction is taken with respect to $\Br^{\mathrm{e}}_{\mathcal{M}(H)} BG$ (see Conjecture \ref{conj:dense}). However $\Br^{\mathrm{e}}_{\mathcal{M}(H)} BG$ is finite by Corollary \ref{cor:Br_finite}, so one can cut out the Brauer--Manin set using character orthogonality, as in Lemma \ref{lem:finite_sum_Euler_products}. Moreover by Theorem \ref{thm:trivial_on_O_v_R} each such element takes the trivial value on the minimally ramified elements of $BG[k_v]$. Hence these Brauer group elements only play a role for the minimally ramified elements at possibly finitely many primes, which does not affect the overall asymptotic behaviour of the sum. We give full details when we upgrade this heuristic to a precise conjecture in \S \ref{sec:Bhargava}, which also formalises Bhargava's heuristic.
		


\subsection{Unbalanced heights} For unbalanced heights, we reduce to the balanced case via the \textit{Iitaka fibration}, motivated by the case of Manin's conjecture as explained in \S \ref{sec:leading_constant_Manin}. We take a definition of Iitaka fibration in our setting which seems most natural and gives the expected behaviour.

\begin{definition}
	Let $H$ be a height function with weight function $w$. Let $\mathcal{M}(w)$ be the minimal weight conjugacy classes. We define the associated \textit{Iitaka fibration} to be $G \to G/\langle \mathcal{M}(w) \rangle$ the quotient of $G$ via the subgroup generated by $\mathcal{M}(w)$.
\end{definition}

The restriction of $H$ to the fibres of the Iitaka fibration is now balanced. We sort the counting problem according to the fibres of the Iitaka fibration, with the expectation that the sum over all the leading constants converges. This leads to the following prediction.

\begin{conjecture} \label{conj:non_balanced}
	If $M := \langle \mathcal{M}(w) \rangle \neq G$ then there exists a thin subset $\Omega \subset BG[k]$ such that
	$$\frac{1}{|Z(G)^{\Gamma_k}|}\#\{ \varphi \in BG[k] \setminus \Omega : H(\varphi) \leq B\} \sim c(k,G,H,\Omega) B^{a(H)} (\log B)^{b(k,H)-1}$$
	where $a(H)$ and $b(k,H)$ are as in Conjecture \ref{conj:balanced}, and 
	$$c(k,G,H,\Omega) = \frac{1}{|Z(G/M)^{\Gamma_k}|}
	\sum_{\substack{\psi \in \im(BG[k] \to B(G/M)[k]) \\
	BM_\psi(k) \not \subseteq \Omega}} c(k,M_\psi,H)$$
	with the resulting sum being convergent.
\end{conjecture}

Here we denote by $M_\psi$ the inner twist of $M$ by a lift of $\psi$ to $G$, in the sense of Example~\ref{ex:inner_twist}.

\begin{example}
	The main example of Conjecture \ref{conj:non_balanced} is counting $D_4$-quartics of
	bounded discriminant over $\Q$. Here Malle's conjecture is known \cite[Thm~1.3]{CDO02}, 
	with the leading constant
	\[\frac{1}{2} \sum_{D} \frac{2^{-i(D)}}{D^2}\zeta^*_{\Q(\sqrt{D})}(1)  
	\zeta_{\Q(\sqrt{D})}(2)^{-1}\] 
	where the sum is over fundamental discriminants $D$.
	This is given by an infinite sum of Euler products, as predicted by Conjecture \ref{conj:non_balanced}. More precisely, the minimal index conjugacy classes are given by 
	the class of the transpositions, so the Iitaka fibration is $BD_4 \to BC_2$. Thus Conjecture
	\ref{conj:non_balanced} predicts a sum over elements of $BC_2(\Q)$, i.e.~quadratic extensions,
	which is exactly what is obtained. For a verification that each summation is as predicted,
	i.e.~the leading constant of each fibre agrees with Conjecture \ref{conj:balanced},
	see \cite[\S10.2]{LS24}.
\end{example}

\begin{remark}
Wide classes of asymptotic formulae have been recently obtained for certain $G$ when counting by discriminant, using the so-called inductive method \cite{AOWW24, AB26, Cho25}. These results have a sum of leading constants over the Iitaka fibration, as predicted by Conjecture \ref{conj:non_balanced}. However the full Conjecture \ref{conj:non_balanced} is not typically verified, as one needs also to verify that the leading constant of each fibre agrees with the prediction from Conjecture \ref{conj:balanced}.
\end{remark}

\begin{question}
	Wright's method from \cite{Wri89} shows that when counting abelian fields of bounded discriminant, the leading constant is always given by a finite sum of Euler products, even when the discriminant is not balanced. The expression obtained this way is subtly different to ours, as it involves a combination of both the Brauer--Manin obstruction and M\"{o}bius inversion. 
	
	Is there any choice of $G$ and $H$ such that the leading constant in Conjecture \ref{conj:non_balanced} \textbf{cannot} be written as a finite sum of Euler products?
	
	We suspect a variant of Wright's argument should show that one obtains a finite sum of Euler products if the minimal weight elements generate $G/Z(G)$. It would be advantageous to have a rigorous proof of this and explore more generally height functions with this property, that one might call \emph{pseudo-balanced}.
\end{question}

\subsection{The total count}

Our main conjectures require the removal of a thin subset. This may seem unsatisfactory as it does not directly give a predicted asymptotic for the total count $\#\{ \varphi \in BG[k] : H(\varphi) \leq B\}$ or for the surjective cocycles.

However the framework is set up so that our conjectures can be applied to each thin subset removed to obtain an inductive description for the total count as a finite sum of asymptotic formulae. We briefly explain this here, with further details to be found in \cite[\S9.3]{LS24}.

Let $\Omega \subset BG[k]$ be a thin set. We assume that $\Omega$ is a finite union of basic thin sets, as in Definition \ref{def:thin}. Treating each piece separately, we may assume that $\Omega$ is itself a basic thin set, hence is the image of a map
$$BH[k] \to BG_{\varphi}[k]$$
where $H \subset G_{\varphi}$ is a proper $\Gamma_k$-subgroup of some inner twist of $G$. We therefore apply Conjecture \ref{conj:non_balanced} to the counting problem for $BH[k]$ to obtain a precise prediction for the number of cocycles of bounded height in $BH$. 

However extreme care is needed: firstly the automorphism groups when passing from $BH$ to $BG_{\varphi}$ may change, so one needs to carefully keep track of groupoid cardinalities. Secondly, as Conjecture \ref{conj:non_balanced} requires the removal of a thin set, it is theoretically possible that an additional thin set must be removed from $BH[k]$. This gives an inductive procedure for writing down a prediction for the total count.

In \cite[Rem.~9.9]{LS24} we show that, at least if the Galois action on $G$ is trivial, then this second inductive step is not required providing one is counting only surjective elements of $BG[k]$.

\subsubsection{The example $D_6$}
To illustrate the above procedure, we apply it in detail to the special case of $G = D_6 \cong C_2 \times S_3$ extensions. This will also allow us to demonstrate some of the general theory and results in the survey for an explicit example. An asymptotic formula for $D_6$-fields of bounded discriminant has been recently obtained in \cite{KLOST25}. We instead focus on ordering by radical discriminant $\rad \Delta$, which has the interesting feature that there are accumulating thin subfamilies of fields (a similar phenomenon happens for other dihedral groups, see \cite[\S10.4]{LS24}).

\begin{conjecture} \label{conj:D_6}
	\[
	\frac{1}{2}\#\{\varphi \in BD_6[\Q]: \rad \Delta(\varphi) \leq B\} \sim \left(\frac{35}{8748} + 
	\frac{5}{23328}\right)\prod_{p > 3}\left(1 - \frac{1}{p}\right)^5 \left(1 + \frac{5}{p}\right) B(\log B)^4.
	\]
	
	The leading constant takes the numerical value $0.0014623\dots$. Moreover, when $D_6$-extensions are ordered by radical discriminant then $3/59 \approx 5.08 \%$ contain the field $\Q(\mu_3)$.
\end{conjecture}

We now explain in detail how to obtain this prediction.  The group $D_6$ is \cite[\href{https://www.lmfdb.org/Groups/Abstract/12.4}{12.4}]{LMFDB}. All group-theoretic information we use can be read off from the LMFDB.

We have $\mathcal{M}(\rad \Delta) = \mathcal{C}_G^*$. The group $D_6$ has a rational character table with $5$ non-trivial conjugacy classes so $b(\Q, \rad \Delta) = 5$.

We have $\exp(G) = 6$ and $\Q(\mu_6) = \Q(\sqrt{-3})$. The possible accumulating thin subsets from \eqref{eqn:thin_set} are thus classified by maps $D_6 \to \Gal(\Q(\sqrt{-3})/\Q) \cong C_2$. There are three such non-trivial maps, two have kernel $S_3$ which has only two conjugacy classes so they cannot be accumulating.

Consider now the map $\pi: D_6 \to C_2$ with kernel $C_6$, this map has a section given by a transposition so there exists a map $\varphi: \Gamma_\Q \to \Gal(\Q(\sqrt{-3})/\Q) \to D_6$ such that $\pi(\varphi)$ defines the quadratic extension $\Q(\sqrt{-3})/\Q$.

The transpositions act by inversion on $C_6 \subset D_6$ so the inner twist $(C_6)_{\varphi}$ is isomorphic to $\mu_6$. The pullback height $\pi^* \rad \Delta$ is still anticanonical and $\mu_6(-1) = C_6$ so $b(\Q, \pi^* \rad \Delta) = 5$.

There are thus two leading constants contributing to the total count in this case. The first one from $G$ and the second one from $\mu_6$. Note that the map $B\mu_6[\Q] \to BD_6[\Q]$ is not injective. But the size of the fibers can be controlled by \cite[Lemma~2.14]{LS24} which shows that in terms of groupoid counts this map is $2 = |D_6/C_6|$ to $1$. As $Z(D_6) = C_2$ our Conjecture \ref{conj:balanced} predicts that 
\[
\frac{1}{2}\#\{\varphi \in BD_6[\Q]: \rad \Delta(\varphi) \leq B\} \sim \left(c(\Q,D_6,\rad \Delta) + \frac{1}{2}c(\Q, \mu_6, \pi^*(\rad \Delta))\right) B (\log B)^4.
\] 

We will now compute these leading constants separately. Note that the Brauer groups for both cases are trivial, this is \cite[Lem.~6.32]{LS24} for the algebraic part and \cite[Lem.~6.38(1, 3) ]{LS24} for the transcendental part. It remains to compute the local Tamagawa measures.
For $c(\Q,D_6,\rad \Delta)$ we use the isomorphism $D_6 \cong C_2 \times S_3$. 
\begin{enumerate}
	\item For $v = \infty$ we have $\tau_{\infty} = \frac{8}{12} = \frac{2}{3}$.
	\item For a prime $p > 3$ we have $\tau_p = 1 + \frac{5}{p}$ by the mass formula (Theorem \ref{thm:mass_formula}).
	\item If $p = 3$ then we use that $BD_6(\Q_3) = BC_2(\Q_3) \times BS_3(\Q_3)$ and use Lemma~\ref{lem:S_n} to identify each factor with a groupoid of \'etale algebras. We can then use the LMFDB \cite[\href{https://www.lmfdb.org/padicField/}{padicField}]{LMFDB} to enumerate such \'etale algebras. This gives
	\[
	\tau_3 = 1 + (2 \cdot \frac{1}{2}) \cdot \frac{1}{3} + (2 \cdot \frac{1}{2} + 3 \frac{1}{3} + 6 ) \cdot \frac{1}{3} + (2 \cdot \frac{1}{2}) \cdot (2 \cdot \frac{1}{2} + 3 \cdot \frac{1}{3} + 6 ) \cdot \frac{1}{3} = \frac{20}{3}.
	\]
	Here the summands denote the following contributions respectively: unramified, ramified in $BC_2$ and unramified in $BS_3$, unramified in $BC_2$ and ramified in $BS_3$, ramified in both $BC_2$ and $BS_3$.
	\item The prime $2$ can be dealt with in the same way the prime $3$. It gives 
	\[
	\tau_2 = 1 + (6 \cdot \frac{1}{2}) \cdot \frac{1}{2} + (6 \cdot \frac{1}{2} + 1 ) \cdot \frac{1}{2} + (6 \cdot \frac{1}{2}) \cdot (6 \cdot \frac{1}{2} + 1) \cdot \frac{1}{2} = \frac{21}{2}.
	\]
\end{enumerate}
The leading constant is thus
\[
c(\Q,D_6,\rad \Delta) = \frac{1}{2 \cdot 4 !} \cdot \frac{2}{3} \cdot \frac{21}{2} \cdot \frac{1}{2^5} \cdot \frac{20}{3} \cdot \frac{2^5}{3^5} \prod_{p > 3}\left(1 - \frac{1}{p}\right)^5 \left(1 + \frac{5}{p}\right).
\]
Note that the convergence factors come from $\zeta(s)^5$ and that $\zeta^*(1)=1$.
The map $B \mu_6 \to BD_{6}$ is given on the level of fields by the formula $\Q(\sqrt[6]{a}) \to \Q(\sqrt[6]{a}, \sqrt{-3})/\Q$. Note in particular that the image is always ramified at $3$. It is unramified at $2$ if and only if $a \equiv 1 \pmod 4$.

We can then compute the Tamagawa measures as
\begin{enumerate}
	\item At $\infty$ we have $\tau_{\infty} = \frac{\# \HH^1(\R, \mu_6)}{\# \HH^0(\R, \mu_6)} = 1$.
	\item For a prime $p > 3$ we can use the mass formula and we have $\tau_p = 1 + \frac{5}{p}$.
	\item For $p = 3$ we have $\tau_3 = \# \HH^1(\Q_3, \mu_6)/(3 \cdot \# \HH^0(\Q_3, \mu_6))= 6$ because $\Q_3^{\times} \cong \{3\}^{\Z} \times \F_3^{\times} \times (1 + 3\Z_3)$ and $(1 + 3\Z_3) \cong \Z_3$ by the $3$-adic logarithm.
	\item If $p = 2$ then we have $\HH^1(\Q_2, \mu_6) \cong \HH^1(\Q_2, \mu_2)$ because cubing is invertible on $\Q_2^{\times}$ by Hensel's lemma. We thus have $\tau_2 = (2 + \frac{6}{2})/\# \HH^0(\Q_2, \mu_6) = \frac{5}{2}$.
\end{enumerate}
The leading constant is thus
\[
c(\Q, \mu_6, \pi^*(\rad \Delta)) = \frac{1}{6 \cdot 4!} \cdot \frac{5}{2} \cdot \frac{1}{2^5} \cdot 6 \cdot \frac{2^5}{3^5} \prod_{p > 3}\left(1 - \frac{1}{p}\right)^5 \left(1 + \frac{5}{p}\right).
\]
Combining these two leading constants leads us to Conjecture \ref{conj:D_6}.

\subsection{Differences over function fields} 

Assume now that $k$ is the global function field of a smooth projective curve $X$ over the finite field $\F_q$. The theory of heights, Tamagawa measures and Brauer groups works exactly as in the case of number fields, assuming that $G$ is tame, i.e.~$\gcd(|G|,q)=1$. In what follows we make this assumption, progress on the wild case has been made in \cite{DYBM2,GS25,He26}, but there
is no theory of the Brauer group in the wild case, nor a prediction for the leading constant. We will assume that all local heights take values in $q^{\Z}$; this is true for natural height functions such as the discriminant
or radical discriminant.

A consequence is that the global height always takes values in $q^{\Z}$. The counting function $\#\{ \varphi \in BG[k] \setminus \Omega: H(\varphi) \leq B\}$ thus makes large jumps as $B$ crosses the values in $q^{\Z}$ and it is unreasonable to expect this function to have an asymptotic growth rate. We will instead study the counting function $N(G, H, q^m) := \#\{ \varphi \in BG[k] \setminus \Omega: H(\varphi) = q^{m}\}$. 

One new issue in the function field case is that there is no single leading constant, instead there is a certain periodic behavior in the limit. For example, the projective discriminant of a cover, given by $q^{\deg R}$ where $R$ is the discriminant divisor of the cover on the curve $X$, always has even exponent by the Riemann-Hurwitz formula. In this case $N(G, H, q^{m}) = 0$ for $m$ odd. 

One can extract a single leading constant from these periodic leading constants by averaging the leading constants. Note that if $B = q^{m}$ then $\log B = m \log q$ so the following Conjecture is analogous to Conjecture \ref{conj:balanced}.
\begin{conjecture}
	Assume that $H$ is balanced and $G$ is tame. Then
	\[
	\lim_{M \to \infty} \frac{1}{M}\sum_{m = 1}^M \frac{N(G, H, q^m)}{(m \log q)^{b(k, H) - 1} q^{a(H)m}} = (a(H) \log q) \cdot c(k, G, H) 
	\]
	where $a(H), b(k, H)$ and $c(k, G, H)$ are exactly as in Conjecture \ref{conj:balanced}.
\end{conjecture}
The explanation for the extra factor of $a(H) (\log q)$ is that the continuous analogue of $N(G, H, q^m)$ is the number of extensions whose height is in the interval $[B/q, B]$ as $q \to 1$.

This conjecture is proven in \cite{San25} for $k = \F_q(t)$, $G$ a $\Gamma_{\F_q}$-group and $H$ a height with everywhere good reduction under the assumption that $q$ is sufficiently large with respect to $|G|$. Indeed, even more is proven as it is shown that the leading constants have period dividing $|G|^2$ and an explicit formula for all leading constants is given. Note that in \cite[Thm.~1.2]{San25} there appear no factors of $\log q$, this is because they are absorbed in a different normalization of the Tamagawa measure.

\section{Equidistribution} \label{sec:equi}
In this section we study the problem of counting number fields with local conditions imposed. We call this \emph{equidistribution}; we use this terminology since, as we explain, the problem concerns the convergence of a sequence of measures.

\subsection{Weak equidistribution}

We first consider the problem of imposed finitely many local conditions, which we call \emph{(weak) equidistribution}. Recall that a subset of a measure space is called a \emph{continuity set} if its boundary has measure $0$.

\begin{conjecture}[Equidistribution] \label{conj:equi}
	Let $H$ be a balanced height and $W \subseteq \prod_v BG[k_v]$ be a continuity set.
	Then there exists a thin subset $\Omega \subset BG[k]$ such that
	$$\lim_{B \to \infty}\frac{\#\{ \varphi \in BG[k] \setminus \Omega : \varphi \in W, H(\varphi) \leq B\}}
	{\#\{ \varphi \in BG[k] \setminus \Omega : H(\varphi) \leq B\}} = 
	\frac{\tau_H(W \cap BG[\Adele_k]_{\mathcal{M}(H)}^{\Br})}
	{\tau_H( BG[\Adele_k]_{\mathcal{M}(H)}^{\Br})}.$$
\end{conjecture}

\begin{example} \label{ex:basic_open}
	A basic example of continuity set is one which is both open and closed. Since $BG[k_v]$ is discrete, these are relatively easy to construct. For example any subset of the form
	$$W = \prod_{v \in S} \{ \varphi_v \} \prod_{v \notin S} BG[k_v]$$
	for any finite set of places $S$ and $\varphi_v \in BG[k_v]$. In fact to prove weak equidistribution,
	it suffices to prove that it holds for subsets $W$ of this form \cite[Prop.~9.11]{LS24}. This result is proved by a standard topological argument, namely approximating arbitrarily well any continuity set by a collection of basic opens.
\end{example}

The terminology \emph{equidistribution} and the formulation in terms of continuity sets is justified by the following lemma, which makes clear that it concerns convergence of a sequence of measures.
We take into account the Brauer--Manin obstruction, as in Peyre \cite[3.21]{Pey21}, and also renormalise the Tamagawa measure to obtain a probability measure.

\begin{lemma} \label{lem:convergence}
	Let 
	$$\delta_{B} := \frac{1}{\#\{\varphi \in BG[k] \setminus \Omega : H(\varphi) \leq B\}}
	\sum_{\substack{\varphi \in  BG[k] \setminus \Omega \\
	H(\varphi) \leq B}} \delta_{\varphi}$$
	where $\delta_\varphi$ denotes the Dirac measure on $BG[\Adele_k]_{\mathcal{M}(H)}^{\Br}$ supported at $\varphi$. Then Conjecture \ref{conj:equi} holds if and only if $\delta_{B}$ converges weakly to the measure 
	$$\frac{\tau_{H}|_{BG[\Adele_k]_{\mathcal{M}(H)}^{\Br}}}{\tau_H(BG[\Adele_k]_{\mathcal{M}(H)}^{\Br})}, \quad \text{ as }B \to \infty.$$
\end{lemma}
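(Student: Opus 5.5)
This is the Portmanteau theorem in our setting. Write $X := BG[\Adele_k]_{\mathcal{M}(H)}^{\Br}$ and $\mu := \tau_H|_X/\tau_H(X)$. We first check that $\delta_B$ really is a probability measure on $X$, i.e.\ that each global $\varphi$ gives a point of $X$.
\begin{itemize}
\item For all but finitely many $v$ the local component $\varphi_v$ is unramified, so it lies in $BG(\O_v) \subseteq BG(\O_v)_{\mathcal{M}(H)}$.
\item By the reciprocity sequence \eqref{seq:fundamental_CFT}, $\varphi$ is orthogonal to $\Br^{\mathrm{e}}_{\mathcal{M}(H)} BG$.
\end{itemize}
Since $\Br^{\mathrm{e}}_{\mathcal{M}(H)} BG$ is finite (Corollary \ref{cor:Br_finite}) and the pairing is continuous (Lemma \ref{lem:Brauer-Manin_well_defined}), $X$ is a closed, indeed open-and-closed, subset of $BG(\Adele_k)_{\mathcal{M}(H)}$. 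By Lemma \ref{lem:large_open} and Conjecture~\ref{conj:balanced}'s nonvanishing, $\tau_H(X) > 0$, so $\mu$ is a well-defined probability measure. The space $BG(\Adele_k)_{\mathcal{M}(H)}$ is a restricted product of finite discrete sets, hence locally compact, Hausdorff, second countable and metrisable. So $X$ is Polish and the Portmanteau theorem applies.

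The key steps are as follows.
\begin{enumerate}
\item Check that the ratio in Conjecture \ref{conj:equi} is exactly $\delta_B(W)$, after identifying $W$ with $W \cap X$ via the inclusion $X \subseteq \prod_v BG[k_v]$.
\item Check that the right-hand side of Conjecture \ref{conj:equi} is $\mu(W \cap X)$.
\end{enumerate}
Then Conjecture \ref{conj:equi} for all continuity sets $W$ says precisely that $\delta_B(A) \to \mu(A)$ for every $\mu$-continuity set $A \subseteq X$. By the Portmanteau theorem this is equivalent to weak convergence $\delta_B \Rightarrow \mu$.

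Two compatibility issues need care.

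\emph{Topology.} The conjecture is phrased for continuity sets $W \subseteq \prod_v BG[k_v]$ with the product topology, while weak convergence lives on $X$ with the restricted-product topology, which is finer. For the direction from the conjecture to weak convergence, it suffices to test on a convergence-determining class. A good choice is the basic opens of Definition \ref{def:basic_open} intersected with $X$. These are open and closed in $X$, so their boundaries are empty, and finite intersections of them are again of this form. Each such set is $W \cap X$ for a finitely-constrained product set $W$ as in Example~\ref{ex:basic_open}, together with conditions $\varphi_v \in BG(\O_v)_{\mathcal{M}(H)}$ at the remaining places. Since global points automatically satisfy these conditions for $v$ outside a finite set depending on $\varphi$, the ramification-type restrictions at infinitely many places need handling. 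I would avoid this by noting that finite intersections of the product-type sets of Example \ref{ex:basic_open} form a base of $X$ in the subspace topology from $\prod_v$. This topology is coarser but still Hausdorff and compact-like on the relevant pieces. So the standard argument (for instance Billingsley, Thm.~2.2: a $\pi$-system in which every open set is a countable union of members) gives weak convergence.

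\emph{Which side has boundary zero.} For the converse direction, from weak convergence to the conjecture, I would use the following. The boundary of $W \cap X$ in $X$ is contained in $\partial W \cap X$, because $X$ is clopen and the inclusion into the product topology is continuous. Hence a $\tau_H$-continuity set $W$ gives a $\mu$-continuity set $W \cap X$, and Portmanteau yields the limit.

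I expect the main obstacle to be the topological comparison between $\prod_v BG[k_v]$ and the restricted product $BG(\Adele_k)_{\mathcal{M}(H)}$: one must show that the product-type basic sets generate enough of the restricted-product topology on $X$ to determine weak convergence. I would try first to verify that every open subset of $X$ is a countable union of such sets intersected with $X$. This should follow from second countability together with the fact that, within $X$, the sets $BG(\O_v)_{\mathcal{M}(H)}$ can be handled by monotone limits in $S$ and the absolute convergence of $\tau_H$ (\cite[Thm.~8.17]{LS24}).
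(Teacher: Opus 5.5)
There is a genuine gap in the direction ``Conjecture~\ref{conj:equi} $\Rightarrow$ weak convergence''. It comes from the topology mismatch you introduce yourself: you give $X$ the restricted-product topology, but you only test on continuity sets of $\prod_v BG[k_v]$ with the product topology.

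Billingsley's Theorem~2.2 applied to cylinder sets only gives weak convergence in the coarser product subspace topology. Your planned step, that every open subset of $X$ is a countable union of cylinders intersected with $X$, would force the two topologies on $X$ to coincide. That is false whenever $\mathcal{M}(H) \neq \mathcal{C}_G^*$. Set $K_S := \prod_{v \in S} BG[k_v] \prod_{v \notin S} BG(\O_v)_{\mathcal{M}(H)}$. Then $X \cap K_S$ is open in $X$. But every cylinder around one of its points contains points of $X$ outside $K_S$: change the point at finitely many far-away places to a non-minimal ramification type, using Chebotarev, surjectivity of $\rho_{G,v}$ at good places, and finiteness of $\Br^{\mathrm{e}}_{\mathcal{M}(H)} BG$ to stay in the Brauer--Manin set.

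This is not a technicality. On the Polish space $X$ with the restricted topology, weak convergence of $\delta_B$ forces uniform tightness: for each $\epsilon > 0$ there is a finite $S$ with $\delta_B(X \cap K_S) \geq 1 - \epsilon$ for all large $B$. That is an arithmetic tail bound on global $\varphi$ with non-minimal ramification at large places, the analogue of Bhargava's uniformity estimates for squarefree discriminants. It cannot be deduced from statements with finitely many local conditions. Indeed, $K_S$ has empty interior but positive $\tau_H$-measure in the product topology, so it is never a continuity set there. The absolute convergence of $\tau_H$ you invoke controls the tail of the limit measure, not of the $\delta_B$. Statements involving infinitely many conditions like this are the content of Conjecture~\ref{conj:restricted_ramification}.

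The paper's proof is simply the Portmanteau theorem, which tacitly uses one and the same topology for the continuity sets and for weak convergence. Your argument closes up if you do the same and give $X$ the subspace topology from $\prod_v BG[k_v]$. The cylinder sets of Example~\ref{ex:basic_open}, intersected with $X$, form a countable $\pi$-system of clopen sets in which every open set is a countable union of members, so the conjecture gives weak convergence. For the converse, your inclusion $\partial_X(W \cap X) \subseteq \partial W \cap X$ holds for any subspace (clopenness of $X$ is not needed). Alternatively, read ``continuity set'' inside $X$ for whichever topology you put on $X$, and Portmanteau gives the equivalence directly.

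A minor point: $\tau_H(X) > 0$ already follows from Lemma~\ref{lem:large_open}, since $X$ is a nonempty open set (it contains the global points). There is no need to invoke Conjecture~\ref{conj:balanced}.
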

\begin{proof}
	Immediate from the Portmanteau Theorem \cite[Thm.~13.16]{Klen14}.
\end{proof}

\subsection{Restricted ramification type}
In \cite[Conj~9.15]{LS24} we put forward a more general conjecture which imposes infinitely many well-behaved conditions, which we called \emph{strong equidistribution}. It concerns counting number fields of \emph{restricted ramification type}. By which we mean counting cocycles whose ramification type is restricted to lie in a proper subset of $\mathcal{C}_G$. We encode this condition using the large opens from Definition \ref{def:basic_open}, which have positive measure by Lemma \ref{lem:large_open}.

\begin{conjecture}[Strong equidistribution] \label{conj:restricted_ramification}
	Let $H$ be a big balanced height function on $BG$ and $W \subseteq BG(\Adele_k)_{\mathcal{M}(H)}$ a
	large open such that $W^{\Br} \neq \emptyset$.	Then there exists a thin subset $\Omega \subset BG[k]$ such that
	\begin{align*}
	    \frac{1}{|Z(G)^{\Gamma_k}|}&\#\{ \varphi \in BG[k] \setminus \Omega : H(\varphi)  \leq B, \varphi \in W\} \\ 
        &\sim c(k,G,H,W) B^{a(H)} (\log B)^{b(k,H)-1}
	\end{align*}
	where 
	$$
	c(k,G,H,W) 
	 = \frac{a(H)^{b(k,H)-1}\cdot |\Br^{\mathrm{e}}_{\mathcal{M}(H)} BG| \cdot \tau_{H}( W^{\Br})}
	{\#\dual{G}^{\Gamma_k} (b(k,H) - 1)!} \\
 	$$
 	where the Brauer--Manin obstruction is taken with respect to the Brauer group 
 	$\Br^{\mathrm{e}}_{\mathcal{M}(H)} BG$.
\end{conjecture}

Let us spell out explicitly what is being counted in Conjecture \ref{conj:restricted_ramification}. Let $W$ be a large open determined by sets $W_v \subseteq BG[k_v]$. The condition in Definition \ref{def:basic_open} that $BG(\O_v)_{\mathcal{M}(H)} \subseteq W_v$ means that we must count at least all minimally ramified cocycles at all but finitely many places. However the fact that we permit $W_v$ to be strictly smaller than $BG[k_v]$ means that we are allowed to exclude arbitrary other elements of $BG[k_v]$ at all places. For the special large opens from Example \ref{ex:restricted_ramification_type}, this corresponds to counting number fields with restricted ramification type.


By \cite[Prop.~9.16]{LS24}, to prove Conjecture \ref{conj:restricted_ramification} it actually suffices to consider the case where $W \subseteq BG(\Adele_k)_{\mathcal{M}(H)}$ is a basic open, in the sense of Definition \ref{def:basic_open}. This is proved via a standard topological argument of approximating any large open by basic opens. A similar argument also allows one to handle arbitrary continuity sets.

Here are some known cases of Conjecture \ref{conj:restricted_ramification}:
\begin{enumerate}
	\item $G = S_3,S_4,S_5$, discriminant height
	\cite[Thm.~1.1]{Bha14}, \cite[Thm.~2]{BSW}.
	Here the problem corresponds to counting degree $n$ fields with squarefree discriminant.
	\item $G = D_4$, $H$ the Artin conductor of the standard $2$-dimensional representation, where $\mathcal{M}(H) = \{(1,3), (1,2)(3,4)\}
	\subset D_4 \subset S_4$ the conjugacy classes of reflections
	\cite[Thm.~3]{ASVW21}.
	\item $G$ abelian with $H$  arbitrary;
	see \cite{Tav25} for the case of trivial Galois action and \cite{LT26}
	for non-trivial Galois action.
\end{enumerate}

There are not so many results in the literature on Conjecture \ref{conj:restricted_ramification}, most likely because the counting problem had not been previously rigorously formulated. It looks ripe for attacking in many new cases. In analogy with Manin's conjecture, Conjecture \ref{conj:restricted_ramification} should be interpreted as a problem concerning counting some version of integral points on $BG$.

\begin{remark}
	Let $\mathcal{C} \subseteq \mathcal{C}_{G}$ be Galois invariant such that 
	$\{ c \in \mathcal{C}^* : w(c) \text{ is minimal} \}$ generates $G$. 
	Conjecture \ref{conj:restricted_ramification} also allows one to obtain predictions for the a 
	priori more general problem of counting elements $\varphi \in BG[k]$ whose ramification type 
	is restricted to lie in $\mathcal{C}$, i.e.~such that $\varphi \in BG[\O_v]_{\mathcal{C}}$ for all
	$v$ outside some fixed set of places $S$. This follows by modifying the adelic height $H$ and 
	weight function such that its value on $\prod_{v \in S} BG[k_v] \prod_{v \notin S} 
	BG[\O_v]_{\mathcal{C}}$ is unchanged but now $\mathcal{M}(H) \subseteq \mathcal{C}$.
\end{remark}

\subsection{Strong approximation}

We emphasise that the following is an immediate consequence of Conjecture \ref{conj:restricted_ramification}, since Conjecture \ref{conj:restricted_ramification} implies that there is at least one cocycle in any basic open providing it satisfies the Brauer--Manin condition.

\begin{conjecture}[Strong Grunwald conjecture] \label{conj:dense}
	Let $\mathcal{C} \subset \mathcal{C}_G$ be Galois invariant and generate $G$. Then 
	$BG[k]$ is dense in $BG(\Adele_k)_{\mathcal{C}}^{{\Br^{\mathrm{e}}_{\mathcal{C}} BG}}$.
\end{conjecture}

This can be viewed as a qualitative version of Conjecture \ref{conj:restricted_ramification} and we expect to be of independent interest. It is a version of the Grunwald--Wang theorem for non-abelian groups and with restricted ramification type imposed. It is a very strong version of the inverse Galois problem.

On the Manin's conjecture side, for $\mathcal{C} = \mathcal{C}_G$ it is analogous to Colliot-Th\'{e}l\`{e}ne's conjecture that the Brauer--Manin obstruction is the only one to weak approximation on rationally connected varieties \cite[p.~174]{CT03}. For general $\mathcal{C}$, Conjecture \ref{conj:dense} should be interpreted as some version of Colliot-Th\'{e}l\`{e}ne's conjecture but for strong approximation on a non-proper variety.

We emphasise that the hypothesis that $\mathcal{C}$ generates $G$ is required in general in Conjecture \ref{conj:dense}. The following example corresponds to taking $\mathcal{C}$ empty; non-empty examples can be obtained by taking a suitable product of groups.

\begin{lemma} \label{lem:not_dense}
	Let $G$ be a non-trivial finite group with trivial abelianisation. Then $BG[k]$
	does not have dense image in $BG[\Adele_k]^{\Bre BG}.$
\end{lemma}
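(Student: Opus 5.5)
Here $BG[\Adele_k]$ means the adelic space with $\mathcal{C}=\emptyset$, so almost every component is unramified. The plan is to find a nonempty open subset of this space that contains no global points but is still entirely inside the Brauer--Manin set. Since $G$ is perfect, we should not expect any Brauer--Manin obstruction to be visible on unramified components.

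\emph{Step 1: global points are unramified everywhere.} Any $\varphi\in BG[k]$ lying in $BG[\Adele_k]$ is unramified at all but finitely many places. Since $G$ has trivial Galois action (implicit here, as $G$ is an abstract group), $\varphi$ corresponds to a homomorphism $\Gamma_k\to G$ with image some subgroup $G'$. It cuts out a Galois extension $L/k$ with group $G'$, ramified at only finitely many places.

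\emph{Step 2: choose the open set.} Let $S_\infty$ be the archimedean places. Take $W$ to be the basic open defined by: $W_v=\{\text{trivial cocycle}\}$ at every finite bad place and at every archimedean place; $W_{v_0}=\{\psi\}$ at one good place $v_0$, where $\psi$ is a nontrivial unramified cocycle given by Lemma \ref{lem:unramified_cocycle}; and $W_v=BG(\O_v)$ at all other places. A global point in $W$ is then unramified everywhere, so $L/k$ is unramified at all finite places. At infinity the trivial cocycle means the real places stay real. Hence $L\subseteq$ the narrow Hilbert class field $k^{+}_{\mathrm{cl}}$ of $k$, so $\Gal(L/k)=G'$ is abelian. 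The image of $\varphi$ is therefore abelian, so $\varphi$ factors through $G^{\mathrm{ab}}$, which is trivial because $G$ is perfect. Thus $\varphi$ is trivial, contradicting $\varphi_{v_0}=\psi\neq 1$. So $W\cap BG[k]=\emptyset$.

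\emph{Step 3: $W$ meets the Brauer--Manin set.} Take $x=(x_v)$ with $x_{v_0}=\psi$ and $x_v$ trivial elsewhere. For each $\beta\in\Bre BG$, the value $\beta(\text{trivial})=0$, because the trivial cocycle lifts to $G_\beta$ (Lemma \ref{lem:evalutation_map_trivial_embedding_problem}). So it remains to show $\inv_{v_0}\beta(\psi)=0$. Let $n$ be the order of the kernel of a finite representative of $\beta$. Choose $v_0$ good for $G_\beta$ for every $\beta$. This is possible because $\Bre BG$ is finite: it sits inside an extension of the finite group $\H^2(G,\Q/\Z)$ by $\H^1(k,\dual G)=0$, and $\dual G=\Hom(G,\mu_\infty)=0$ since $G$ is perfect. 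Then, exactly as in Lemma \ref{lem:trivial_on_O_v}, $\psi(\Frob)$ lifts to $G_\beta$ by Lemma \ref{lem:unramified_cocycle}, so $\beta(\psi)=0$. Hence $x\in W^{\Br}$, and $W^{\Br}$ is a nonempty open set with no global points, which proves the lemma.

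\emph{Main obstacle.} The delicate point is the finiteness of $\Bre BG$, i.e.\ choosing a single $v_0$ that is good for all $G_\beta$ simultaneously. The plan is to argue via Lemma \ref{lem:algebraic_Brauer_H1_Pic} ($\Br^{\mathrm{e}}_1BG\cong\H^1(k,\dual G)=0$) together with the injection of $\Bre BG/\Br^{\mathrm{e}}_1 BG$ into $\H^2(G,\Q/\Z)$. If this route fails, there is a fallback that avoids finiteness: for each $x\in W$, the pairing with any individual $\beta$ vanishes at every place except finitely many, and a better choice of $W$ (all $W_v$ singletons except finitely many unramified-trivial factors) makes every point of $W$ unobstructed directly.
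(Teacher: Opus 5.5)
\textbf{There is a genuine gap in Step 2.} You infer: ``$L/k$ is unramified at all finite places and the real places stay real, hence $L$ lies in the (narrow) Hilbert class field, hence $\Gal(L/k)$ is abelian.'' This is false. The Hilbert class field is only the maximal unramified \emph{abelian} extension of $k$, and unramified extensions need not be abelian or even solvable. For example, the Galois closure of an $S_5$-quintic field with squarefree discriminant $d$ is an $A_5$-extension of $\Q(\sqrt{d})$ unramified at all finite places. So perfectness of $G$ gives you no means of forcing $\varphi$ to be trivial, and your claim $W\cap BG[k]=\emptyset$ is unjustified for general $k$. Over $k=\Q$ the claim happens to hold, but because of Minkowski's theorem, not your argument. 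Since your contradiction rests entirely on this emptiness, the proof does not go through as written. The part you flagged as the main obstacle is fine: finiteness of $\Bre BG$ via $\Br^{\mathrm{e}}_1 BG\cong\H^1(k,\dual{G})=0$ and the injection into $\H^2(G,\Q/\Z)$ is exactly how the paper uses perfectness.

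\textbf{The missing ingredient is Hermite--Minkowski.} You cannot show there are \emph{no} global points in such an open set, but you can show there are only \emph{finitely many}. Any global point in $\prod_{v\in S}W_v\times\prod_{v\notin S}BG(\O_v)$ gives a $G$-extension unramified outside $S$.

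This is the paper's argument. Finiteness of $\Bre BG$ and Lemma \ref{lem:trivial_on_O_v} give a finite $S$ with $W:=\prod_{v\in S}\{e\}\times\prod_{v\notin S}BG(\O_v)\subseteq BG[\Adele_k]^{\Bre BG}$. If $BG[k]$ were dense in the Brauer--Manin set, the finite set $BG[k]\cap W$ would be dense in the open set $W$. Finite sets are closed in this Hausdorff space, so $BG[k]\cap W$ would equal $W$, contradicting that $W$ is infinite.

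Your own construction can also be repaired directly. Your point $x$ from Step 3 does lie in the Brauer--Manin set. It is not the image of a global point, since a homomorphism $\Gamma_k\to G$ trivial on the decomposition groups at all $v\neq v_0$ is trivial by Chebotarev. Because your $W$ contains only finitely many global points, Hausdorffness then gives an open neighbourhood of $x$ missing $BG[k]$.
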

\begin{proof}
	We argue by contradiction.
	As the abelianization of $G$ is trivial, it follows from Lemma \ref{lem:algebraic_Brauer_H1_Pic}
	that $\Br^{\mathrm{e}}_1 BG = 0$. Thus by \eqref{seq:Br_extension} the group $\Bre BG$
	is finite. Hence by Theorem \ref{thm:trivial_on_O_v_R} there is a finite set of places $S$
	such that $W:=\prod_{v \in S}\{e\} \times 
	\prod_{v \notin S}BG(\O_v) \subset BG[\Adele_k]^{\Bre BG}.$ As $W$ is open
	we deduce that $BG[k] \cap W$ is dense in $W$. However $BG[k] \cap W$ is finite,
	since the elements may be represented by $G$-extensions of
	$k$ which are unramified outside of $S$, of which there are only finitely many by
	Hermite--Minkowski. As $W$ is infinite this gives a contradiction.
\end{proof}

\subsection{Bhargava's heuristic} \label{sec:Bhargava}
In an influential paper, Bhargava \cite{Bha07} put forward a series of conjectures on counting $S_n$ degree $n$ fields with local conditions imposed. He did this by trying to approximate the Dirichlet series for such number fields by an Euler product. Here we put forward a more general and corrected version of these heuristics in the form of a precise conjecture. Our changes are as follows:
\begin{enumerate}
	\item Rescaling of the zeta functions to obtain the correct leading constant. Bhargava specifically asks for the correct scaling in  \cite[(8.6)]{Bha07}.
	\item Removing a possible thin set of fields to avoid Kl\"{u}ners' style counter-examples.
	\item Taking a sum over elements of the corresponding Brauer group to account for Brauer--Manin obstructions, as in the Grunwald--Wang theorem.
	\item Restrict to balanced height functions only.
\end{enumerate}
	
This leads to the following.
Let $k$ be a number field, $G$ a finite $\Gamma_k$-group and $H$ a big balanced height on $BG$. Let $W \subseteq \prod_v BG[k_v]$ be a basic open set as in Definition \ref{def:basic_open}. We consider the height zeta function
$$\mathbf{Z}_{\Omega,W}(H,s) = \frac{1}{|Z(G)^{\Gamma_k}|}\sum_{\substack{\varphi \in BG[k] \\ \varphi \in W \\ \varphi \notin \Omega}} \frac{1}{H(\varphi)^s}$$ 
where $\Omega \subset BG[k]$ is a suitably large thin set. Height zeta functions first appeared in Manin's conjecture in the original paper \cite{FMT89}.  


\begin{conjecture}[Brauer spectral expansion] \label{conj:Brauer_spectral}
	There exists a meromorphic function $F_{H,W}(s)$ on $\re s \geq a(H)$ whose only pole lies at $s = a(H)$ where it has order less than $b(k,H)$, such that
	$$\mathbf{Z}_{\Omega,W}(H,s) = \frac{1}{\#\dual{G}^{\Gamma_k}} \sum_{\beta \in \Br^{\mathrm{e}}_{\mathcal{M}(H)} BG}
	\prod_{v} \left(\sum_{\varphi_v \in W_v} \frac{e^{2 \pi i \cdot \inv_v(\beta(\varphi_v))}}{|\Aut{\varphi_v}| H_v(\varphi_v)^{s}}\right) + F_{H,W}(s),$$
	on $\re > a(H).$ 
\end{conjecture}


Conjecture \ref{conj:Brauer_spectral} is inspired by a version of the Poisson summation formula for $BG$ when $G$ is abelian. In the abelian case it will be proven in \cite{LT26}. Hence why we view it as a spectral expansion. The Euler factors which appear here are modelled on the Brauer transforms from Definition \ref{def:Brauer_transform}.

In practice when counting, one obtains a sum over some characters rather than a sum over Brauer group elements. To put this into the setting of Conjecture \ref{conj:Brauer_spectral} one needs to interpret these as Brauer group elements. This philosophy appears in many places in the literature, for example in \cite[Thm.~4.5]{Lou18}, and nicely expounded upon in \cite[\S1.3]{CKR25}.

The main result in this section is that Conjecture \ref{conj:Brauer_spectral} is compatible with our counting conjectures.

\begin{theorem} \label{thm:Bhargava_heuristic}
	Conjecture \ref{conj:Brauer_spectral} implies Conjectures \ref{conj:balanced} and \ref{conj:equi}.
\end{theorem}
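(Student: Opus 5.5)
The strategy is a Tauberian argument applied to the right-hand side of Conjecture \ref{conj:Brauer_spectral}. Fix a basic open $W=\prod_{v\in S}W_v\prod_{v\notin S}BG[k_v]$, as in Example \ref{ex:basic_open}. For each $\beta\in\Br^{\mathrm{e}}_{\mathcal{M}(H)}BG$, write the Euler product term as
$$E_\beta(s)=\prod_v \hat{\tau}_{v}(\beta,W_v;s), \qquad \hat{\tau}_{v}(\beta,W_v;s):=\sum_{\varphi_v\in W_v}\frac{e^{2\pi i\inv_v(\beta(\varphi_v))}}{|\Aut\varphi_v|H_v(\varphi_v)^{s}}.$$
I would show that each $E_\beta(s)$ equals $\L(\mathcal{M}(H),s/a(H))\cdot G_\beta(s)$, where $G_\beta$ is holomorphic on $\re s\ge a(H)$ away from a neighbourhood-of-the-point issue. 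Then the whole right-hand side has a pole of order $b(k,H)$ at $s=a(H)$ with leading coefficient
$$\frac{a(H)^{b(k,H)}}{\#\dual{G}^{\Gamma_k}}\sum_\beta \hat{\tau}_H(\beta;W),$$
and by Lemma \ref{lem:finite_sum_Euler_products} this equals $\frac{a(H)^{b(k,H)}|\Br^{\mathrm{e}}_{\mathcal{M}(H)}BG|}{\#\dual{G}^{\Gamma_k}}\tau_H(W^{\Br})$. Since $F_{H,W}$ has a pole of order strictly less than $b(k,H)$, it does not affect this leading coefficient. A Tauberian theorem (e.g.~\cite[Thm.~A]{PTBZ25}, as in Lemma \ref{lem:mu_f}) then gives
$$\frac{a(H)^{b(k,H)}\,(\cdots)}{a(H)\,(b(k,H)-1)!}\,B^{a(H)}(\log B)^{b(k,H)-1},$$
which is exactly $c(k,G,H)$ with $\tau_H(W^{\Br})$ in place of the full volume. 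Taking $W=\prod_v BG[k_v]$ yields Conjecture \ref{conj:balanced}. For general basic opens $W$, the ratio of the two asymptotics gives Conjecture \ref{conj:equi}, and Example \ref{ex:basic_open} together with \cite[Prop.~9.11]{LS24} reduces arbitrary continuity sets to this case. One subtlety: the thin sets $\Omega$ for different $W$ must be chosen compatibly. I would handle this with \cite[Thm.~9.13]{LS24}, enlarging to a common thin set.

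The local analysis comes next. At good places $v\notin S$ with $v$ outside a finite set, sort $\varphi_v$ by ramification type as in the proof of Theorem \ref{thm:mass_formula}. By Theorem \ref{thm:trivial_on_O_v_R}, $\beta$ evaluates trivially on $BG(\O_v)_{\mathcal{M}(H)}$ for almost all $v$, so
$$\hat{\tau}_v(\beta,BG[k_v];s)=1+\#\mathcal{M}(H)^{\Gamma_{k_v}}q_v^{-s/a(H)}+O\bigl(q_v^{-s(1/a(H)+1)}\bigr).$$
The error term comes from non-minimal weights, whose weight is at least $a(H)^{-1}+1$ since $w$ is integral, together with the contributions of non-minimal classes under \eqref{eqn:groupoid_cardinality_1}, which bound the number of terms. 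Dividing by $\L_v(\mathcal{M}(H),s/a(H))$ gives $1+O(q_v^{-1-\delta})$ uniformly for $\re s\ge a(H)-\epsilon$, and Lemma \ref{lem:Dedekind_zeta} controls the $\L$-function: its poles on $\re s\ge a(H)$ come only from the Dedekind zeta factors at $s=a(H)$. The finitely many remaining factors are finite sums, hence entire. Evaluating $G_\beta(a(H))$ reproduces $\L^*(\mathcal{M}(H),1)\prod_v\lambda_v^{-1}\hat\tau_{H,v}(\beta,W_v)$, rescaled by $a(H)^{b(k,H)}$ from the variable change $s\mapsto s/a(H)$. That is, it reproduces $\hat\tau_H(\beta;W)$.

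The main obstacle is the claim that the Brauer twists do not create extra poles, or cancel the main one, in a way that breaks the count. Theorem \ref{thm:trivial_on_O_v_R} gives triviality of $\beta$ on minimally ramified classes only for all but finitely many $v$, so each $E_\beta$ has the same pole order, possibly with zero leading coefficient (for instance when $\hat\tau_H(\beta;W)=0$). That is harmless because only the sum matters. I also need the sum $\tau_H(W^{\Br})$ to be positive, so that the main term genuinely has pole order $b(k,H)$. This uses Lemma \ref{lem:large_open} when $W^{\Br}\neq\emptyset$. When $W^{\Br}=\emptyset$, the main term vanishes and the count is $o\bigl(B^{a(H)}(\log B)^{b(k,H)-1}\bigr)$, which is still consistent with Conjecture \ref{conj:equi} since that ratio is then $0$. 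Finally, the Tauberian theorem needs nonnegative coefficients and a pole-only condition on the whole line $\re s=a(H)$. Nonnegativity holds for $\mathbf{Z}_{\Omega,W}$ itself, and the meromorphy hypothesis on $F_{H,W}$ supplies the boundary behaviour. I expect only the routine justification of continuity up to the boundary to need care there.
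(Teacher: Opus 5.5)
Your proposal is correct. Its global structure is the paper's:
\begin{itemize}
\item apply a Tauberian theorem to the spectral expansion, noting that $F_{H,W}$ cannot affect the coefficient of $(s-a(H))^{-b(k,H)}$;
\item factor $\prod_{c\in\mathcal{M}(H)/\Gamma_k}\zeta_{k(c)}(s/a(H))$ out of each $\L_{H,W}(\beta,s)$;
\item evaluate the remaining absolutely convergent Euler product at $s=a(H)$ to obtain $a(H)^{b(k,H)}\hat{\tau}_H(\beta;W)$;
\item sum over $\beta$ using Lemma \ref{lem:finite_sum_Euler_products}.
\end{itemize}

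The genuine difference is the local input. The paper quotes Theorem \ref{thm:Brauer_zeta}(3). That result is proved via the Brauer mass formula \cite[Thm.~8.23]{LS24} and the residue analysis, including the cancellation \cite[Lem.~8.22]{LS24} for transcendental residues. You instead derive the factorisation from three ingredients:
\begin{itemize}
\item the easy direction of Theorem \ref{thm:trivial_on_O_v_R}, so that $e^{2\pi i\inv_v\beta(\varphi_v)}=1$ on $BG(\O_v)_{\mathcal{M}(H)}$ for almost all $v$;
\item the fibre identity \eqref{eqn:groupoid_cardinality_1};
\item integrality of $w$, which bounds the non-minimal ramification types by $O\bigl(q_v^{-(a(H)^{-1}+1)\re s}\bigr)$.
\end{itemize}
This is legitimate because only $\beta\in\Br^{\mathrm{e}}_{\mathcal{M}(H)}BG$ occur in Conjecture \ref{conj:Brauer_spectral}, and it needs less machinery. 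What the paper's route buys is parts (1)--(2) of Theorem \ref{thm:Brauer_zeta}, i.e.\ control of arbitrary $\beta\in\Bre BG$, which this theorem does not need.

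Two small points. First, Lemma \ref{lem:large_open} does not apply to $W^{\Br}$ directly, since it is not a large open. You should first note that it is open, being cut out by finitely many continuous pairings, so when non-empty it contains a non-empty basic open. Second, the paper tacitly takes the thin set $\Omega$ in Conjecture \ref{conj:Brauer_spectral} to be independent of $W$. That reading makes the reduction of Conjecture \ref{conj:equi} to basic opens immediate and is cleaner than enlarging thin sets via \cite[Thm.~9.13]{LS24}. Approximating a general continuity set uses infinitely many basic opens, so $W$-dependent thin sets cannot simply be merged by finite unions.
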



To approach this we begin by analysing in detail the Euler products which appear.

\begin{definition}
For $\beta \in \Bre BG$, we call the expression
$$\L_{H,W}(\beta,s):=\prod_{v} \left(\sum_{\varphi_v \in W_v} \frac{e^{2 \pi i \cdot \inv_v(\beta(\varphi_v))}}{|\Aut{\varphi_v}| H_v(\varphi_v)^{s}}\right), \quad \re s > a(H), $$
the \emph{$\L$-function of $\beta$}.
\end{definition}
We call this an $\L$-function for notational convenience to differentiate between various other zeta functions which will appear; we do not claim that it is an element of the Selberg class. Though we show later that it can be well-approximated by a product of Artin $\L$-functions (see Theorem \ref{thm:Brauer_zeta} and its proof).

Since $W$ is a basic open we have $W_v = BG[k_v]$ for all but finitely many $v$, hence the local Euler factors only depend on $\beta$ at all but finitely many places. The finitely many places where $W_v \neq BG[k_v]$ will not affect the overall analytic behaviour.

\begin{example} \label{ex:product_zeta}
	Our mass formula (Theorem \ref{thm:mass_formula}) shows without difficulty that
	$$\L_{H,W}(0,s) = \prod_{c \in \mathcal{C}_G^*/\Gamma_k} \zeta_{k(c)}(w(c)s) G_{H,W}(s)$$
	where $G_{H,W}(s)$ is given by an absolutely convergent Euler product on $\re s > a(H) - \varepsilon$
	for some $\varepsilon > 0$ which is non-zero at $s = a(H)$
	(this is a minor variant of Lemma~\ref{lem:Dedekind_zeta}). 
It follows immediately that $\L_{H,W}(0,s)$ has a pole of order $b(k,H)$ at $s = a(H)$,
	as expected from Conjecture \ref{conj:balanced}, and that the general Brauer $\L$-functions
	converge absolutely for $\re s > a(H)$.
\end{example}

We next analyse in detail the analytic properties of $\L_{H,W}(\beta,s)$. 

\begin{theorem} \label{thm:Brauer_zeta}
	Let $\beta \in \Bre BG$. 
	\begin{enumerate}
		\item The $\L$-function $\L_{H,W}(\beta,s)$ is meromorphic on $\re s > a(H) - \varepsilon$ for some $\varepsilon >0$ except for a possible pole at $s = a(H)$ of order at most $b(k,H)$. 
		\item If $\beta \notin \Br^{\mathrm{e}}_{\mathcal{M}(H)} BG$ then the order of the pole is less than $b(k,H)$.
		\item If $\beta \in \Br^{\mathrm{e}}_{\mathcal{M}(H)} BG$ then 
	$$\L_{H,W}(\beta,s) = \prod_{c \in \mathcal{C}_G^*/\Gamma_k} \zeta_{k(c)}(w(c) s) G_{H,W}(\beta,s)$$
	for some function $G(\beta,s)$ which is given by an absolutely convergent Euler product on $\re s \geq a(H) - \varepsilon$ which is non-zero at $s = a(H)$ when $\beta = 0$.
	\end{enumerate}
\end{theorem}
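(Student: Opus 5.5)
Since only finitely many places have $W_v \neq BG[k_v]$ (and these contribute entire nonvanishing factors for $\re s > a(H)-\varepsilon$, at least after possibly enlarging $\varepsilon$ we only need them holomorphic), and also finitely many bad places and places where $\beta$ is ramified (i.e.\ $G_\beta$ is ramified, or $v \mid n$ for a representative with kernel $\mu_n$), the whole question reduces to computing the local factor $\L_v(\beta,s)$ at a good place $v$ for which $G_\beta$ is unramified and tame and for which \eqref{eqn:tame_height} holds. I would sort $BG[k_v]$ by ramification type as in the proof of Theorem~\ref{thm:mass_formula}, giving
\[
\L_v(\beta,s) = 1 + \sum_{c \in \mathcal{C}_G^{*\,\Gamma_{k_v}}} q_v^{-w(c)s}\, S_v(\beta,c), \qquad S_v(\beta,c):=\sum_{\substack{\varphi_v \in BG[k_v]\\ \rho_{G,v}(\varphi_v)=c}} \frac{e^{2\pi i \inv_v\beta(\varphi_v)}}{|\Aut\varphi_v|},
\]
where unramified $\varphi_v$ contribute $1$ because $\beta$ vanishes on $BG(\O_v)$ (Lemma~\ref{lem:trivial_on_O_v}) and their groupoid mass is $1$. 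By \eqref{eqn:groupoid_cardinality_1} we have $|S_v(\beta,c)| \le 1$ with equality for $\beta = 0$. Terms with $w(c) > a(H)^{-1}$ contribute $O(q_v^{-1-\delta})$ uniformly for $\re s > a(H)-\varepsilon$, so only $c \in \mathcal{M}(H)$ matters, and the Euler product is $\prod_v (1 + \sum_{c \in \mathcal{M}(H)^{\Gamma_{k_v}}} S_v(\beta,c) q_v^{-w(c)s})$ up to an absolutely convergent factor.

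The key step is to identify $S_v(\beta,c)$ for minimal $c$ via the residue of Definition~\ref{def:residue}. Working in $[G(-1)/G](\F_v)$ by Theorem~\ref{thm:hensel}, the evaluation $\beta(\gamma,F)$ is trivial precisely when the pair lifts to $G_\beta$, and the argument in the proof sketch of Theorem~\ref{thm:trivial_on_O_v_R} generalises as follows. If $\partial_{\mathcal{C}}(\beta)$ is algebraic for the orbit $\mathcal{C}\ni c$, choose a geometric marking $A$ over $c$; the obstruction to lifting $(\gamma,F)$ is the element $q \in \Z/n\Z$ with $F_\beta\Frob_v(A)F_\beta^{-1} = A + q$, i.e.\ the value of the residue character $\partial_{\mathcal{C}}(\beta) \in \H^1(k(c),\Q/\Z)$ at the Frobenius of a place of $k(c)$ above $v$. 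One should then get $S_v(\beta,c) = \chi_{\beta,c}(\Frob_w)$ for a finite-order Hecke/Dirichlet character $\chi_{\beta,c}$ of $k(c)$ (up to a fixed root of unity normalisation coming from the invariant map), summed over the degree-one places $w \mid v$ of $k(c)$ corresponding to the $\Gamma_{k_v}$-fixed $c$ in the orbit. In the transcendental case I expect no geometric marking, and the lifts are equidistributed among translates, so $S_v(\beta,c) = 0$. Making this computation rigorous for non-abelian $G$, where I would rely on the gerbe/groupoid counting of \cite[Lem.~8.8]{LS24} twisted by the $\Z/n\Z$-torsor structure of Lemma~\ref{lem:Q/Z-torsor}, is the main obstacle.

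Granting this, comparison with Lemma~\ref{lem:Dedekind_zeta} gives
\[
\L_{H,W}(\beta,s) = \prod_{\mathcal{C} \in \mathcal{M}(H)/\Gamma_k} \L(\chi_{\beta,\mathcal{C}}, a(H)^{-1}s)\cdot \widetilde G(\beta,s),
\]
with $\L(\chi,\cdot)$ the Hecke $\L$-function over $k(c)$ (or $1$ if the residue is transcendental) and $\widetilde G$ absolutely convergent on $\re s > a(H)-\varepsilon$. Hecke $\L$-functions of nontrivial characters are holomorphic and nonvanishing on $\re=1$, while trivial ones are Dedekind zeta functions with a simple pole. This proves (1). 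For (2), by Lemma~\ref{lem:residue} the condition $\beta\notin\Br^{\mathrm{e}}_{\mathcal{M}(H)}BG$ means some orbit has nontrivial or transcendental residue, which removes at least one pole. For (3), all residues on $\mathcal{M}(H)$ are trivial, so the $\mathcal{M}(H)$ factors are $\zeta_{k(c)}$; the non-minimal $\zeta_{k(c)}(w(c)s)$ are holomorphic and nonzero there and can be inserted freely into $G_{H,W}(\beta,s)$. Nonvanishing at $\beta=0$ is Example~\ref{ex:product_zeta}.
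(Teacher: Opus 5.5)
Your proposal is correct and follows essentially the same route as the paper. You reduce to good places and express each local factor through the residue of $\beta$ along each Galois orbit: the factor vanishes when the residue is transcendental, and otherwise equals the residue character evaluated at the Frobenius of the corresponding split place of $k(c)$. You then match the Euler product with a product of Hecke $\L$-functions over the fields $k(c)$ times an absolutely convergent factor, and read off (2) and (3) from Lemma~\ref{lem:residue}. The local computation you flag as the main obstacle is not proved in the paper either; it is imported from \cite[Thm.~8.23, Lem.~8.22]{LS24}, and your sketch of it via Theorem~\ref{thm:hensel} and the $\Z/n\Z$-torsor structure of Lemma~\ref{lem:Q/Z-torsor} is the right mechanism.
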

\begin{proof}
    We use a slightly more advanced version of our mass formula (Theorem \ref{thm:mass_formula}) for calculating these Brauer integrals, which we proved in \cite[Thm.~$8.23$]{LS24}. For all but finitely many tame places $v$ this reads:
	\begin{equation} \label{eqn:Brauer_mass_formula}
     \L_{H,W}(\beta,s)
     = 
 	\prod_{v}  \sum_{c \in \mathcal{C}_G^{\Gamma_{k_v}}} \frac{\chi_v(c)}{q_v^{w(c)s}}
	\end{equation}
	Here for fixed $c$ either $\chi_v(c)$ is zero or $\chi_v(c)$ is a frobenian function taking values in the roots of unity (it should be viewed as a kind of character sum).
	
	These functions $\chi_v(c)$ are defined in terms of the residue $\partial_{c}(\beta)$ of the Brauer group element, as in \S\ref{sec:residues}. In the most complicated case when the residue is \textit{transcendental}, we fortunately have $\chi_v(c) = 0$ so it does not contribute to the main singularity; essentially in this case $\chi_v(c)$ is a character sum which exhibits complete cancellation (this is our \cite[Lem.~8.22]{LS24}).
	
	When the residue is \emph{algebraic}, by Definition \ref{def:residue} it lies in the Galois cohomology group $\H^1(k(c), \Q/\Z)$, thus uniquely determines a Galois character $\psi_c: \Gamma_{k(c)} \to \Q/\Z$, which we view as a $1$-dimensional Artin representation via the embedding $\Q/\Z \to \C^\times, x \mapsto e^{2 \pi i x}$.
	
	Let $C$ be the $\Gamma_k$-Galois orbit of $c$. The Galois correspondence gives us a natural bijection between $C$ and $\Hom_k(k(c), k^{\sep})$. Composing with our fixed inclusion $k^{\sep} \subset k_v^{\sep}$ makes $c$ correspond to an embedding $k(c) \subset k_v^{\sep}$. The assumption that $c \in \mathcal{C}_G^{\Gamma_{k_v}}$ implies that the embedding factors through $k(c) \subset k_v$, in other words $c$ defines a split place $w$ of $k(c)$ lying over $v$. Then we define $\chi_v(c) :=  e^{2 \pi i \psi_c(\Frob_w)}$.
	
	Therefore, the formula \eqref{eqn:Brauer_mass_formula} implies that
	$$\L_{H,W}(\beta,s) = \prod_{\substack{ c \in \mathcal{C}_G^*/\Gamma_k \\ \partial_c(\beta) \textrm{ algebraic}}} \L_{k(c)}(\psi_c,w(c)s) G_{c,\beta,H,W}(s).$$
	Here $\psi_c$ is the Artin representation over $k(c)$ corresponding to the residue. This proves meromorphicity. Moreover it shows that the pole at $s=a(H)$ has order at most $b(k,H)$, hence shows (1).
	
	To determine which $\beta$ can contribute to the leading singularity, we note that this can happen if and only if the character $\psi_c$, hence the residue, is trivial for all $c \in \mathcal{M}(H)$. However by Lemma \ref{lem:residue} this happens if and only if $\beta \in  \Br^{\mathrm{e}}_{\mathcal{M}(H)} BG$. This proves (2) and (3)
\end{proof}

\begin{remark}
Theorem \ref{thm:Brauer_zeta} implies that if a version of Conjecture \ref{conj:Brauer_spectral} holds with a possible slightly bigger Brauer group, then Conjecture \ref{conj:Brauer_spectral} still holds. This highlights again the arithmetic significance of the partially unramified Brauer group elements.
\end{remark}

\begin{proof}[Proof of Theorem \ref{thm:Bhargava_heuristic}]
	To keep notation light, we denote the height zeta function by $\mathbf{Z}(s)$. 
	To prove Conjectures \ref{conj:balanced} and \ref{conj:equi}, 
	it suffices to obtain an asymptotic formula with the correct leading constant for $W$.
	By Conjecture \ref{conj:Brauer_spectral} and Theorem \ref{thm:Brauer_zeta} the height zeta function 
	admits a meromorphic continuation to $\re s \geq a(H)$ with 
	a pole of order at most $b(k,H)$. We do not claim that there is always a pole of exact 
	order $b(k,H)$ as there may be cancellation in the sum in Conjecture \ref{conj:Brauer_spectral};
	indeed this can happen if the local conditions imposed by $W$ are not realisable over $k$, i.e.~if
	there is a Brauer--Manin obstruction.
	
	In any case we apply a Tauberian theorem (e.g.~\cite[Thm.~A]{PTBZ25}) to deduce that
	\begin{align*}
	&\frac{1}{|Z(G)^{\Gamma_k}|}\#\{ \varphi \in BG[k] \setminus \Omega :H(\varphi)  \leq B, \varphi \in W_v \text{ for all }v\} \\
	& = (c + o(1)) B^{a(H)}(\log B)^{b(k,H)-1},
	\end{align*}
	where $$c= \frac{1}{a(H) ( b(k,H) -1)!} \lim_{s \to a(H)} (s - a(H))^{b(k,H)} \mathbf{Z}(s).$$
	It thus remains to show that this leading constant is as in Conjectures \ref{conj:balanced}
	and \ref{conj:equi}. Using Conjecture \ref{conj:Brauer_spectral} it suffices to show that
	$$\lim_{s \to a(H)} (s - a(H))^{b(k,H)} \mathbf{Z}(s) = 
	\frac{a(H)^{b(k,H)}|\Br^{\mathrm{e}}_{\mathcal{M}(H)} BG| \cdot\tau_H( W^{\Br})}{\# \dual{G}^{\Gamma_k}}.$$
	The factor $\# \dual{G}^{\Gamma_k}$ comes from the sum in Conjecture \ref{conj:Brauer_spectral}.  We may use Lemma \ref{lem:finite_sum_Euler_products} to write the Tamagawa measure as a finite sum of Euler products. Hence it suffices to show that for all $\beta \in \Br^{\mathrm{e}}_{\mathcal{M}(H)} BG$ we have
	\begin{equation} \label{eqn:limit_s_a}
	\lim_{s \to a(H)} (s - a(H))^{b(k,H)} \L_{H,W}(\beta,s)= a(H)^{b(k,H)}\hat{\tau}_H(\beta ; W),
	\end{equation}
	as in Definition \ref{def:Brauer_transform}.
	However, write
	$$
	\L_{H,W}(\beta,s) = \prod_{c \in \mathcal{M}(H)/\Gamma_k}\zeta_{k(c)}(a(H)^{-1} s)
	\times \L_{H,W}(\beta,s)\prod_{c \in \mathcal{M}(H)/\Gamma_k}\zeta_{k(c)}(a(H)^{-1} s)^{-1}.
	$$
	By Theorem \ref{thm:Brauer_zeta} the second factor is given by an absolutely
	convergent Euler product at $s = a(H)$. Using $\lim_{s \to a} (s - a)\zeta_k(a^{-1}s) = a\zeta_k^*(1)$, we find that the limit in \eqref{eqn:limit_s_a} equals
	\begin{align*}
	a(H)^{b(k,H)} \prod_{c \in \mathcal{M}(H)/\Gamma_k}\zeta_{k(c)}^*(1)
	\prod_v\left( \L_{H,W,v}(\beta, a(H))\prod_{c \in \mathcal{M}(H)/\Gamma_k}\zeta_{k(c),v}(1)^{-1}\right).
	\end{align*}
	Unravelling the definitions and using Lemma \ref{lem:Dedekind_zeta} proves \eqref{eqn:limit_s_a},
	as required.
\end{proof}



\section{Multi-heights} \label{sec:multi_heights}

\subsection{Definition}
Multi-heights are multi-variate height functions which provide a way to treat all heights simultaneously. They were systematically introduced into the study of Manin's conjecture by Peyre \cite[\S 4]{Pey21}, though already appear in earlier works on Manin's conjecture (see \S \ref{sec:multi_height_zeta}), and were subsequently introduced by Ellenberg--Venkatesh \cite[\S4.2]{EV05} and Gundlach \cite[\S 2]{Gun22} in the context of Malle's conjecture. 

We explain how to define multi-heights on $BG$. For each Galois orbit $\mathcal{C} \subset \mathcal{C}_G^*$, let  $w_{\mathcal{C}}$ be the weight function given by the indicator function of $\mathcal{C}$. Let $H_{\mathcal{C}}$ be a choice
of adelic height on $w_{\mathcal{C}}$. The weight functions $w_{\mathcal{C}}$ clearly give a $\Z$-basis for the space of all weight functions, thus $\prod_{\mathcal{C} \in \mathcal{C}_G^*/\Gamma_k} H_{\mathcal{C}}^{a_\mathcal{C}}$ gives a complete system of heights on every weight function for $a_{\mathcal{C}} \in \Z^{\mathcal{C}_G^*/\Gamma_k}$. We denote by $\mathbf{H}$ the tuple $(H_{\mathcal{C}})$, which is a multi-height on $BG$ in the sense of Peyre \cite[Def.~4.4]{Pey21}. 


\subsection{Conjectures}
In \cite{Gun22} Gundlach gives some heuristics on the distribution when counting with respect to multi-heights. Given Conjecture \ref{conj:balanced} we are now able to improve these to precise conjectures with a predicted leading constant. We let $H:= \prod_{\mathcal{C} \in \mathcal{C}_G^*/\Gamma_k} H_{\mathcal{C}}$ be the induced anticanonical height function.

\begin{conjecture}
	There exists a thin subset $\Omega \subset BG[k]$ such that
	$$\frac{1}{|Z(G)^{\Gamma_k}|}\#\{ \varphi \in BG[k] \setminus \Omega : H_{\mathcal{C}}(\varphi) \leq B_{\mathcal{C}}\} \sim 
	(b(k,H) - 1)! \cdot c(k,G,H) \prod_{\mathcal{C} \in \mathcal{C}_G^*/\Gamma_k} B_{\mathcal{C}}$$
	as $\min_{\mathcal{C} \in \mathcal{C}_G^*/\Gamma_k} B_{\mathcal{C}} \to \infty$, where
	$c(k,G,H)$ is as in Conjecture \ref{conj:balanced}.
\end{conjecture}

This conjecture has been verified in the case $G =D_4$ by Hansen and Zanoli \cite{HZ25}, provided the differences in the heights is not too large. We can moreover modify the conjecture to work with general boxes as in \cite[Question 4.8]{Pey21}.

\begin{conjecture} \label{conj:Peyre}
	Let $\mathcal{D} \subset \R^{\mathcal{C}_G^*/\Gamma_k}$ be a compact continuity set and $u \in  \R_{> 0}^{\mathcal{C}_G^*/\Gamma_k}$. For a real number $B > 1$ let $\mathcal{D}_B := \mathcal{D} + \log(B) \cdot u$. As $B \to \infty$ we have
	\begin{align*}
	&\frac{1}{|Z(G)^{\Gamma_k}|}\#\{ \varphi \in BG[k] : (\log H_{\mathcal{C}}(\varphi))_{\mathcal{C}} \in \mathcal{D}_B\}  \\ & \sim \nu(\mathcal{D}) (b(k,H) - 1)! c(k,G,H) B^{\sum_{\mathcal{C} \in \mathcal{C}_G^*/\Gamma_k} u_{\mathcal{C}}}
	\end{align*}
	where 
	\[
	\nu(\mathcal{D}) = \int_{\mathcal{D}} e^{\sum_{\mathcal{C} \in \mathcal{C}_G^*/\Gamma_k} x_{\mathcal{C}}} \prod_{\mathcal{C} \in \mathcal{C}_G^*/\Gamma_k} dx_{\mathcal{C}}.
	\]
\end{conjecture}

\begin{remark}\hfill
	\begin{enumerate} 
		\item Unlike Peyre, we do not need to remove a thin subset $\Omega$ in Conjecture \ref{conj:Peyre}, as the compactness of $\mathcal{D}$ ensures that the contribution of any thin subset to the right-hand side is $0$ for any sufficiently large $B$. Indeed, let $\varphi \in Z^1(k, G)$ and $M_{\varphi} \subsetneq G_{\varphi}$ a subgroup inducing a thin subfamily. 
		
		The map $\mathcal{C}_{M_{\varphi}} \to \mathcal{C}_G$ is not surjective by a theorem of Jordan \cite[Thm.~4']{Jor72}. For any $\mathcal{C} \subset \mathcal{C}_G$ not in the image we then have that the height $H_{\mathcal{C}}$ of the elements of the thin subset associated to $M_{\varphi}$ is bounded.
		\item Peyre only considers the above problem when $\mathcal{D}$ is a polyhedron, but this is equivalent to the more general continuity set case. Indeed, it suffices to show the above conjecture for any box $\mathcal{D} = \prod_{\mathcal{C} \in \mathcal{C}_G^*/\Gamma_k} [a_{\mathcal{C}}, b_{\mathcal{C}}]$, since any compact continuity set can be approximated arbitrarily well by a union of boxes.
	\end{enumerate}
\end{remark}


\subsection{Multi-height zeta functions} \label{sec:multi_height_zeta}

A natural way to approach multi-heights is via the \emph{multi-height zeta function}

$$\zeta_{\mathbf{H}}(\mathbf{s}) := \sum_{ \varphi \in BG[k] \setminus \Omega} \frac{1}{\mathbf{H}(\varphi)^{\mathbf{s}}} $$
for a suitable thin set $\Omega$ and a complex variable $\mathbf{s}$ in a suitable domain of $\C^{\mathcal{C}_G^*/\Gamma_k}$ for which the series converges.

Multi-height zeta functions were first used by Batyrev and Tschinkel \cite[\S4]{BT98t} in their proof of Manin's conjecture for toric varieties; they have subsequently appeared in numerous other works \cite{CT02,STBT07, Lou18}  on Manin's conjecture and related problems, particularly via harmonic analysis or spectral theory of automorphic forms. Their use in Malle's conjecture is due to Alberts and Bucur \cite{AB26}.

As in Conjecture \ref{conj:Brauer_spectral}, we expect the analytic properties of this multi-variate zeta function to be controlled by suitable multi-variate Brauer group integrals.


\end{document}